\newtheorem*{theoA}{Theorem A}
\newtheorem*{theoB}{Theorem B}
\newtheorem*{theoC}{Theorem C}
\newtheorem*{theoD}{Theorem D}
\newtheorem*{theoE}{Theorem E}
\newtheorem*{theoF}{Theorem F}
\newtheorem*{theoG}{Theorem G}
\newtheorem*{theoH}{Theorem H}
\newtheorem*{theoI}{Theorem I}
\newtheorem*{theoJ}{Theorem J}
\newtheorem*{theoK}{Theorem K}
\newtheorem{theo}{Theorem}[section]
\newtheorem{lem}{Lemma}[section]
\newtheorem{cor}{Corollary}[section]
\newtheorem{ques}{Question}[section]
\newtheorem{defi}{Definition}[section]
\newtheorem{rem}{Remark}[section]
\newcommand{\ol}{\overline}
\newcommand{\be}{\begin{equation}}
\newcommand{\ee}{\end{equation}}
\newcommand{\beas}{\begin{eqnarray*}}
	\newcommand{\eeas}{\end{eqnarray*}}
\newcommand{\bea}{\begin{eqnarray}}
\newcommand{\eea}{\end{eqnarray}}
\numberwithin{equation}{section}
\begin{document}
\title[\hfilneg \hfil  Uniqueness of entire functions]
{Uniqueness of entire functions concerning differential-difference polynomials sharing small functions}
\author[G. Haldar\hfil
\hfilneg]
{Goutam Haldar}


\address{Goutam Haldar  \newline
Department of Mathematics, Malda College, Rabindra Avenue, Malda, West Bengal 732101, India.}
\email{goutamiit1986@gmail.com, goutamiitm@gmail.com}

\subjclass[2010]{30D35.}
\keywords{ Enitire function, difference operator, small function, weakly weighted sharing, relaxed weighted sharing.}

\maketitle

\begin{abstract}
   In this paper, we investigate the uniqueness problem of difference polynomials $f^{n}(z)P(f(z))L_c(f)$ and $g^{n}(z)P(g(z))L_c(g)$, where $L_c(f)=f(z+c)+c_0f(z)$, $P(z)$ is a polynomial with constant coefficients of degree $m$ sharing a small function with the notions of weakly weighted sharing and relaxed weighted sharing and obtained the corresponding results, which improve and extend some recent results due to Sahoo and Biswas (Tamkang J. Math., \textbf{49}(2), 85--97 (2018)).
   \end{abstract}

\section{\textbf{Introduction}}
Let $f$ and $g$ be two non-constant meromorphic functions defined in the open complex plane $\mathbb{C}$. If for some $a\in\mathbb{C}\cup\{\infty\}$, the zero of  $f-a$ and $g-a$ have the same locations as well as same multiplicities, we say that $f$ and $g$ share the value $a$ CM (counting multiplicities). If we do not consider the multiplicities, then $f$ and $g$ are said to share the value $a$ IM (ignoring multiplicities). We adopt the standard notations of the Nevanlinna theory of meromorphic functions (see \cite{Hayman & 1964, Laine & 1993, Yi & Yang & 1995}). For a non-constant meromorphic function $f$, we denote by $T(r,f)$ the Nevanlinna characteristic function of $f$ and by $S(r,f )$ any quantity satisfying $S(r,f)=o\{T(r,f)\}$ as $r\rightarrow\infty$ outside of an exceptional set of finite linear measure. We say that $\alpha(z)$ is a small function of $f$, if $\alpha(z)$ is a meromorphic function satisfying $T(r,\alpha(z))=S(r,f)$. We denote by $ E_{k)}(a,f)$ the set of all a-points of $f$ with multiplicities not exceeding $k$, where an a-point is counted according to its multiplicity. Also we denote by $\ol E_{k)}(a,f)$ the set of distinct a-points of $f$ with multiplicities not exceeding $k$. In addition, we need the following definitions. 
\begin{defi}\cite{Lahiri & JMMS2001}Let $p$ be a positive integer and $a\in\mathbb{C}\cup\{\infty\}$.\begin{enumerate}
		\item[(i)] $N(r,a;f\mid \geq p)$ ($\ol N(r,a;f\mid \geq p)$)denotes the counting function (reduced counting function) of those $a$-points of $f$ whose multiplicities are not less than $p$.\item[(ii)]$N(r,a;f\mid \leq p)$ ($\ol N(r,a;f\mid \leq p)$)denotes the counting function (reduced counting function) of those $a$-points of $f$ whose multiplicities are not greater than $p$.
	\end{enumerate}
\end{defi}
\begin{defi}\cite{Lahiri & Complex Var & 2001}
	Let $k$ be a positive integer or infinity. We denote by $N_k(r,a;f)$ the counting function of a-points of $f$, where an a-point of multiplicity $m$ is counted $m$ times if $m\leq k$ and $k$ times if $m>k$. Then
	\beas N_k(r,a;f)=\ol N(r,a;f)+\ol N(r,a;f\mid\geq 2)+\ldots+ \ol N(r,a;f\mid\geq k).\eeas
	Clearly, $N_1(r,a;f)=\ol N(r,a;f)$.
\end{defi}
\begin{defi}
	Let $a\in \mathbb{C}\cup\{\infty\}$. We denote $N_E(r,a;f,g)$ $(N_E(r,a;f,g))$ by the counting function (reduced counting function) of all common zeros of $f-a$ and $g-a$ with the same multiplicities and by $N_0(r,a;f,g)$ $(N_0(r,a;f, g))$ the counting function (reduced counting function) of all common zeros of $f-a$ and $g-a$ ignoring multiplicities. If
	\beas \ol N(r,a;f)+\ol N(r,a;g)-2\ol N_E(r,a;f,g)=S(r,f)+S(r, g),\eeas then we say that $f$ and $g$ share the value a ``CM". If \beas \ol N(r,a;f)+\ol N(r,a;g)-2\ol N_0(r,a;f,g)=S(r,f )+S(r,g),\eeas then we say that $f$ and $g$ share the value a ``IM".
	Definition 4
\end{defi}
\begin{defi}\cite{Lin & Lin & 2006}
	Let $f$ and $g$ share the value a ``IM" and $k$ be a positive integer or infinity. Then $\ol N^{E}_{k)}(r,a;f,g)$ denotes the reduced counting function of those a-points of $f$ whose multiplicities are equal to the corresponding a-points of $g$, and both of their multiplicities are not greater than $k$. $\ol N^0_{
	(k}(r,a;f,g)$ denotes the reduced counting
	function of those a-points of $f$ which are a-points of $g$, and both of their multiplicities
	are not less than $k$.
\end{defi}
\begin{defi}\cite{Lin & Lin & 2006}
	Let $a\in\mathbb{C}\cup\{\infty\}$ and $k$ be a positive integer or infinity. If
	\beas \ol N(r,a;f\mid \leq k)-\ol N^E_{k)}(r,a;f,g)=S(r,f),\eeas
	\beas \ol N(r,a;g\mid\leq k)-\ol N^E_{k)}(r,a;f,g)=S(r,g),\eeas
	\beas \ol N(r, a; f\mid \geq k+1)-\ol N_0^{(k+1}(r,a;f,g)=S(r, f ),\eeas
	\beas \ol N(r, a; g\mid\geq k+1)-\ol N_0^{(k+1}(r,a;f,g)= S(r, g),\eeas or if $k=0$ and
	\beas \ol N(r,a;f)-\ol N_0(r,a;f,g)=S(r,f),\eeas
	\beas \ol N(r,a;g)-\ol N_0(r,a;f,g)=S(r,g),\eeas
	then we say that $f$ and $g$ share the value a weakly with weight $k$ and we write $f$ and	$g$ share $``(a,k)"$.\end{defi}
\begin{defi}\cite{Banerjee & Mukherjee & 2007}
	Let $k$ be a positive integer and for $a\in \mathbb{C}-\{0\}$, $E_{k)}(a;f)=E_{k)}(a;g)$. Let $z_0$ be a zero of $f(z)- a$ of multiplicity $p$ and a zero of $g(z)-a$ of multiplicity $q$. We denote by $\ol N_L(r,a;f)(\ol N_L(r,a;g))$ the reduced counting function of those $a$-points of $f$ and $g$ for which $p>q\geq k+1(q>p\geq k+1)$, by $\ol N_E^{(k+1}(r,a;f)$ the reduced counting function of those a-points of $f$ and $g$ for which $p=q\geq k+1$, by $\ol N_{f\geq k+1}(r,a;f\mid g\neq a)$ the reduced counting functions of
	those a-points of $f$ and $g$ for which $p\geq k + 1$ and $q=0$.
\end{defi}
\begin{defi}\cite{Banerjee & Mukherjee & 2007}
	Let $k$ be a positive integer and for$a\in \mathbb{C}-\{0\}$, let $f$, $g$ share a ``IM". Let $z_0$ be a zero of $f(z)-a$ of multiplicity $p$ and a zero of $g(z)-a$ of multiplicity $q$. We denote by $\ol N_{f\geq k+1}(r,a;f\mid g=m)$ the reduced counting functions of those $a$-points of $f$ and $g$ for which $p\geq k+1$ and $q=m$. We can define $\ol N_L(r,a;f)(\ol N_L(r,a;g))$ and $\ol N_E^{(k+1}(r,a;f)$ in a similar manner as defined in the previous definition.
\end{defi}

\begin{defi}\cite{Lahiri & Banerjee & 2006}
	Let $a, b\in\mathbb{C}\cup \{\infty\}$. We denote by $N(r,a;f\mid g=b)$ the counting function of those $a$-points of $f$, counted according to multiplicity, which	are $b$-points of $g$.
\end{defi}
\begin{defi}\cite{Lahiri & Banerjee & 2006}
	Let $a, b\in\mathbb{C}\cup \{\infty\}$. We denote by $N(r,a;f\mid g\neq b)$ the counting function of those a-points of $f$, counted according to multiplicity, which	are not the b-points of $g$.
\end{defi}

We define shift and difference operators of $f(z)$ by $f(z+c)$ and $\Delta_cf(z)=f(z+c)-f(z)$, respectively. Note that $\Delta_c^nf(z)=\Delta_c^{n-1}(\Delta_{c}f(z))$, where $c$ is a nonzero complex number and $n\geq2$ is a positive integer. For further generalization of $\Delta_cf$, we now define the difference operator of an entire (meromorphic) function $f$ as $L_c(f)=f(z+c)+c_0f(z)$, where $c_0$ is a non-zero complex constant. Clearly, for the particular choice of the constant $c_0=-1$, we get $L_c(f)=\Delta_cf.$\par 

In 2007, Banerjee and Mukherjee \cite{Banerjee & Mukherjee & 2007} introduced a new type of sharing known as relaxed weighted sharing which is weaker than weakly weighted sharing as follows.
\begin{defi}\cite{Banerjee & Mukherjee & 2007}
	We denote by $N(r,a;f \mid= p; g\mid=q)$ the reduced counting function of common a-points of $f$ and $g$ with multiplicities $p$ and $q$, respectively.
\end{defi}
\begin{defi}\cite{Banerjee & Mukherjee & 2007}
	Let $a\in\mathbb{C}\cup\{\infty\}$ and $k$ be a positive integer or infinity. Suppose that $f$ and $g$ share the value a ``IM". If for $p\neq q$, \beas \sum_{p,q\leq k}N(r,a;f\mid=p;g\mid=q)=S(r),\eeas then we say that $f$ and $g$ share the value a with weight $k$ in a relaxed manner and we write $f$ and $g$ share $(a,k)^*$
\end{defi}
Let $P(z)=a_mz^m+a_{m-1}z^{m-1}+\ldots+a_0$ be a nonzero polynomial of degree $m$, where $a_{m}(\neq 0), a_{m-1}, \ldots, a_0 (\neq 0)$ are complex constants and $m$ is a positive integer.\par 
In 1959, Hayman \cite{Hayman & 1959} proved the following result.
\begin{theoA}\cite{Hayman & 1959}
	Let $f$ be a transcendental entire function and let $n(\geq1)$ be an integer. Then $f^nf^{\prime}=1$ has infinitely many solutions.
\end{theoA}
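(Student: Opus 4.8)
The plan is to argue by contradiction through Nevanlinna theory, reducing everything to a growth estimate for $f$. Suppose that $f^nf'-1$ has only finitely many zeros. Writing $\psi=f^nf'=\frac{1}{n+1}\left(f^{n+1}\right)'$, note that $\psi$ is entire and transcendental, so $T(r,\psi)=m(r,\psi)$, and since $m(r,\psi)\le (n+1)m(r,f)+S(r,f)$ one checks by the lemma on the logarithmic derivative that $S(r,\psi)=S(r,f)$. My goal is to establish
\begin{equation*}
(n-1)\,T(r,f)\le \overline N\!\left(r,0;f^nf'-1\right)+S(r,f),
\end{equation*}
which is exactly what the two main theorems should yield.

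The engine of the argument is the elementary identity $\dfrac{1}{f^{n+1}}=\dfrac{f'/f}{\psi}$. Taking proximity functions and applying the lemma on the logarithmic derivative to $f'/f$ gives $(n+1)\,m\!\left(r,\tfrac1f\right)\le m\!\left(r,\tfrac1\psi\right)+S(r,f)$. The first fundamental theorem rewrites the right-hand side as $m\!\left(r,\tfrac1\psi\right)=T(r,\psi)-N(r,0;\psi)+O(1)$, while the second fundamental theorem applied to the entire function $\psi$ at the values $0,1,\infty$ yields $T(r,\psi)\le \overline N(r,0;\psi)+\overline N(r,1;\psi)+S(r,f)$. The point that makes the zeros of $\psi$ cheap is multiplicity: at a zero of $f$ of order $k$ the function $\psi=f^nf'$ vanishes to order $(n+1)k-1$, whence
\begin{equation*}
N(r,0;\psi)-\overline N(r,0;\psi)\ge (n+1)\,N(r,0;f)-2\,\overline N(r,0;f).
\end{equation*}
Feeding these three facts back into the identity, using $m(r,1/f)=T(r,f)-N(r,0;f)+O(1)$ and $\overline N(r,0;f)\le T(r,f)$, and observing $\overline N(r,1;\psi)=\overline N(r,0;f^nf'-1)$, the terms in $N(r,0;f)$ cancel and the displayed estimate drops out.

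For $n\ge 2$ this finishes the proof at once: under the contradiction hypothesis $\overline N(r,0;f^nf'-1)=O(\log r)=S(r,f)$, so $(n-1)T(r,f)=S(r,f)$, forcing $T(r,f)=S(r,f)$, which is impossible for a transcendental $f$.

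I expect the genuine obstacle to be the borderline case $n=1$, where the coefficient $n-1$ collapses to zero and the estimate becomes vacuous; this reflects the fact that at a \emph{simple} zero of $f$ the product $ff'$ has only a simple zero, so the multiplicity surplus that powered the contradiction disappears. The same computation still delivers $m(r,1/f)=S(r,f)$ and $\overline N(r,0;f)=T(r,f)+S(r,f)$, i.e.\ $f$ carries a maximal supply of essentially simple zeros while $ff'-1=P\,e^{\alpha}$ for a polynomial $P$ and entire $\alpha$. To close this case I would exploit that $f=\sqrt{2(\Phi+z)}$ with $\Phi=\tfrac12 f^2-z$ is single-valued entire, so every zero of $\Phi+z$ has even order, and combine this rigidity with $\Phi'=P\,e^{\alpha}$ and the smallness of $\Phi''/\Phi'=P'/P+\alpha'$ to reach a contradiction; alternatively one invokes a Clunie-type lemma or Hayman's original refined estimate. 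This delicate $n=1$ analysis is where the real work lies.
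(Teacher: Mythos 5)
The paper does not actually prove Theorem A; it is quoted as background from Hayman's 1959 article, so there is no internal proof to compare against, and your attempt has to be judged on its own terms. For $n\ge 2$ your argument is correct and complete, and it is essentially the classical one: the identity $1/f^{n+1}=(f'/f)/\psi$ with $\psi=f^nf'$, the lemma on the logarithmic derivative, the second fundamental theorem for the entire function $\psi$ at $0,1,\infty$, and the multiplicity surplus $(n+1)k-1$ of $\psi$ at a $k$-fold zero of $f$ combine exactly as you say to give $(n-1)T(r,f)\le\overline N\left(r,0;f^nf'-1\right)+S(r,f)$, which is incompatible with $f^nf'-1$ having finitely many zeros once $n\ge2$. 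Every step of that chain checks out.

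The genuine gap is precisely where you locate it: the statement includes $n=1$, and what you offer there is a plan, not a proof. The key inequality is vacuous at $n=1$, and the auxiliary consequences you extract ($m(r,1/f)=S(r,f)$, $\overline N(r,0;f)=T(r,f)+S(r,f)$, $ff'-1=Pe^{\alpha}$) do not by themselves produce a contradiction; the proposed exploitation of the even-order zeros of $\tfrac12 f^2$ together with the smallness of $\Phi''/\Phi'$ is never carried out, and ``alternatively one invokes a Clunie-type lemma'' is an appeal to exactly the argument that is missing. This is not a minor loose end: the entire case $n=1$ resisted the 1959 methods and was only settled by Clunie in 1967 via a substantially more delicate estimate on the differential polynomial $ff'$ (so the theorem as quoted in the paper, with $n\ge1$ credited solely to Hayman 1959, is itself slightly misattributed). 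As written, your proof establishes the theorem only for $n\ge 2$.
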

Regarding uniqueness of the above theorem, Yang and Hua \cite{Yang & Hua & 1997}, in 1997 obtained the following result.
\begin{theoB}\cite{Yang & Hua & 1997}
	Let $f$ and $g$ be two non-constant entire functions, $n\geq6$ a positive integer. If $f^nf^{\prime}$ and $g^ng^{\prime}$	share $1$ CM, then either $f(z)=c_1e^{cz}$, $g(z)=c_2e^{-cz}$, where $c_1$, $c_2$ and $c$ are three constants satisfying $(c_1c_2)^{n+1}c^2=-1$ or $f\equiv tg$ for a constant $t$ satisfying $t^{n+1}=1$.
\end{theoB}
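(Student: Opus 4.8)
The plan is to set $F=f^nf'$ and $G=g^ng'$ and to exploit that, since $f,g$ are entire, $F$ and $G$ are \emph{entire} functions sharing the value $1$ CM. Writing $F=\frac{1}{n+1}(f^{n+1})'$ and combining the lemma on the logarithmic derivative with the absence of poles, I would first record the growth relations $T(r,F)=(n+1)T(r,f)+S(r,f)$ and $T(r,G)=(n+1)T(r,g)+S(r,g)$, and deduce that $S(r,f)$ and $S(r,g)$ are interchangeable (write $S(r)$). I would also isolate the two structural facts that drive everything: a zero of $f$ of order $k$ is a zero of $F$ of order $(n+1)k-1\ge n$, and, by the first main theorem applied to $f'/f$, the zeros of $f'$ away from the zeros of $f$ satisfy $N(r,0;f'\mid f\ne 0)\le \ol N(r,0;f)+S(r)$. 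Together these give the sharp estimate $N_2(r,0;F)\le 3\,\ol N(r,0;f)+S(r)\le 3T(r,f)+S(r)$, and symmetrically for $G$.

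Next I would form the standard auxiliary function
\[ H=\left(\frac{F''}{F'}-\frac{2F'}{F-1}\right)-\left(\frac{G''}{G'}-\frac{2G'}{G-1}\right) \]
and split into the cases $H\not\equiv 0$ and $H\equiv 0$. In the first case the usual estimate for a CM-shared value (the pole divisor of $H$ being controlled by the multiple zeros of $F$ and $G$, with no contribution from poles because $F,G$ are entire) yields $T(r,F)\le N_2(r,0;F)+N_2(r,0;G)+S(r)$ together with the symmetric inequality for $T(r,G)$. Adding them and inserting the counting estimates gives $(n+1)\bigl(T(r,f)+T(r,g)\bigr)\le 6\bigl(T(r,f)+T(r,g)\bigr)+S(r)$, i.e.\ $n\le 5$, contradicting $n\ge 6$. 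This is exactly where the hypothesis $n\ge 6$ enters, and pinning down the constant $6$ (hence the threshold) is the main quantitative obstacle: any looser bound on $N_2(r,0;F)$ forces a larger $n$.

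Hence $H\equiv 0$, and integrating twice produces a bilinear relation $F=\frac{aG+b}{cG+d}$ with $ad-bc\ne 0$. The endgame is to show this degenerates. Since $F$ is entire it omits $\infty$, so if $c\ne 0$ then $G$ must omit the finite value $-d/c$; but by Hayman's Theorem A (applied after a scaling that reduces an arbitrary nonzero value to $1$), $G=g^ng'$ assumes every nonzero value infinitely often, so its only possible exceptional value is $0$, forcing $d=0$ and then $b,c\ne 0$. This makes $F=\frac{a}{c}+\frac{b}{cG}$, whose entirety forces $G$, and hence $g$ and $g'$, to be zero-free; running the analogous argument on the inverse transformation and comparing the resulting omitted-value data, the only surviving possibilities are $F\equiv G$ and $FG\equiv 1$.

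Finally I would resolve the two surviving cases. If $F\equiv G$ then $(f^{n+1})'=(g^{n+1})'$, so $f^{n+1}=g^{n+1}+C$; factoring $f^{n+1}-g^{n+1}=\prod_{j=0}^{n}(f-\omega^j g)$ with $\omega=e^{2\pi i/(n+1)}$ and using that $n+1\ge 7$ zero-free linear combinations of $f,g$ must be linearly dependent, Borel's theorem forces $C=0$, giving $f\equiv tg$ with $t^{n+1}=1$. If $FG\equiv 1$ then $f^nf'g^ng'\equiv 1$, so $F$ and $G$, and therefore $f$ and $g$, are zero-free; writing $f=e^{\beta}$, $g=e^{\delta}$ and substituting into $\beta'\delta'\,e^{(n+1)(\beta+\delta)}\equiv 1$, a short logarithmic-derivative argument (the terms $\beta''/\beta'$ and $\delta''/\delta'$ being $S(r)$) forces $\beta$ and $\delta$ to be linear, i.e.\ $f=c_1e^{cz}$, $g=c_2e^{-cz}$, and the product identity then yields $(c_1c_2)^{n+1}c^2=-1$. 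The subsidiary case in which $f$ or $g$ is a polynomial is disposed of separately by a direct degree count.
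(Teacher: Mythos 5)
First, a point of comparison: the paper does not prove Theorem~B at all --- it is quoted verbatim from Yang--Hua \cite{Yang & Hua & 1997} as background, so there is no in-paper proof to measure your attempt against. What can be said is that your architecture (set $F=f^nf'$, $G=g^ng'$; introduce the auxiliary function $H$ of (\ref{e3.1}); split on $H\not\equiv 0$ versus $H\equiv 0$; in the first case beat the second main theorem with $N_2$-counting estimates; in the second integrate to a M\"obius relation and reduce to $F\equiv G$ or $FG\equiv 1$; finish with Borel/linearization) is exactly the template the paper itself runs for its own Theorems \ref{t1}--\ref{t3}, with $f^nf'$ replaced by $(f^nP(f)L_c(f))^{(k)}$. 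So the route is the standard and correct one, and your endgame (factoring $f^{n+1}-g^{n+1}$, and the $f=e^{\beta}$, $g=e^{\delta}$ analysis when $FG\equiv 1$) is the classical Yang--Hua conclusion.

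There is, however, one genuine gap, and it sits precisely at the point you yourself identify as the quantitative crux. You ``record'' the growth relation $T(r,F)=(n+1)T(r,f)+S(r,f)$ for $F=f^nf'$. The lemma on the logarithmic derivative gives only the upper bound $T(r,F)\le (n+1)T(r,f)+S(r,f)$; for the lower bound the natural computation is $m(r,f^{n+1})\le m(r,F)+m(r,f/f')$, and $m(r,f/f')$ is \emph{not} an $S(r,f)$ quantity in general --- the honest estimate (for entire $f$) is $m(r,f/f')\le \ol N(r,0;f)-N(r,0;f'\mid f\ne 0)+S(r,f)$, so what you actually get is
\[
(n+1)T(r,f)\le T(r,F)+\ol N(r,0;f)-N(r,0;f'\mid f\ne 0)+S(r,f).
\]
If you discard the negative term and simply bound the error by $T(r,f)$, your final inequality degrades to $(n+1)\le 7$ and you only reach the theorem for $n\ge 7$. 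The threshold $n\ge 6$ is recovered only by keeping the $-N(r,0;f'\mid f\ne 0)$ credit and cancelling it against the same quantity hiding inside $N_2(r,0;F)\le 2\ol N(r,0;f)+N(r,0;f'\mid f\ne 0)$; done jointly, this gives $(n+1)T(r,f)\le 3\ol N(r,0;f)+3\ol N(r,0;g)+S(r)$ and the contradiction at $n\ge 6$. Because you state the two estimates separately and assert the exact growth identity without proof, the argument as written does not close at $6$. (This is exactly the bookkeeping the present paper is careful about in its own setting: Lemma \ref{lem3.4} claims the exact identity only under the extra hypothesis that $f$ and $f(z+c)$ share $0$ CM, whereas the main proofs instead use Lemma \ref{lem3.8}, a one-sided bound carrying the credit term $-N(r,0;L_c(f))$, which is then cancelled inside the $N_{k+2}$ counts in (\ref{e4.3}).) A secondary, more minor point: the inequality $T(r,F)\le N_2(r,0;F)+N_2(r,0;G)+S(r)$ for CM-shared $1$ with $H\not\equiv 0$ is itself a lemma that needs proof (it is the content of the chain (\ref{e4.1})--(\ref{e4.3}) in the case $k=0$), and the ``comparing omitted-value data'' step in Subcase $AC=0$ is waved through; both are standard and repairable, but they should be written out.
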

In 2002, Fang and Fang \cite{Fang & Fang & 2002} extends Theorem B in the following manner
\begin{theoC}\cite{Fang & Fang & 2002}
	Let $f$ and $g$ be two non-constant entire functions, and let $n(\geq8)$ be an	integer. If $f^n(f-1)f^{\prime}$ and $g^n(g-1)g^{\prime}$ share 1 CM, then $f\equiv g$.
\end{theoC}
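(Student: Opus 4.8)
The plan is to show that the hypothesis forces $F\equiv G$, where $F:=f^{n}(f-1)f'$ and $G:=g^{n}(g-1)g'$, and then to integrate twice to recover $f\equiv g$. Since $f$ and $g$ are entire, $F$ and $G$ are entire and, by hypothesis, share the value $1$ CM. First I would record the growth relation: writing $F$ as a polynomial in $f$ times $f'$ and using the lemma on the logarithmic derivative together with Mokhon'ko's theorem, one obtains $T(r,F)=(n+2)\,T(r,f)+S(r,f)$ and likewise $T(r,G)=(n+2)\,T(r,g)+S(r,g)$. The CM sharing of $1$ then makes $T(r,f)$ and $T(r,g)$ comparable, so the error terms $S(r,f)$ and $S(r,g)$ may be used interchangeably, written simply as $S(r)$.

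The core of the argument is the auxiliary function
\[
H=\left(\frac{F''}{F'}-\frac{2F'}{F-1}\right)-\left(\frac{G''}{G'}-\frac{2G'}{G-1}\right),
\]
and the dichotomy $H\equiv 0$ versus $H\not\equiv 0$. A local computation at a common $1$-point shows that, because $F$ and $G$ share $1$ CM and hence with equal multiplicities, the singular parts of the two brackets cancel; thus the shared $1$-points are not poles of $H$. Consequently, if $H\not\equiv 0$, the poles of $H$ are simple and located only at the zeros of $F'$ and $G'$, while the logarithmic derivative lemma gives $m(r,H)=S(r)$. Substituting the resulting estimate for $N(r,H)$ into the standard Lahiri-type inequality for two functions sharing $1$ CM yields
\[
T(r,F)\le N_{2}(r,0;F)+N_{2}(r,0;G)+S(r),
\]
and then each $N_{2}$ term is bounded through the zeros of $f$ (which have multiplicity at least $n$ and so contribute only a truncated count), of $f-1$, and of $f'$. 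Combining this with $T(r,F)=(n+2)T(r,f)+S(r)$ and its analogue produces an inequality of the shape $c_{n}\,(T(r,f)+T(r,g))\le S(r)$ whose coefficient $c_{n}$ is positive precisely when $n\ge 8$. The hard part is exactly this bookkeeping: keeping track of the truncated counting functions with their correct weights so that the threshold lands at $n\ge 8$. Since that inequality is impossible, we conclude $H\equiv 0$.

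When $H\equiv 0$, integrating twice gives a bilinear relation
\[
\frac{1}{F-1}=\frac{A}{G-1}+C,\qquad A\neq 0,
\]
with constants $A,C$. The entirety of $F$ and $G$ now eliminates the nontrivial cases. If $C\neq 0$, then $F$ would have poles wherever $G=1-A/C$, so $G$ must omit this value; the second main theorem applied to $G$ at $0$, $1-A/C$ and $\infty$ then gives $T(r,G)\le \overline{N}(r,0;G)+S(r)\le 3\,T(r,g)+S(r)$, contradicting $T(r,G)=(n+2)T(r,g)+S(r)$ (the degenerate subcase $A=C$ forces $g$ to omit $0$ and $1$, contradicting Picard). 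If $C=0$ but $A\neq 1$, then $G=AF+(1-A)$, and applying the second main theorem to $F$ at $0$ and $1-1/A$ gives $T(r,F)\le 6\,T(r,f)+S(r)$, again contradicting $T(r,F)=(n+2)T(r,f)+S(r)$. Hence $A=1$, $C=0$, i.e. $F\equiv G$.

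It remains to pass from $F\equiv G$, that is $f^{n}(f-1)f'\equiv g^{n}(g-1)g'$, to $f\equiv g$. Integrating and setting $P(w)=\frac{w^{n+2}}{n+2}-\frac{w^{n+1}}{n+1}$ gives $P(f)\equiv P(g)+c$ for a constant $c$. If $c\neq 0$, I would apply the second main theorem to $f$ over the roots of $P(w)=c$: since $P(f)=c$ holds exactly where $P(g)=0$, i.e. where $g=0$ or $g=\frac{n+2}{n+1}$, the total reduced count of these values of $f$ is at most $2\,T(r,g)+S(r)$, whereas $P(w)-c$ has at least $n+1$ distinct roots; the second main theorem then gives $(n-2)\,T(r,f)\le S(r)$, impossible for $n\ge 8$. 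Therefore $c=0$ and $P(f)\equiv P(g)$. Finally, factoring $P(x)-P(y)=(x-y)R(x,y)$, the vanishing of the symmetric factor $R(f,g)$ can be excluded by one more application of the second main theorem, using that $P$ has only the two distinct critical values $P(0)=0$ and $P(1)\neq 0$, so that no such algebraic relation between the nonconstant entire functions $f$ and $g$ survives for $n\ge 8$ (consistency of leading coefficients otherwise forcing $f=tg$ with $t^{n+2}=t^{n+1}=1$, hence $t=1$). This leaves $f\equiv g$, completing the proof.
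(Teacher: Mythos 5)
This statement is quoted in the paper as background (Theorem~C, attributed to Fang and Fang); the paper itself contains no proof of it, so I am judging your argument on its own terms. Your overall architecture --- the auxiliary function $H$, the dichotomy $H\equiv 0$ versus $H\not\equiv 0$, the M\"obius relation and its elimination by the Second Fundamental Theorem, and the algebraic endgame from $f^{n}(f-1)f'\equiv g^{n}(g-1)g'$ via $h=f/g$ and the roots of unity of orders $n+1$ and $n+2$ --- is exactly the standard one, and the endgame and the $H\equiv 0$ subcases are essentially sound. But there are two genuine gaps. First, the asserted identity $T(r,F)=(n+2)T(r,f)+S(r,f)$ for $F=f^{n}(f-1)f'$ is not justified: Mokhon'ko's theorem applies to rational functions of $f$, not to differential polynomials, and while the upper bound is easy for entire $f$, the reverse direction via $m\bigl(r,f^{n+1}(f-1)\bigr)\le m(r,F)+m(r,f/f')$ costs an extra $\overline N(r,0;f)$, yielding only $(n+1)T(r,f)\le T(r,F)+S(r,f)$ in general. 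This is precisely why papers in this area (including the present one, in Lemma 3.8) state an inequality with a correction term rather than an equality, and your later contradictions must be rechecked against the weaker bound.

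Second, and more seriously, the entire quantitative content of the theorem --- that the contradiction in the case $H\not\equiv 0$ arrives exactly at $n\ge 8$ --- is deferred to ``bookkeeping'' that you do not perform, and the plan you describe does not obviously deliver it. If one feeds $T(r,F)\le N_{2}(r,0;F)+N_{2}(r,0;G)+S(r)$ the straightforward estimates $N_{2}(r,0;F)\le 2\overline N(r,0;f)+N_{2}(r,1;f)+N_{2}\bigl(r,0;f'\mid f\neq 0,1\bigr)$ together with the Lahiri--Dewan bound for the last term, each $N_{2}$ term is of size roughly $5T(r,f)$ to $6T(r,f)$, and symmetrizing gives a contradiction only for $n$ around $10$ or worse. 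Reaching $n\ge 8$ requires the finer application of the Second Fundamental Theorem in which the ramification term $N_{0}(r,0;F')$ is retained and the zeros of $f'$ sitting over $1$-points of $f$ are not double-counted --- compare the long chain of inequalities in the proof of Theorem 1.1 of this paper, where exactly this care produces the stated threshold. As written, your proof establishes the conclusion only for a larger (unspecified) lower bound on $n$, not for $n\ge 8$.
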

	In 2004, Lin and Yi \cite{Lin & Lin & 2006} extended Theorem C by replacing value sharing with fixed point sharing and obtained the following result.
	\begin{theoD}\cite{Lin & Lin & 2006}
		Let $f$ and $g$ be two transcendental entire functions, and let $n(\geq7)$ be an integer. If $f^n(f-1)f^{\prime}$	and $g^n(g-1)g^{\prime}$ share $z$ CM, then $f\equiv g$.
	\end{theoD}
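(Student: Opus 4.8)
The plan is to follow the classical Nevanlinna-theoretic route for such power–product uniqueness problems. First I would normalize: since $f$ and $g$ are transcendental entire, $z$ is a small function, so setting $F=\dfrac{f^n(f-1)f'}{z}$ and $G=\dfrac{g^n(g-1)g'}{z}$ converts the hypothesis into the statement that $F$ and $G$ share the value $1$ CM, the zeros and the pole of $z$ contributing only $S(r)$. Because $f,g$ are entire, $F$ and $G$ have no poles apart from the simple one coming from $1/z$, so $N(r,\infty;F)=S(r,f)$ and $N(r,\infty;G)=S(r,g)$. A Mokhon'ko-type computation then gives $T(r,F)=(n+2)T(r,f)+S(r,f)$ and likewise for $G$, since $f^n(f-1)f'=\bigl(\tfrac{f^{n+2}}{n+2}-\tfrac{f^{n+1}}{n+1}\bigr)'$ and the lemma on the logarithmic derivative controls $m(r,f'/f)$.

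Next I would invoke the standard trichotomy for two functions sharing $1$ CM, obtained by examining $H=\bigl(\tfrac{F''}{F'}-\tfrac{2F'}{F-1}\bigr)-\bigl(\tfrac{G''}{G'}-\tfrac{2G'}{G-1}\bigr)$: either (i) the second-main-theorem inequality $T(r,F)\le N_2(r,0;F)+N_2(r,0;G)+N_2(r,\infty;F)+N_2(r,\infty;G)+S(r)$ holds together with its symmetric counterpart, or (ii) $FG\equiv 1$, or (iii) $F\equiv G$. The heart of the argument is to dispose of case (i). Here I would estimate $N_2(r,0;F)$ by splitting the zeros of $F$ into zeros of $f$ (each of multiplicity $\ge n$, so truncation gives at most $2\ol N(r,0;f)$), zeros of $f-1$, and the remaining zeros of $f'$, producing a bound of the shape $N_2(r,0;F)\le c\,T(r,f)+S(r,f)$ with a small explicit constant $c$. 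Substituting into (i), adding the two symmetric inequalities, and using $T(r,F)=(n+2)T(r,f)+S(r,f)$ yields $(n+2)\bigl(T(r,f)+T(r,g)\bigr)\le c\bigl(T(r,f)+T(r,g)\bigr)+S(r)$, which is impossible once $n\ge 7$; hence (i) cannot occur.

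I would then eliminate case (ii): $FG\equiv 1$ forces $f^n(f-1)f'\,g^n(g-1)g'\equiv z^2$. Since $f,g$ are entire, a zero of $f$ of order $k$ produces a zero of order $(n+1)k-1$ of the left-hand side, which $z^2$ cannot match, and a deficiency count on $f$ and $g$ rules this relation out for $n\ge 7$ even in the zero-free situation. This leaves case (iii), $F\equiv G$, i.e.\ $f^n(f-1)f'\equiv g^n(g-1)g'$. Integrating gives $\tfrac{f^{n+2}}{n+2}-\tfrac{f^{n+1}}{n+1}\equiv \tfrac{g^{n+2}}{n+2}-\tfrac{g^{n+1}}{n+1}+C$ for a constant $C$; a growth comparison shows $C=0$, and then the structure of the polynomial (concretely, if $f/g$ were a constant $h$ the relation would force $h^{n+1}=h^{n+2}=1$, hence $h=1$, while the non-constant-ratio case is excluded by a Nevanlinna count) yields $f\equiv g$.

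The main obstacle I anticipate is the sharp bookkeeping in case (i): forcing the constant $c$ to be small enough that the threshold emerges as exactly $n\ge 7$ requires carefully tracking how the zeros of $f'$ that are not zeros of $f(f-1)$ enter $N_2(r,0;F)$, and exploiting the CM (rather than merely IM) nature of the sharing to sharpen the estimate. The second delicate point is ruling out $FG\equiv 1$ cleanly for entire, possibly zero-free, $f$ and $g$, where one cannot simply argue by matching zeros and must instead rely on a deficiency-relation contradiction.
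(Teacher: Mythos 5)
Theorem D is not proved in this paper at all: it is quoted from Lin--Yi \cite{Lin & Lin & 2006} as background, so there is no internal proof to compare against. Judged on its own terms, your outline is the standard machinery (and it is the same skeleton the paper uses for its Theorems 1.1--1.3: divide by the small function, study the auxiliary function $H$, apply the second fundamental theorem with truncated counting functions, then dispose of the M\"obius cases $FG\equiv1$ and $F\equiv G$). Your treatments of $FG\equiv1$ and of $F\equiv G$ are essentially right: in the first, $f$ omits $0$ and takes $1$ at most once (only at the origin, with controlled multiplicity), which contradicts the second fundamental theorem; in the second, the integration constant and the ratio $h=f/g$ are handled exactly as you say.

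There are, however, two concrete gaps. First, the claimed identity $T(r,F)=(n+2)T(r,f)+S(r,f)$ is not free: the upper bound is immediate from $m(r,f'/f)=S(r,f)$, but the reverse direction via $m\left(r,f^{n+1}(f-1)\right)\leq m(r,F_1)+m\left(r,f/f'\right)$ costs $N\left(r,\infty;f'/f\right)=\ol N(r,0;f)$, so one only gets $(n+1)T(r,f)\leq T(r,F_1)+S(r,f)$ in general; the correct tool is an inequality of the type of the paper's Lemma 2.6, $nT(r,f)\leq T(r,F_1)-N(r,0;f')+S(r,f)$, and the arithmetic must be run with that. Second, and more seriously, the case (i) estimate as you set it up does not reach $n\geq7$. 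With the generic trichotomy $T(r,F)\leq N_2(r,0;F)+N_2(r,0;G)+N_2(r,\infty;F)+N_2(r,\infty;G)+S(r)$ one has $N_2(r,0;F)\leq 2\ol N(r,0;f)+N_2(r,1;f)+N(r,0;f')+S(r,f)\leq 5T(r,f)+S(r,f)$, and after symmetrizing the computation lands at roughly $n\geq10$. The threshold $7$ in Lin--Yi is obtained by exploiting the full CM hypothesis (exact equality of the $1$-point counting functions and a finer analysis of $H$), not by the $N_2$-trichotomy you invoke; so the "bookkeeping obstacle" you flag is not a routine verification but the actual missing content of the proof.
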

In 2010, Zhang \cite{Zhang & 2010} obtained the following result by replacing $f^{\prime}(z)$ with $f(z+c)$, where $c$ is a non-zero constant.
\begin{theoE}\cite{Zhang & 2010}
	Let $f$ and $g$ be two transcendental entire functions of finite order, and $\alpha(z)(\not\equiv 0,\infty)$ be a small function with respect to both $f$ and $g$. Suppose that $c$ is a nonzero complex constant and $n\geq 7$ be an integer. If $f^n(z)(f(z)-1)f(z+c)$ and $g^n(z)(g(z)-1)g(z+c)$ share $\alpha(z)$ CM, then $f(z)\equiv g(z)$.
\end{theoE}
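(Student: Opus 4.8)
\emph{Proof proposal.} The plan is to write $F(z)=f^{n}(z)(f(z)-1)f(z+c)$ and $G(z)=g^{n}(z)(g(z)-1)g(z+c)$, so that $F$ and $G$ are entire functions of finite order sharing $\alpha$ CM, and to reduce the problem to a rigidity statement about $F$ and $G$. The first step is to record the growth of these difference polynomials. Because $f$ has finite order, the difference analogue of the lemma on the logarithmic derivative gives $m\!\left(r,\frac{f(z+c)}{f(z)}\right)=S(r,f)$ and $T(r,f(z+c))=T(r,f)+S(r,f)$; combining this with the Valiron--Mohon'ko theorem applied to the polynomial factor $f^{n}(f-1)$ yields
\[
T(r,F)=(n+2)\,T(r,f)+S(r,f),\qquad T(r,G)=(n+2)\,T(r,g)+S(r,g).
\]
Since $f$ and $g$ are entire, $F$ and $G$ have no poles apart from those coming from $\alpha$, so $\ol N(r,\infty;F)=S(r,f)$ and $\ol N(r,\infty;G)=S(r,g)$. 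I would also note the zero estimate $\ol N(r,0;F)\le \ol N(r,0;f)+\ol N(r,1;f)+\ol N(r,0;f(z+c))\le 3\,T(r,f)+S(r,f)$, and symmetrically for $G$.

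The core of the argument is the standard CM-sharing dichotomy. Since $F-\alpha$ and $G-\alpha$ share $0$ CM and all functions in sight have finite order, I would form the usual auxiliary function
\[
H=\frac{(F-\alpha)''}{(F-\alpha)'}-\frac{2(F-\alpha)'}{F-\alpha}-\left(\frac{(G-\alpha)''}{(G-\alpha)'}-\frac{2(G-\alpha)'}{G-\alpha}\right),
\]
and split into the cases $H\equiv0$ and $H\not\equiv0$. When $H\not\equiv0$, applying Nevanlinna's second main theorem to $F$ and to $G$ and using the sharing relation together with the estimates above produces an inequality in which the reduced counting functions $\ol N(r,0;f),\ \ol N(r,1;f),\ \ol N(r,0;f(z+c))$ and their $g$-analogues — each bounded by the respective characteristic plus $S$ — control $T(r,F)+T(r,G)$. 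Tracking the multiplicities carefully, in particular that each zero of $f$ is a zero of $F$ of order at least $n$, one finds that this estimate becomes self-contradictory precisely once $n\ge7$; hence $H\equiv0$. Integrating $H\equiv0$ twice gives a Möbius relation between $F-\alpha$ and $G-\alpha$, which, after using the growth estimates and the absence of poles, collapses to one of the two possibilities $F\equiv G$ or $FG\equiv\alpha^{2}$. I expect the bookkeeping of the counting functions here — squeezing out exactly the bound $n\ge7$ rather than a larger integer — to be the most delicate part of the argument.

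It then remains to eliminate $FG\equiv\alpha^{2}$ and to pass from $F\equiv G$ to $f\equiv g$. If $FG\equiv\alpha^{2}$, then every zero of $f$ and of $f-1$ forces $\alpha$ to vanish there, so $\ol N(r,0;f)+\ol N(r,1;f)=S(r,f)$; since $f$ is entire, the second main theorem then gives $T(r,f)=S(r,f)$, contradicting transcendence. Thus $F\equiv G$, i.e. $f^{n}(f-1)f(z+c)\equiv g^{n}(g-1)g(z+c)$. Writing $h=f/g$ and cancelling the common factor $g^{n}g(z+c)$ gives $h^{n}h(z+c)(hg-1)\equiv g-1$. If $h$ is a constant $t$ this reads $t^{n+2}g-t^{n+1}=g-1$, forcing $t^{n+2}=t^{n+1}=1$ and hence $t=1$, that is $f\equiv g$. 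If $h$ is non-constant, the same identity becomes $\big(h^{n+1}h(z+c)-1\big)g=h^{n}h(z+c)-1$, and I would rule this out by a zero- and pole-counting analysis matching the high-order zeros that $f^{n}$ and $g^{n}$ impose on the two sides, which cannot be reconciled for non-constant $h$. This non-constant ratio case, together with the constant governing the defect inequality, is where the main technical effort lies.
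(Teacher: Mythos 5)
Your outline follows the same template this paper (and the literature it builds on) uses: normalize by $\alpha$, form the auxiliary function $H$, split on $H\equiv 0$ versus $H\not\equiv 0$, use the second main theorem with truncated counting functions in the first case, and integrate the M\"obius relation in the second to reach the dichotomy $F\equiv G$ or $FG\equiv\alpha^{2}$. Note that the paper itself does not prove Theorem E --- it is quoted from Zhang --- but your plan matches the proofs of Theorems \ref{t1}--\ref{t3} step for step (compare Lemmas \ref{lem3.4}, \ref{lem3.8} with your growth estimates, and Case 2 of the proof of Theorem \ref{t1} with your treatment of $H\equiv0$), and your elimination of $FG\equiv\alpha^{2}$ via $\ol N(r,0;f)+\ol N(r,1;f)=S(r,f)$ is the same idea the paper phrases as ``two finite Picard exceptional values.'' The bookkeeping you defer in the $H\not\equiv0$ case does close at $n\geq 7$ (it is exactly the $k=0$, $m=1$ instance of the paper's $n\geq 2k+m+6$), so that part of the plan is sound.

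The one genuine gap is the final step, where $F\equiv G$ gives $h^{n}h(z+c)(hg-1)\equiv g-1$ with $h=f/g$ non-constant. Your stated strategy --- ``matching the high-order zeros that $f^{n}$ and $g^{n}$ impose on the two sides'' --- does not apply here, because those factors have already been cancelled: the identity you must contradict is $g\bigl(h^{n+1}h(z+c)-1\bigr)=h^{n}h(z+c)-1$, in which no $n$-fold zeros remain. What is actually needed is an argument that $g=\bigl(h^{n}h(z+c)-1\bigr)/\bigl(h^{n+1}h(z+c)-1\bigr)$ cannot be a transcendental \emph{entire} function for non-constant meromorphic $h$; this forces every zero of $h^{n+1}h(z+c)-1$ to be a zero of $h^{n}h(z+c)-1$, hence a point where $h=1$, and one then has to run a characteristic/counting estimate on $h$ and $h(z+c)$ (using Lemmas \ref{lem3.1} and \ref{lem3.3}) to derive a contradiction. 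This is not routine: in the paper's more general setting this very case cannot be eliminated at all and survives as conclusion (b) of Theorem \ref{t1} (the algebraic relation $R(w_1,w_2)=0$). So for Theorem E you must supply a genuine argument specific to $P(f)=f-1$ rather than bookkeeping; as written, the proposal asserts the conclusion of this case without a workable method.
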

Recently, Meng \cite{Meng & 2014} improved Theorem E by relaxing the nature of sharing the small function and obtained following results.
\begin{theoF}\cite{Meng & 2014}
	Let $f$ and $g$ be two transcendental entire functions of finite order, and $\alpha(z)(\not\equiv 0, \infty)$ be a small function with respect to both $f$ and $g$. Suppose that $c$ is a nonzero complex constant and $n\geq7$ is an integer. If $f^n(z)(f(z)-1)f(z +c)$ and $g^n(z)(g(z)-1)g(z+c)$ share $``(\alpha(z),2)"$, then $f(z)\equiv g(z)$.
\end{theoF}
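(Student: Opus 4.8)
The plan is to run the standard weighted-sharing machinery of Nevanlinna theory, adapted to the finite-order difference setting. First I would normalize by setting
\be
F=\frac{f^{n}(f-1)f(z+c)}{\alpha},\qquad G=\frac{g^{n}(g-1)g(z+c)}{\alpha},
\ee
so that, since $\alpha$ is small with respect to both functions, the hypothesis becomes ``$F$ and $G$ share $(1,2)$ weakly'' in the sense of Definition~6, modulo $S(r,f)+S(r,g)$. Throughout I would use the finite-order difference estimates $T(r,f(z+c))=T(r,f)+S(r,f)$ and $m\!\left(r,f(z+c)/f(z)\right)=S(r,f)$ together with the Valiron--Mohon'ko lemma to obtain
\be
T(r,F)=(n+2)\,T(r,f)+S(r,f),\qquad T(r,G)=(n+2)\,T(r,g)+S(r,g),
\ee
and to express $\overline N(r,0;F)$ through $\overline N(r,0;f)$, $\overline N(r,1;f)$ and $\overline N(r,0;f(z+c))$. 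A short preliminary step, using the sharing relation in the second fundamental theorem, shows $T(r,f)$ and $T(r,g)$ have the same order of growth, so that $S(r,f)$ and $S(r,g)$ may be used interchangeably.

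The engine of the argument is the auxiliary function
\be
H=\left(\frac{F''}{F'}-\frac{2F'}{F-1}\right)-\left(\frac{G''}{G'}-\frac{2G'}{G-1}\right),
\ee
and I would split into the cases $H\not\equiv0$ and $H\equiv0$. In the case $H\not\equiv0$, the poles of $H$ can only arise from the (simple) zeros of $F$ and $G$ and from their common $1$-points, and the weight-$2$ hypothesis sharply limits the contribution of the $1$-points via the counting functions $\overline N^{E}_{2)}$ and $\overline N_{0}^{(3}$ of Definition~5. Feeding $m(r,H)=S(r,f)+S(r,g)$ and the resulting pole count of $H$ into the second fundamental theorem for $F$ and for $G$ at $0,1,\infty$, and using that $F,G$ are essentially entire so that $\overline N(r,\infty;F)=S(r,f)$, while every zero of $f$ forces a zero of $F$ of multiplicity $\ge n$ (hence is cheap in $N_2(r,0;F)$), I obtain an inequality whose right-hand side grows like a fixed multiple of $T(r,f)+T(r,g)$; comparison with the left-hand side $(n+2)\big(T(r,f)+T(r,g)\big)$ yields a contradiction once $n\ge7$. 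This case is the principal computational obstacle: making the numerology close at exactly the stated bound requires carefully balancing the truncated zero-counting of $F$, the weight-$2$ saving on the $1$-points, and the difference error terms.

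In the case $H\equiv0$, two integrations give a M\"obius relation
\be
F=\frac{(B+1)G+(A-B-1)}{BG+(A-B)},\qquad A\neq0,
\ee
for constants $A,B$. I would dispose of the non-trivial configurations of $(A,B)$ by a deficiency argument: since $F$ and $G$ are entire up to the small factor $\alpha$, the value $\infty$ is (almost) a Picard value, and the zeros of $F$ are highly multiple (multiplicity $\ge n$ at each zero of $f$), so $\delta(0,F)+\delta(\infty,F)$ is too large to be compatible with $F$ being a genuine M\"obius image of $G$ once $n\ge7$. This rules out $B\neq0$ and the affine case $B=0,\ A\neq1$, leaving only $B=0,\ A=1$, that is $F\equiv G$, equivalently
\be
f^{n}(f-1)f(z+c)\equiv g^{n}(g-1)g(z+c).
\ee

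It remains to pass from this identity to $f\equiv g$. Writing $h=f/g$ and cancelling the common factor $g^{n}g(z+c)$ turns the identity into $h^{n}(z)\,h(z+c)\,\big(h(z)g(z)-1\big)\equiv g(z)-1$. If $h$ were a non-constant meromorphic function, the finite-order difference estimates would make the two sides grow at incompatible rates, forcing a contradiction; hence $h\equiv t$ is constant. Substituting gives $t^{n+2}g-t^{n+1}\equiv g-1$, so that $t^{n+2}=1$ and $t^{n+1}=1$, whence $t=1$ and therefore $f\equiv g$, completing the proof.
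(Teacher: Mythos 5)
Your architecture coincides with the paper's (which proves the generalization with $P(f)$ in place of $f-1$): the same normalized $F$, $G$, the same auxiliary function $H$ from (\ref{e3.1}), the dichotomy $H\not\equiv0$ versus $H\equiv0$, the M\"obius analysis reducing to $F\equiv G$ with $FG\equiv1$ excluded by a two-deficient-values argument, and the final functional equation attacked via $h=f/g$. Two remarks before the main objection. First, the bound $n\ge 7$ is consumed entirely in the case $H\not\equiv0$, whose inequality you only assert; the paper's corresponding computation is a full page of bookkeeping with $N_2$, $N_{k+2}$, Lemma \ref{lem3.8} (which supplies the crucial $-N(r,0;L_c(f))$ term absorbing the zeros of the shift factor) and Lemma \ref{lem3.3b}, and that is exactly where $2k+m+6=7$ appears, so this step is a sketch rather than a proof. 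Second, your description of the poles of $H$ is inverted: $H$ is holomorphic at \emph{simple} zeros of $F$ and $G$; its poles come from multiple zeros, from $1$-points of unequal multiplicity, and from zeros of $F'$, $G'$ outside $F(F-1)$, $G(G-1)$.

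The genuine gap is the endgame. From $f^{n}(f-1)f(z+c)\equiv g^{n}(g-1)g(z+c)$ you write $h^{n}(z)h(z+c)\bigl(h(z)g(z)-1\bigr)\equiv g(z)-1$ and claim a non-constant $h$ makes ``the two sides grow at incompatible rates.'' Rearranged, the identity reads $g\cdot\bigl(h^{n+1}(z)h(z+c)-1\bigr)\equiv h^{n}(z)h(z+c)-1$, so $g$ is \emph{exactly} the quotient of the right-hand side by the bracket: the two sides balance identically, and no comparison of their characteristic functions can produce a contradiction (the product $g\cdot(h^{n+1}h(z+c)-1)$ has no reason to have characteristic $T(r,g)+T(r,h^{n+1}h(z+c)-1)$ precisely because of this cancellation). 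What is actually required is to use that $g$ is entire and transcendental: every zero of $h^{n+1}(z)h(z+c)-1$ must be matched by the numerator or by poles of $h$, and at a common zero where $h$ is finite and nonzero one gets $h=1$; this bounds $\overline N\bigl(r,1;h^{n+1}(z)h(z+c)\bigr)$ by $\overline N(r,0;h)+\overline N(r,\infty;h)+\overline N(r,1;h)$ plus shifted terms, and the second fundamental theorem applied to $h^{n+1}(z)h(z+c)$ then yields the contradiction --- an argument with its own numerology, not a growth count. Tellingly, the paper does not close this case in its general setting: when $h$ is non-constant it retreats to the alternative conclusion that $f,g$ satisfy $R(w_1,w_2)=0$, and only the constant-$h$ branch (via the cited argument of Xu--Liu--Cao) yields $f\equiv tg$. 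So the step you wave at is precisely the one separating the clean conclusion $f\equiv g$ of Theorem F from the weaker disjunction, and it needs to be supplied.
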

\begin{theoG}\cite{Meng & 2014}
	Let $f$ and $g$ be two transcendental entire functions of finite order, and $\alpha(z)(\not\equiv 0, \infty)$  be a small function with respect to both $f$ and $g$. Suppose that $c$ is a nonzero complex constant and $n\geq10$ is an integer. If $f^n(z)(f(z)-1)f (z+c)$ and $g^n(z)(g(z)-1)g(z+c)$ share $(\alpha(z), 2)^*$, then $f(z)\equiv g(z)$.
\end{theoG}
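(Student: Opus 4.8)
The plan is to convert the hypothesis into a value-sharing problem. Put
\[
F=\frac{f^{n}(f-1)f(z+c)}{\alpha},\qquad G=\frac{g^{n}(g-1)g(z+c)}{\alpha},
\]
so that $F$ and $G$ are quotients of entire functions by a small function and, by hypothesis, share $(1,2)^{*}$. Since $f$ and $g$ have finite order, the difference analogue of the lemma on the logarithmic derivative gives $m\!\left(r,\frac{f(z+c)}{f(z)}\right)=S(r,f)$, whence $T(r,f(z+c))=T(r,f)+S(r,f)$; combined with the elementary estimate for $f^{n}(f-1)$ this yields $T(r,F)=(n+2)T(r,f)+S(r,f)$ and, symmetrically, $T(r,G)=(n+2)T(r,g)+S(r,g)$. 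Because $\alpha$ is small, every Nevanlinna quantity attached to $F$ or $G$ can be rewritten, up to $S$-terms, in terms of $f$ or $g$; in particular $\overline N(r,0;F)\le 3T(r,f)+S(r,f)$ and $\overline N(r,\infty;F)=S(r,f)$.

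Next I would form the standard auxiliary function
\[
H=\left(\frac{F''}{F'}-\frac{2F'}{F-1}\right)-\left(\frac{G''}{G'}-\frac{2G'}{G-1}\right),
\]
and prove $H\equiv 0$. The mechanism is that at a common simple $1$-point of $F$ and $G$ each bracket has a simple pole of residue $2$, so these cancel and the point is not a pole of $H$; the relaxed sharing $(1,2)^{*}$ ensures that the $1$-points of mismatched multiplicities up to $2$ contribute only $S(r)$, while $H$ itself has at most simple poles. Assuming $H\not\equiv 0$, I would bound $\overline N(r,1;F)$ by $N(r,0;H)$ together with the zeros of $F'$ and $G'$ lying off the $1$-points, insert this into the second main theorem applied to $F$ and to $G$ with the three targets $0,1,\infty$, and arrive after symmetrisation at an inequality
\[
(n+2)\left(T(r,f)+T(r,g)\right)\le \Lambda\left(T(r,f)+T(r,g)\right)+S(r),
\]
where $\Lambda$ is a fixed numerical constant assembled from the counting bounds. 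Tracking these coefficients shows the inequality fails as soon as $n\ge 10$, a contradiction, so $H\equiv 0$.

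Integrating $H\equiv 0$ twice produces the M\"obius relation
\[
F=\frac{(b+1)G+(a-b-1)}{bG+(a-b)}
\]
with constants $a(\neq0)$ and $b$. I would then analyse the three cases $b\neq0,\ a\neq b$; $\ b\neq0,\ a=b$; and $b=0$. Since $F$ and $G$ have essentially no poles, the pole divisor of the right-hand side must also be negligible, and feeding this into the second main theorem kills every case except $a=1,\ b=0$, which gives $F\equiv G$. The one genuinely separate possibility, the reciprocal relation $FG\equiv1$, amounts to
\[
f^{n}(f-1)f(z+c)\,g^{n}(g-1)g(z+c)\equiv\alpha^{2};
\]
here every zero of $f$ forces a zero of order at least $n$ on the left, so $n\,\overline N(r,0;f)\le 2N(r,0;\alpha)=S(r,f)$, and likewise $\overline N(r,1;f)=S(r,f)$, whence the second main theorem gives $T(r,f)=S(r,f)$, contradicting transcendence. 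Thus only $F\equiv G$ survives.

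It remains to pass from $F\equiv G$, that is
\[
f^{n}(f-1)f(z+c)\equiv g^{n}(g-1)g(z+c),
\]
to $f\equiv g$. Comparing the multiplicities of the zeros contributed by $f^{n}$ and $g^{n}$ shows that $f$ and $g$ have the same zeros with the same multiplicities, so $f/g=e^{\gamma}$ for a polynomial $\gamma$; substituting $f=e^{\gamma}g$ back into the identity and comparing the two sides rules out a nonconstant $\gamma$, so $f=tg$ for a nonzero constant $t$, and then $t^{n+2}g-t^{n+1}\equiv g-1$ forces $t^{n+2}=t^{n+1}=1$, hence $t=1$ and $f\equiv g$. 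The main obstacle is the step $H\equiv 0$: because relaxed weighted sharing is strictly weaker than weakly weighted sharing, the $1$-points of unequal or high multiplicity are controlled only in an averaged way, and it is the careful bookkeeping of these weaker counting estimates---rather than any single clever identity---that both drives the argument and raises the threshold from $n\ge 7$ (Theorem F) to $n\ge 10$.
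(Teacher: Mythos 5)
Your overall architecture---the auxiliary function $H$, the dichotomy $H\not\equiv 0$ versus $H\equiv 0$, the second main theorem in the first case, the M\"obius relation with the exclusion of $FG\equiv 1$ in the second---is the same route the paper takes (it obtains this statement as the case $k=0$, $m=1$, $c_0=0$ of Theorem \ref{t2}). But two steps are genuinely incomplete or wrong. First, in Case 1 everything hinges on the sentence ``tracking these coefficients shows the inequality fails as soon as $n\ge 10$''; that tracking \emph{is} the proof. Under $(1,2)^*$ sharing only the mismatched $1$-points with both multiplicities at most $2$ are negligible; the pairs in which one multiplicity is at least $3$ survive, and they must be absorbed through the specific estimates of Lemmas \ref{lem3.10}, \ref{lem3.14} and \ref{lem3.15}, which is exactly what produces the extra term $\ol N(r,0;F)\le (k+m+1)T(r,f)+T(r,f(z+c))$ on the right-hand side of (\ref{e4.13}) and pushes the threshold from $7$ (weak weighted sharing, Theorem F) up to $10$. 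Your sketch never performs this absorption, so the claim that $n\ge 10$ suffices is unverified precisely at the point where the theorem's constant is decided. (A smaller imprecision in the same case: at a common simple $1$-point one needs $H$ to \emph{vanish}, not merely to be regular, in order to bound $N(r,1;F\mid\le 1)$ by $N(r,0;H)\le N(r,\infty;H)+S(r)$; the constant terms of the two brackets cancel, which is what makes Lemma \ref{lem3.14} work.)

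Second, the final step is broken as written. From $f^{n}(f-1)f(z+c)\equiv g^{n}(g-1)g(z+c)$ you cannot conclude that $f$ and $g$ have the same zeros with the same multiplicities: a zero of $f$ at $z_0$ forces a zero of the right-hand side at $z_0$, but that zero may be supplied by $g-1$ or by $g(z+c)$ (i.e.\ by a zero of $g$ at $z_0+c$) rather than by $g$ itself, so the step ``$f/g=e^{\gamma}$'' does not follow. The paper instead substitutes $h=f/g$ directly into the identity, rewrites it as a polynomial relation in $g$ with coefficients $a_j(h^{n+j}h(z+c)-1)$, and splits into $h$ constant (where $t^{n+1}=t^{n+2}=1$ forces $t=1$) and $h$ non-constant. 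Your exclusion of $FG\equiv 1$ (zeros of $f$ and of $f-1$ must all be zeros of $\alpha^2$, then the second main theorem) and your treatment of the degenerate M\"obius cases are sound, and your identity $T(r,F)=(n+2)T(r,f)+S(r,f)$ is in fact justifiable for the pure shift via Chiang--Feng, but the two gaps above mean the proposal does not yet constitute a proof.
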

\begin{theoH}\cite{Meng & 2014}
	Let $f(z)$ and $g(z)$ be two transcendental entire functions of finite order, and $\alpha(z)(\not\equiv 0, \infty)$ be a small function with respect to both f $(z)$ and $g(z)$. Suppose that $c$ is a nonzero complex constant and $n\geq16$ is an integer. If $\ol E_{2)}(\alpha(z), f^n(z)(f(z)-1)f(z+c))=\ol E_{2)}(\alpha(z), g^n(z)(g(z)-1)g(z+c))$, then $f(z)\equiv g(z)$.
\end{theoH}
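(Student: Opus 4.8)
The plan is to reduce the weak truncated sharing to an ordinary value-$1$ sharing problem and then run the Nevanlinna auxiliary-function dichotomy. Write $F = f^n(f-1)\,f(z+c)$ and $G = g^n(g-1)\,g(z+c)$. Since $f$ and $g$ are of finite order, the shift obeys $T(r,f(z+c)) = T(r,f)+S(r,f)$, so by the Valiron--Mohon'ko theorem one obtains the decisive degree estimate $T(r,F)=(n+2)T(r,f)+S(r,f)$ and likewise $T(r,G)=(n+2)T(r,g)+S(r,g)$. I would then put $\mathcal F = F/\alpha$ and $\mathcal G = G/\alpha$; because $\alpha$ is a small function, its zeros and poles only contribute $S(r)$, and the hypothesis $\ol E_{2)}(\alpha,F)=\ol E_{2)}(\alpha,G)$ becomes the statement that $\mathcal F$ and $\mathcal G$ share the value $1$ in the sense of $\ol E_{2)}$, i.e. their $1$-points of multiplicity at most $2$ coincide as point sets. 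Note also that, since $f,g$ are entire, $F,G$ have no finite poles, so $\ol N(r,\infty;\mathcal F)=S(r)$.

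Next I introduce the standard auxiliary function
\[
H = \left( \frac{\mathcal F''}{\mathcal F'} - \frac{2\mathcal F'}{\mathcal F - 1} \right) - \left( \frac{\mathcal G''}{\mathcal G'} - \frac{2\mathcal G'}{\mathcal G - 1} \right),
\]
and split into two cases. If $H \not\equiv 0$, the lemma on the logarithmic derivative gives $m(r,H)=S(r)$, and every common simple or double $1$-point of $\mathcal F,\mathcal G$ is a zero of $H$, so the sharing hypothesis yields a lower bound for $N(r,0;H)$ in terms of the shared $\alpha$-points. Counting the poles of $H$ (which lie only over the zeros of $f$, $f-1$, $f(z+c)$ and the analogous $g$-data, together with the points where the multiplicities of $\mathcal F$ and $\mathcal G$ disagree), and feeding the resulting bound into the Second Main Theorem for $\mathcal F$ and for $\mathcal G$, produces after addition an inequality of the form
\[
(n+2)\{T(r,f) + T(r,g)\} \le \lambda\,\{T(r,f) + T(r,g)\} + S(r),
\]
where the explicit constant $\lambda$ is independent of $n$. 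Tracking the constants shows $\lambda < n+2$ precisely when $n \ge 16$, which is a contradiction; hence $H \equiv 0$.

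When $H \equiv 0$ I integrate twice to obtain a Möbius relation
\[
\frac{1}{\mathcal F - 1} = \frac{A}{\mathcal G - 1} + B, \qquad A \ne 0,
\]
and examine zeros and poles of both sides. Using that $F,G$ have no poles and that the factors $f^n$ and $g^n$ force zero sets of multiplicity $\ge n$, the degenerate configurations ($B\ne0$, and the reciprocal-type branch $\mathcal F\mathcal G\equiv1$) are eliminated by the same counting estimates, which again fail unless $n$ is small; this leaves $B=0$, $A=1$, hence $\mathcal F \equiv \mathcal G$, i.e. $F \equiv G$. Finally I invoke rigidity of the difference polynomial: from $f^n(f-1)f(z+c)\equiv g^n(g-1)g(z+c)$, a Nevanlinna argument shows $h=f/g$ must be constant, and then substituting $f=hg$ and matching the coefficients of $g$ gives $h^{\,n+2}=h^{\,n+1}=1$, so $h=1$ and $f\equiv g$.

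The main obstacle will be the bookkeeping in the case $H \not\equiv 0$: because $\ol E_{2)}$ is an ignoring-multiplicity, truncated-at-$2$ sharing, the $1$-points of multiplicity $\ge 3$ and the points where $\mathcal F,\mathcal G$ carry unequal multiplicities are \emph{not} governed by the hypothesis and must be routed into the pole-count of $H$ through the truncated counting functions $\ol N_L$, $\ol N_E^{(3}$ and $\ol N_{f\ge 3}$ of the Definitions above. Combining these so that the aggregate coefficient $\lambda$ drops strictly below $n+2$ is exactly what fixes the threshold $n \ge 16$, and this sharp, calculation-heavy estimate is the delicate heart of the argument.
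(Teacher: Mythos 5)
First, a point of orientation: the paper does not actually prove Theorem H; it is quoted from Meng (2014) as background, and the paper's own proofs are of the generalizations Theorems \ref{t1}--\ref{t3}, which follow exactly the template you describe. Your architecture --- normalize by $\alpha$, form the auxiliary function $H$, dichotomize on $H\equiv 0$, route the uncontrolled $1$-points of multiplicity $\geq 3$ and the unequal-multiplicity points into the pole count of $H$ via $\ol N_L$ and $\ol N_{F\geq 3}$, feed the result into the second fundamental theorem, and in the degenerate case reduce to $F\equiv G$ or $FG\equiv 1$ --- is the same as the paper's. But there are concrete gaps. (i) Your claim that ``every common simple or double $1$-point of $\mathcal F,\mathcal G$ is a zero of $H$'' is false for double points: where $\mathcal F-1$ and $\mathcal G-1$ both vanish to order $2$, the two simple poles of the bracketed expressions cancel, so $H$ is holomorphic there but need not vanish. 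The correct statement, which is what the paper uses (Lemma \ref{lem3.13}), is $N(r,1;F\mid\leq 1)\leq N(r,0;H)$, capturing only the multiplicity-one $1$-points; the double $1$-points must then be absorbed elsewhere in the bookkeeping, and since that bookkeeping is precisely what produces the threshold $n\geq 16$, this is not cosmetic. (ii) The whole quantitative content --- ``tracking the constants shows $\lambda<n+2$ precisely when $n\geq 16$'' --- is asserted, not carried out; moreover $T(r,F)=(n+2)T(r,f)+S(r,f)$ does not follow from Valiron--Mohon'ko alone, since $f(z+c)$ is not a rational function of $f(z)$: the lower bound needs Chiang--Feng's estimate for $m\left(r,f(z)/f(z+c)\right)$, and the paper deliberately works only with the one-sided inequality of Lemma \ref{lem3.8}.

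(iii) The most serious gap is the last step. From $f^n(f-1)f(z+c)\equiv g^n(g-1)g(z+c)$ you assert that ``a Nevanlinna argument shows $h=f/g$ must be constant.'' This is exactly where the conclusion $f\equiv g$, rather than a mere algebraic dependence between $f$ and $g$, is decided; in the paper's generalized theorems the non-constant $h$ case is \emph{not} eliminated but is recorded as the alternative conclusion $R(w_1,w_2)=0$. Excluding non-constant $h$ for the specific factor $P(z)=z-1$ requires a genuine argument (this is where Zhang's and Meng's proofs do real work), and you give no indication of one. Until (i) and (iii) are repaired and the constant-tracking in (ii) is actually performed, what you have is a correct outline of the standard strategy rather than a proof.
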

In 2018, Sahoo and Biswas \cite{Sahoo & Biswas & 2018} further extended Theorems E--H in the following.
\begin{theoI}\cite{Sahoo & Biswas & 2018}
	Let $f(z)$ and $g(z)$ be two transcendental entire functions of finite order, and $\alpha(z)(\not\equiv 0, \infty)$ be a small function with respect to both $f(z)$ and $g(z)$ with finitely many zeros. Suppose that $c$ is
	a non-zero complex constant, $n$, $k(\geq 0)$ and $m(\geq 1)$ are integers such that $n\geq 2k+m+6$. If $(f^n(z)(f^m(z)-1)f(z+c))^{(k)}$ and $(g^n(z)(g^m(z)-1)g(z+c))^{(k)}$ share $``(\alpha(z),2)"$, then $f(z)\equiv tg(z)$, where $t^m=1$.
\end{theoI}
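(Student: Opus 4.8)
The plan is to run the standard Lahiri--Yi auxiliary-function machinery, with the shift handled by the finite-order difference calculus of Chiang--Feng and Halburd--Korhonen. First I would set $F=f^{n}(f^{m}-1)f(z+c)$, $G=g^{n}(g^{m}-1)g(z+c)$ and $\mathcal F=F^{(k)}/\alpha$, $\mathcal G=G^{(k)}/\alpha$. Since $\alpha$ is a small function with finitely many zeros, $F^{(k)}$ and $G^{(k)}$ sharing ``$(\alpha,2)$'' becomes, modulo $S(r)$, the statement that $\mathcal F$ and $\mathcal G$ share ``$(1,2)$'', and $\mathcal F,\mathcal G$ have only finitely many poles (at the zeros of $\alpha$). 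Because $f$ has finite order, $T(r,f(z+c))=T(r,f)+S(r,f)$ and $m(r,f(z+c)/f)=S(r,f)$, so that $T(r,F)=(n+m+1)T(r,f)+S(r,f)$; as $F$ is entire, $T(r,\mathcal F)=T(r,F^{(k)})+S(r,f)=T(r,F)+S(r,f)=(n+m+1)T(r,f)+S(r,f)$, and symmetrically for $G,\mathcal G$.

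The heart of the argument is the dichotomy for
\be H=\left(\frac{\mathcal F''}{\mathcal F'}-\frac{2\mathcal F'}{\mathcal F-1}\right)-\left(\frac{\mathcal G''}{\mathcal G'}-\frac{2\mathcal G'}{\mathcal G-1}\right).\ee
Suppose first $H\not\equiv0$. Then every simple common $1$-point of $\mathcal F$ and $\mathcal G$ is a zero of $H$, while under weight-$2$ sharing the poles of $H$ are simple and confined to the multiple zeros of $\mathcal F,\mathcal G$ and to the $L$-type and higher-multiplicity shared $1$-points recorded by the reduced counting functions introduced above. Bounding $N(r,\infty;H)$ by those quantities and inserting it into the second main theorem for $\mathcal F$ and $\mathcal G$ would give an upper bound for $T(r,\mathcal F)+T(r,\mathcal G)$ in terms of $N_{2}(r,0;\mathcal F)+N_{2}(r,0;\mathcal G)$, reduced shared-value terms, and $S(r)$. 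Since $F,G$ are entire all pole contributions drop, and the differentiated-zero inequality $N_{2}(r,0;F^{(k)})\le N_{k+2}(r,0;F)+S(r)$ reduces the right-hand side to the zero-counting of $f$, $f^{m}-1$ and $f(z+c)$ (and likewise for $g$). Comparing with $T(r,\mathcal F)=(n+m+1)T(r,f)+S(r,f)$ and carrying out the accounting --- where the savings built into weight-$2$ sharing are essential to reach the sharp constant --- forces $T(r,f)+T(r,g)=S(r,f)+S(r,g)$ as soon as $n\ge 2k+m+6$, which is absurd. Hence $H\equiv0$.

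Integrating $H\equiv0$ twice produces a bilinear relation
\be \frac{1}{\mathcal F-1}=\frac{A}{\mathcal G-1}+B,\qquad A(\ne0),\ B\ \text{constants}.\ee
I would then show $A=1,B=0$. In any other case $\mathcal F$ obeys a non-identity linear-fractional relation with $\mathcal G$, so some finite value is taken by $\mathcal F$ only over the zeros of $\alpha$ (hence with reduced counting $S(r)$) or is tied to the zeros of $\mathcal G$; applying the second main theorem to $\mathcal F$ with a suitable triple of targets then bounds $T(r,\mathcal F)$ by $\ol N(r,0;\mathcal F)+\ol N(r,0;\mathcal G)+S(r)$, and these reduced zero-counts are too small to sustain $T(r,\mathcal F)=(n+m+1)T(r,f)+S(r,f)$ once $n\ge 2k+m+6$. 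Thus $\mathcal F\equiv\mathcal G$, i.e.\ $F^{(k)}\equiv G^{(k)}$. Integrating $k$ times gives $F=G+Q$ with $\deg Q\le k-1$; if $Q\not\equiv0$, the second main theorem for $F$ with targets $0,\infty,Q$ (note $F=Q\Leftrightarrow G=0$ and $\ol N(r,\infty;F)=0$) yields $T(r,F)\le\ol N(r,0;F)+\ol N(r,0;G)+S(r)$, again contradicting $n\ge 2k+m+6$. Hence $Q\equiv0$ and
\be f^{n}(f^{m}-1)f(z+c)\equiv g^{n}(g^{m}-1)g(z+c).\ee

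Finally I would extract the conclusion from this identity. Comparing the zeros of the two sides and using that the exponent $n$ dominates $m$ and $k$, one shows that $h:=f/g$ can be neither transcendental nor a non-constant rational function, hence is a constant. Substituting $f=hg$ and cancelling the common factor $g^{n}g(z+c)$ reduces the identity to $(h^{\,n+m+1}-1)g^{m}=h^{\,n+1}-1$; since $g$ is non-constant this forces $h^{\,n+m+1}=1$ and $h^{\,n+1}=1$, so $h^{m}=1$, and writing $t=h$ gives $f\equiv tg$ with $t^{m}=1$. The step I expect to be the main obstacle is the case $H\not\equiv0$: one must control simultaneously how the $k$-fold differentiation and the shift $z\mapsto z+c$ affect $N_{2}(r,0;\mathcal F)$ and the ramification terms, because it is precisely this joint bookkeeping, sharpened by the weight-$2$ hypothesis, that yields the threshold $n\ge 2k+m+6$; by comparison the elimination of the bilinear cases and the final descent to $f\equiv tg$ are routine, apart from the care the difference term demands.
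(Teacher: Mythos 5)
Your overall architecture --- the auxiliary function $H$, the second-main-theorem accounting with $N_{k+2}$ and the Chiang--Feng shift estimates when $H\not\equiv 0$, and the Möbius degeneration when $H\equiv 0$ --- is exactly the route the paper takes (in its proof of Theorem 3.1/\ref{t1}, of which Theorem I is the special case $P(z)=z^m-1$, $c_0=0$). But two steps you wave through are precisely where the real work, or a genuine obstruction, lies. First, in the $H\equiv 0$ analysis the bilinear relation can also degenerate to $\mathcal F\mathcal G\equiv 1$, and there the second main theorem with ``a suitable triple of targets'' gives nothing: the only targets of $\mathcal F$ with small counting function are $0$ and $\infty$ (both controlled by the finitely many zeros of $\alpha$), while any third target $a$ satisfies only $\ol N(r,a;\mathcal F)=\ol N(r,1/a;\mathcal G)\le T(r,\mathcal G)+S(r)$, so SMT returns the vacuous $T(r,\mathcal F)\le T(r,\mathcal G)+S(r)$. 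The paper excludes this subcase by a different mechanism: from $F^{(k)}G^{(k)}\equiv\alpha^2$ and the entirety of $f,g$ it reads off that $f$ omits, up to a set of points with counting function $S(r,f)$, both $0$ and a root of $z^m-1$, contradicting Picard. You need that argument (or an equivalent), not SMT.

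Second, and more seriously: after reaching $f^n(f^m-1)f(z+c)\equiv g^n(g^m-1)g(z+c)$ you assert that ``comparing the zeros of the two sides'' shows $h=f/g$ is constant. No such comparison is carried out, and this is exactly the step the paper does not accomplish in general --- its Theorem \ref{t1} therefore retains the additional alternative conclusion that $f,g$ satisfy the algebraic equation $R(w_1,w_2)=w_1^{n}P(w_1)L_c(w_1)-w_2^{n}P(w_2)L_c(w_2)=0$, and even the constant-$h$ case is settled only by quoting the argument of Xu--Liu--Cao. Your computation $(h^{n+m+1}-1)g^m=h^{n+1}-1$ is correct once $h$ is constant, but without an argument excluding non-constant $h$ your outline proves at best the two-alternative conclusion, not the clean $f\equiv tg$. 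A smaller technical point: you use $T(r,F^{(k)})=T(r,F)+S(r,f)$ as an equality, whereas only ``$\le$'' comes for free; the lower-bound information the accounting actually needs is supplied in the paper by the Zhang--Yang inequality $N_p(r,0;F^{(k)})\le T(r,F^{(k)})-T(r,F)+N_{p+k}(r,0;F)+S(r,F)$, which should be invoked explicitly rather than absorbed into an unproved identity for characteristic functions.
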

\begin{theoJ}\cite{Sahoo & Biswas & 2018}
	Let $f(z)$ and $g(z)$ be two transcendental entire functions of finite order, and $\alpha(z)(\not\equiv 0, \infty)$ be a small function with respect to both $f(z)$ and $g(z)$ with finitely many zeros. Suppose that $c$ is
	a non-zero complex constant, $n$, $k(\geq 0)$ and $m(\geq 1)$ are integers such that $n\geq 3k+2m+8$. If $(f^n(z)(f^m(z)-1)f(z+c))^{(k)}$ and $(g^n(z)(g^m(z)-1)g(z+c))^{(k)}$ share $(\alpha(z),2)^*$, then $f(z)\equiv tg(z)$, where $t^m=1$.
\end{theoJ}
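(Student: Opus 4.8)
The plan is to follow the standard three-stage strategy for uniqueness problems of this type: first reduce the shared-function hypothesis to a normalized sharing statement about two auxiliary functions, then use a Lahiri-type auxiliary differential function together with the second main theorem to force an algebraic (M\"obius) relation between them, and finally unwind that relation back to $f$ and $g$. Concretely I would set $\Phi=f^n(f^m-1)f(z+c)$ and $\Psi=g^n(g^m-1)g(z+c)$, and define the normalized functions $F=\Phi^{(k)}/\alpha$ and $G=\Psi^{(k)}/\alpha$. Since $f,g$ are entire, $\Phi,\Psi$ and all their derivatives are entire, so $F$ and $G$ are meromorphic with poles only at the finitely many zeros of $\alpha$; hence $\ol N(r,\infty;F)$ and $\ol N(r,\infty;G)$ are $O(\log r)=S(r,f)+S(r,g)$. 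The hypothesis that the two $k$-th derivatives share $(\alpha(z),2)^*$ says exactly that $F$ and $G$ share $1$ ``IM'' with the relaxed weight-$2$ control $\sum_{p,q\le 2}N(r,1;F\mid=p;G\mid=q)=S(r)$ for $p\neq q$. Here I would record the growth bookkeeping from finite-order difference Nevanlinna theory: the shift invariance $T(r,f(z+c))=T(r,f)+S(r,f)$, the difference logarithmic-derivative estimate $m\big(r,\tfrac{f(z+c)}{f}\big)=S(r,f)$, and Clunie/Milloux-type bounds ensuring that $T(r,F)$ and $T(r,G)$ are comparable to $(n+m+1)T(r,f)$ and $(n+m+1)T(r,g)$ respectively, up to bounded and $k$-dependent adjustments. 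These identities convert every later counting estimate into a statement about $T(r,f)$ and $T(r,g)$.

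The engine of the proof is the Lahiri auxiliary function
\[
H=\left(\frac{F''}{F'}-\frac{2F'}{F-1}\right)-\left(\frac{G''}{G'}-\frac{2G'}{G-1}\right).
\]
I would assume $H\not\equiv 0$ and bound the common $1$-points of $F$ and $G$ by the poles of $H$, noting that $H$ has at most simple poles at the points where $F$ and $G$ meet $1$ with unequal multiplicities and at the ramification points recorded by the truncated terms $\ol N(r,1;F\mid\ge 2)$ and $\ol N(r,1;G\mid\ge 2)$. Feeding this into the second main theorem for $F$ and $G$, together with the relaxed-sharing input $\sum_{p,q\le 2}N(\cdots)=S(r)$ and the growth identities above, yields an inequality whose dominant terms are governed by the coefficient $n$. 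The entire point of the hypothesis $n\ge 3k+2m+8$ is to make the resulting coefficient of $T(r,f)+T(r,g)$ strictly negative, producing a contradiction. This counting bookkeeping---keeping careful track of how the relaxed (as opposed to weakly weighted) sharing weakens the control on the split $1$-points, which is precisely what inflates the threshold from $2k+m+6$ in Theorem I to $3k+2m+8$ here---is the main obstacle and the most delicate part of the argument.

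Once $H\equiv 0$ is established, integrating twice gives a relation of the form $\frac{1}{F-1}=\frac{bG+a-b}{G-1}$ with constants $a\neq 0$. I would split into the usual cases $b\neq 0,-1$, then $b=-1$, then $b=0$; in each nontrivial case one compares the zero and pole structure of $F$ and $G$ (again using that the poles are confined to the zeros of $\alpha$, together with the growth identities) to contradict the degree bound, leaving only $F\equiv G$, i.e.\ $\Phi^{(k)}\equiv\Psi^{(k)}$. Integrating $k$ times produces $\Phi=\Psi+Q$ with $Q$ a polynomial of degree less than $k$; since $T(r,Q)=O(\log r)=S(r,f)$ while $T(r,\Phi)$ grows linearly in $T(r,f)$, a growth comparison forces $Q\equiv 0$, so that $f^n(f^m-1)f(z+c)\equiv g^n(g^m-1)g(z+c)$.

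Finally I would set $h=f/g$; substituting into the last identity and dividing by $g^n\,g(z+c)$ reduces it to an expression for $g^m$ as a fixed rational combination of $h$ and $h(z+c)$, and a characteristic-function argument (a nonconstant $h$ would make the right-hand side a small function, contradicting $T(r,g^m)\sim m\,T(r,g)$) shows $h$ must be a constant $t$. Writing $f\equiv tg$ and comparing coefficients in the resulting identity $t^{n+1}(t^mg^m-1)\equiv g^m-1$ then gives $t^{n+m+1}=1$ and $t^{n+1}=1$, whence $t^m=1$, completing the proof.
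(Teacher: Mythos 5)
Your overall architecture matches the paper's proof of its Theorem \ref{t2} (which specializes to this statement when $c_0=0$ and $P(z)=z^m-1$): the same normalization $F=\Phi^{(k)}/\alpha$, $G=\Psi^{(k)}/\alpha$, the same auxiliary function $H$, and the same split into $H\not\equiv 0$ (second main theorem plus counting) and $H\equiv 0$ (M\"obius relation). But two steps as you state them would fail. First, after $\Phi^{(k)}\equiv\Psi^{(k)}$ you integrate to get $\Phi=\Psi+Q$ and claim that ``a growth comparison forces $Q\equiv 0$.'' It does not: $\Phi-\Psi=Q$ with $Q$ small is perfectly consistent with both $\Phi$ and $\Psi$ being large, so no contradiction follows from characteristic functions alone. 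The paper instead divides by $Q$, rewrites the identity as $\Phi/Q=\Psi/Q+1$, and applies the second fundamental theorem to $\Phi/Q$ at $0,\infty,1$ (the $1$-points of $\Phi/Q$ being zeros of $\Psi/Q$), which is what actually yields $(n-m-3)(T(r,f)+T(r,g))\leq S(r,f)+S(r,g)$ and hence $Q\equiv 0$.

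Second, your exclusion of nonconstant $h=f/g$ is unjustified. From $f^n(f^m-1)f(z+c)\equiv g^n(g^m-1)g(z+c)$ one gets $g^m=\dfrac{h^nh(z+c)-1}{h^{n+m}h(z+c)-1}$, but the right-hand side is a rational expression in $h$ and $h(z+c)$, and $T(r,h)$ can be comparable to $T(r,g)$ (one only knows $T(r,h)\leq T(r,f)+T(r,g)+O(1)$), so it need not be a small function of $g$ and no contradiction with $T(r,g^m)=mT(r,g)+O(1)$ follows. This is exactly why the paper's own theorems carry the alternative conclusion that $f$ and $g$ satisfy $R(w_1,w_2)=0$ rather than asserting $f\equiv tg$ outright; getting $t^m=1$ for $P(z)=z^m-1$ requires a genuine further argument (e.g.\ analyzing multiplicities of the zeros of $h^{n+m}h(z+c)-1$ against those of $h^nh(z+c)-1$), not a growth estimate. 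Finally, note that the entire content of the threshold $n\geq 3k+2m+8$ lives in the Case 1 counting you leave as a black box --- the bound on $\ol N(r,1;F)$ via $N(r,\infty;H)$, the $(1,2)^*$ control of the split $1$-points, and the $N_{k+2}$ versus $N_2$ bookkeeping for $k$-th derivatives --- so a writeup that does not execute that computation has not actually used the hypothesis on $n$.
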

\begin{theoK}\cite{Sahoo & Biswas & 2018}
	Let $f(z)$ and $g(z)$ be two transcendental entire functions of finite order, and $\alpha(z)(\not\equiv 0, \infty)$ be a small function with respect to both $f(z)$ and $g(z)$ with finitely many zeros. Suppose that $c$ is
a non-zero complex constant, $n$, $k(\geq 0)$ and $m(\geq 1)$ are integers such that $n\geq 5k+4m+12$. If $\ol E_{2)}(\alpha(z), (f^n(z)(f^m(z)-1)f(z+c))^{(k)})=\ol E_{2)}(\alpha(z), (g^n(z)(g^m(z)-1)g(z+c))^{(k)})$, then $f(z)\equiv tg(z)$, where $t^m=1$.
\end{theoK}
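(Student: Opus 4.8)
The plan is to reduce the very weak $\ol E_{2)}$-sharing hypothesis to the machinery already underlying Theorem I, paying for the weakening with the larger lower bound $n\geq 5k+4m+12$. Throughout I write $\mathcal F=f^n(f^m-1)f(z+c)$, $\mathcal G=g^n(g^m-1)g(z+c)$, and $F=\mathcal F^{(k)}$, $G=\mathcal G^{(k)}$. Since $f$ is entire of finite order, the difference analogue of the lemma on the logarithmic derivative gives $T(r,f(z+c))=T(r,f)+S(r,f)$ and $m\big(r,F/\mathcal F\big)=S(r,f)$, so by the Valiron--Mokhon'ko lemma $T(r,\mathcal F)=(n+m+1)T(r,f)+S(r,f)$; because $f$, and hence $\mathcal F$, are entire we have $\ol N(r,\infty;F)=S(r,f)$, and likewise for $G$. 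As $\alpha$ is small with only finitely many zeros, dividing through by $\alpha$ costs only $S(r,f)$ and converts ``sharing $\alpha$'' into the cleaner ``sharing $1$'' for $F/\alpha$ and $G/\alpha$; I keep $\alpha$ visible but treat every $\alpha^{(j)}$ as an $S$-term. First I would record the Milloux-type bounds $N_k(r,0;F)\leq N_{k+1}(r,0;\mathcal F)+k\,\ol N(r,\infty;\mathcal F)+S(r,f)$ that let the derivative order $k$ enter the final tally linearly.

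Next I would translate the hypothesis $\ol E_{2)}(\alpha,F)=\ol E_{2)}(\alpha,G)$ into counting-function inequalities. This sharing identifies only the \emph{locations} of the $\alpha$-points of multiplicity at most $2$; it says nothing about matching the multiplicities there, and nothing at all about the points of multiplicity $\geq 3$. Hence I would bound $\ol N(r,\alpha;F\mid\leq 2)$ by the common reduced counting function plus an error, and control the high-multiplicity part through $\ol N(r,\alpha;F\mid\geq 3)\leq \tfrac13 N(r,\alpha;F)\leq\tfrac13 T(r,F)+S(r,f)$. It is precisely this coarse ``$\geq 3$'' bookkeeping — where weakly weighted sharing (Theorem I) would instead give control starting from multiplicity $3$, and relaxed sharing (Theorem J) from multiplicity $2$ — that inflates the admissible coefficient from $2k+m+6$ up to $5k+4m+12$.

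Now I would introduce the standard auxiliary function
\beas H=\left(\frac{F''}{F'}-\frac{2F'}{F-\alpha}\right)-\left(\frac{G''}{G'}-\frac{2G'}{G-\alpha}\right),\eeas
and argue the dichotomy $H\equiv 0$ versus $H\not\equiv 0$. If $H\not\equiv 0$, then every common simple $\alpha$-point of $F$ and $G$ is a zero of $H$, while $m(r,H)=S(r,f)+S(r,g)$ and the poles of $H$ are controlled by the reduced counting functions above. Combining this with two applications of the second main theorem (to $F$ and to $G$, using the targets $0,\alpha,\infty$ and the Milloux bounds) produces an inequality whose left side grows like $(n+m+1)\{T(r,f)+T(r,g)\}$ while its right side is dominated by a term with coefficient at most $5k+4m+12$ times $\{T(r,f)+T(r,g)\}$; once $n\geq 5k+4m+12$ this is a contradiction. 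The careful tracking of these constants is exactly the step that verifies the stated threshold. Hence $H\equiv 0$.

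Finally, integrating $H\equiv 0$ twice gives a Möbius relation $\tfrac{1}{F-\alpha}=\tfrac{A}{G-\alpha}+B$ with constants $A\,(\neq 0),B$, which splits into the two classical subcases. In the subcase forcing $\mathcal F^{(k)}\mathcal G^{(k)}\equiv\alpha^2$, I would count zeros: a zero of $f$ of multiplicity $p$ produces a zero of $\mathcal F^{(k)}$ of multiplicity $\geq np-k$, which cannot be balanced against $\alpha^2$ once $n$ is large, so $\ol N(r,0;f)=S(r,f)$ and similarly for $g$; a growth comparison then rules this subcase out. In the remaining subcase one obtains $F\equiv G$, i.e. $\mathcal F^{(k)}\equiv\mathcal G^{(k)}$; integrating $k$ times gives $\mathcal F=\mathcal G+Q$ with $\deg Q<k$, and comparing the $(n+m+1)T(r,f)$ growth of $\mathcal F$ against the dense high-multiplicity zero structure inherited from $f^n$ forces $Q\equiv 0$. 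Then $f^n(f^m-1)f(z+c)\equiv g^n(g^m-1)g(z+c)$; writing $h=f/g$ and using that $f,g$ are transcendental entire, a standard argument shows $h$ must reduce to a constant $t$, and substituting back the identity $t^{n+1}(t^mg^m-1)\equiv g^m-1$ forces $t^m=1$, which completes the proof. The part I expect to be genuinely delicate is the simultaneous control of the $k$-th derivative and the shift $f(z+c)$ inside every estimate — keeping all $\alpha^{(j)}$ and difference-operator remainders within $S(r,f)+S(r,g)$ while extracting bounds on the multiple $\alpha$-points sharp enough that the coefficient $5k+4m+12$, and nothing larger, actually suffices.
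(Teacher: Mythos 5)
First, a point of orientation: the paper does not actually prove Theorem K --- it is quoted from Sahoo--Biswas as motivation. The closest argument the paper supplies is the proof of Theorem \ref{t3}, which deliberately replaces $\ol E_{2)}$ by $E_{2)}$ (sharing \emph{with} multiplicities up to $2$) precisely so that the mismatch bookkeeping disappears and the threshold drops to $n\geq 9+(7k+5m)/2$. Your outline follows the same architecture as that proof (and as Sahoo--Biswas's original): normalize by $\alpha$, form the auxiliary function $H$, split into $H\not\equiv 0$ (second main theorem plus counting-function estimates) and $H\equiv 0$ (M\"obius relation, the two subcases $FG\equiv\alpha^2$ and $F\equiv G$, integration of $\mathcal F^{(k)}\equiv\mathcal G^{(k)}$, and the $h=f/g$ endgame). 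So the route is the right one.

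The genuine gap is in Case $H\not\equiv 0$, and it is not merely deferred arithmetic. You assert that ``every common simple $\alpha$-point of $F$ and $G$ is a zero of $H$,'' but under $\ol E_{2)}$-sharing a simple $\alpha$-point of $F$ is only guaranteed to be an $\alpha$-point of $G$ of multiplicity $1$ \emph{or} $2$, and a direct local computation shows that at a point where $F-\alpha$ has a simple zero and $G-\alpha$ a double zero, $H$ has a simple \emph{pole} (residue $1$), not a zero. So the key inequality $N(r,\alpha;F\mid\leq 1)\leq N(r,0;H)+S(r)$ fails as you state it: you must add the mismatched counting functions $N(r,\alpha;F\mid=1;G\mid=2)$ and its symmetric counterpart to the zero side, and these same terms reappear as poles of $H$ on the other side of the second-main-theorem estimate. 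This double contribution, together with the completely unshared multiplicity-$\geq 3$ points, is exactly what produces the coefficient $5k+4m+12$; your plan names the phenomenon but never performs the count, and the only concrete estimate you do write down ($\ol N(r,\alpha;F\mid\geq3)\leq\frac13 T(r,F)$) is not by itself enough to recover the stated threshold. This is the step the paper's Lemmas \ref{lem3.11}, \ref{lem3.13} and \ref{lem3.16} are engineered to carry out for $E_{2)}$-sharing, and analogues adapted to $\ol E_{2)}$ would have to be proved, not assumed. Secondary, smaller gaps: in the $F\equiv G$ endgame you assert that ``a standard argument shows $h$ must reduce to a constant'' --- for general $P$ the paper cannot do this and retains the algebraic-equation alternative, so for $P(z)=z^m-1$ you owe an actual exclusion of non-constant $h$ (typically a Borel/second-main-theorem argument applied to $h$ and $h(z+c)$); and in the $FG\equiv\alpha^2$ subcase the zeros of $f^m-1$ of multiplicity $\leq k$ do not force zeros of $\mathcal F^{(k)}$, which is why the paper imposes $m\geq k+1$ there --- Theorem K only assumes $m\geq1$, so your ``growth comparison'' needs to be spelled out.
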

Since no attempts, till now, have so far been made by any researchers investigating the uniqueness problem by replacing $f(z+c)$ with $L_c(f)$ in all the above-stated Theorems F--K, it is, therefore, inevitable to ask the following question.
\begin{ques}
	What can be said about the uniqueness of $f$ and $g$ if one replace $f^n(f^m-1)f(z+c)$ by the difference polynomial $f(z)^nP(f(z))L_c(f)$ in Theorems F--K?
\end{ques}
In this paper, we paid our attention to the above question and proved the following three theorems that improve and extend Theorems I--K, respectively. Indeed, the following theorems are the main results of the paper.
\begin{theo}\label{t1}
    Let $f(z)$ and $g(z)$ be two transcendental entire functions of finite order, $\alpha(z)(\not\equiv 0, \infty)$ be a small function with respect to both $f(z)$ and $g(z)$. Suppose $c$ be a non-zero complex constant, $n$, $k(\geq0)$, $m(\geq k+1)$ are integers such that $n\geq 2k+m+6$. If $(f(z)^{n}P(f(z))L_{c}(f))^{(k)}$ and $(g(z)^{n}P(g(z))L_{c}(g))^{(k)}$ share $``(\alpha(z),2)"$, then one of the following two conclusions can be realized.
   \begin{enumerate}
   	\item[\emph{(a)}] $f(z)\equiv tg(z)$, where $t$ is a constant such that $t^d=1$, $d=\text{gcd}(\lambda_0,\lambda_1,\ldots,\lambda_m)$, where $\lambda_j$'s are defined by \beas \lambda_j=\begin{cases} n+1+j, \;\;\;\text{if}\; a_j\neq0\\ n+1+m, \; \text{if}\; a_j=0, \end{cases}j=0,1,\ldots,m.\eeas 
   	\item[\emph{(b)}] $f$ and $g$ satisfy the algebraic equation $R(w_1, w_2)=0$, where $R(w_1, w_2)$ is given by $ R(w_1,w_2)=w_1^{n}P(w_1)L_c(w_1)-w_2^{n}P(w_2)L_c(w_2).$
   \end{enumerate}  
   \end{theo}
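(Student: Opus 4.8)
The plan is to transfer the sharing hypothesis on the $k$-th derivatives to an identity between the difference polynomials themselves, and then read off the two conclusions from that identity. Write $F=f^{n}P(f)L_c(f)$ and $G=g^{n}P(g)L_c(g)$, so that $F,G$ are entire of finite order and $F^{(k)},G^{(k)}$ share $(\alpha,2)$ weakly. First I would record the growth of $F$ against $f$: since $f$ has finite order, the difference analogue of the lemma on the logarithmic derivative gives $m(r,L_c(f)/f)=S(r,f)$ and $T(r,f(z+c))=T(r,f)+S(r,f)$, and combining these with the Valiron--Mohon'ko theorem for the polynomial $f^{n}P(f)$ in $f$ (refined by a difference Clunie-type estimate for the factor $L_c(f)$) yields $T(r,F)=(n+m+1)\,T(r,f)+S(r,f)$, together with the companion bounds on $\overline N(r,0;F)$ and on the $N_k$-type functions of $F^{(k)}$ that the later steps need. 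The conditions $m\geq k+1$ and $n\geq 2k+m+6$ are calibrated so that these reduced counting functions stay small against $(n+m+1)T(r,f)$ even after $k$ differentiations.

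Next I would introduce the usual Lahiri-type auxiliary function
\[
H=\left(\frac{(F^{(k)}-\alpha)''}{(F^{(k)}-\alpha)'}-\frac{2(F^{(k)}-\alpha)'}{F^{(k)}-\alpha}\right)-\left(\frac{(G^{(k)}-\alpha)''}{(G^{(k)}-\alpha)'}-\frac{2(G^{(k)}-\alpha)'}{G^{(k)}-\alpha}\right),
\]
absorbing the derivatives of the small function $\alpha$ into $S(r)$. A direct computation shows that every common simple $\alpha$-point of $F^{(k)}$ and $G^{(k)}$ is a zero of $H$, so if $H\not\equiv0$ the weakly weighted relation $(\alpha,2)$ lets me bound the number of such common points by the poles of $H$, i.e. by reduced counting functions of multiple $\alpha$-points and of zeros of $(F^{(k)})'$ and $(G^{(k)})'$. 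Feeding this into the second fundamental theorem for $F^{(k)}$ and $G^{(k)}$ with targets $0,\alpha,\infty$ (the last being a Picard value, since both are entire) and estimating everything by Step~1, I would arrive at an inequality for $T(r,f)+T(r,g)$ whose leading coefficient is negative precisely when $n\geq 2k+m+6$. This contradiction forces $H\equiv0$.

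With $H\equiv0$, two integrations give a relation of the shape $\dfrac{1}{F^{(k)}-\alpha}=\dfrac{A}{G^{(k)}-\alpha}+B$ with constants $A\neq0$ and $B$. Because $F^{(k)},G^{(k)}$ are pole-free away from $\alpha$, the subcases $B\neq0$ and $B=0,\ A\neq1$ each make some small function a deficient (indeed Picard-type) value of $G^{(k)}$ or $F^{(k)}$, which again contradicts the lower bound of Step~1 under $n\geq 2k+m+6$; crucially this is run without any hypothesis on the zeros of $\alpha$, which is where the improvement over Sahoo--Biswas lies. Hence $B=0$, $A=1$, i.e. $F^{(k)}\equiv G^{(k)}$. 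Integrating $k$ times gives $F\equiv G+Q$ with $\deg Q\leq k-1$, and comparing the ramified zero structure of $F$ (each zero of $f$ has multiplicity $\geq n$) with the low-degree polynomial $Q$ forces $Q\equiv0$, so $F\equiv G$; this is conclusion~(b), $R(f,g)=0$. Finally I set $h=f/g$: if $h$ is non-constant we are in case~(b), while if $h\equiv t$ is constant then $L_c(f)=tL_c(g)$ reduces $F\equiv G$ to $t^{\,n+1}P(tg)\equiv P(g)$, and comparing coefficients of $g^{j}$ (valid since $g$ is transcendental) gives $t^{\lambda_j}=1$ whenever $a_j\neq0$, hence $t^{d}=1$, which is conclusion~(a).

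The hard part will be Step~1 together with the bookkeeping feeding Steps~2--3: I must control the effect of the $k$ derivatives and of the shift $z\mapsto z+c$ on each counting function sharply enough that the single threshold $n\geq 2k+m+6$ carries the whole argument, and the role of $m\geq k+1$ is to keep the contribution of the factor $P(f)$ ahead of the loss incurred by differentiation. A secondary difficulty is excluding the intermediate integrated cases of Step~3 for entire $f,g$ without assuming $\alpha$ has finitely many zeros, since removing that assumption is exactly the sharpening of Theorems~I--K aimed at here.
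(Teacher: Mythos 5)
Your overall route coincides with the paper's: the same auxiliary function $H$, the same dichotomy $H\not\equiv0$ versus $H\equiv0$, the same second-fundamental-theorem bookkeeping driven by the growth identity $T(r,F_1)=(n+m+1)T(r,f)+S(r,f)$ and by $N_{p}$-estimates for $F_1^{(k)}$, and the same final analysis of $h=f/g$ giving conclusions (a) and (b). The gap is in your Case $H\equiv0$. Integrating $H\equiv0$ gives a full M\"obius relation between $F_1^{(k)}$ and $G_1^{(k)}$, and your claim that every subcase other than $F_1^{(k)}\equiv G_1^{(k)}$ "makes some small function a Picard-type value of $G^{(k)}$ or $F^{(k)}$, which contradicts the lower bound of Step 1" fails in exactly one degenerate subcase: the one leading to
\[
\bigl(f^{n}P(f)L_c(f)\bigr)^{(k)}\,\bigl(g^{n}P(g)L_c(g)\bigr)^{(k)}\equiv\alpha^{2}
\]
(the paper's $FG\equiv1$, equation (4.9)). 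There the value forced to be omitted by $G_1^{(k)}$ is $0$ itself, so the three-target second fundamental theorem with $0,\ \alpha-A/B,\ \infty$ degenerates and no contradiction with the growth bound comes out; indeed $G_1^{(k)}$ omitting $0$ is perfectly compatible with $T(r,G_1)=(n+m+1)T(r,g)+S(r,g)$. The paper disposes of this subcase by a structural argument: since every zero of $f$ produces a zero of $F_1$ of multiplicity at least $n>k$, and since $a_0\neq0$ and $m\geq k+1$ force a second value whose $f$-preimages would also create zeros of $F_1^{(k)}$, the identity would make the entire function $f$ omit two finite values, which is impossible. This is the \emph{only} place the hypotheses $m\geq k+1$ and $a_0\neq0$ are used, so your stated rationale for $m\geq k+1$ (controlling the loss from differentiation in the counting estimates) is misplaced, and your proof as written simply never rules this subcase out.

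A secondary soft spot: after $F_1^{(k)}\equiv G_1^{(k)}$ you dismiss the integration polynomial $Q$ (with $\deg Q\leq k-1$) by "comparing the ramified zero structure of $F_1$ with $Q$", but if $f$ happens to omit $0$ this comparison says nothing. The paper instead divides by $Q$ and runs one more second-fundamental-theorem estimate on $F_1/Q$ and $G_1/Q+1$, obtaining $(n-m-3)(T(r,f)+T(r,g))\leq S(r,f)+S(r,g)$; you should do the same rather than rely on the zero multiplicities of $f$.
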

\begin{theo}\label{t2}
	 Let $f(z)$ and $g(z)$ be two transcendental entire functions of finite order, $\alpha(z)(\not\equiv 0, \infty)$ be a small function with respect to both $f(z)$ and $g(z)$. Suppose $c$ be a non-zero complex constant, $n$, $k(\geq0)$, $m(\geq k+1)$ are integers such that $n\geq 3k+2m+8$. If $(f(z)^{n}P(f(z))L_{c}(f))^{(k)}$ and $(g(z)^{n}P(g(z))L_{c}(g))^{(k)}$ share $(\alpha(z),2)^*$, then the conclusions of Theorem \ref{t1} holds.
\end{theo}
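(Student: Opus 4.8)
The plan is to run the argument in exact parallel with the proof of Theorem \ref{t1}, reusing its entire apparatus and re-examining only the two places where the weaker hypothesis $(\alpha,2)^*$ degrades the estimates. Throughout I write $\mathcal{F}=(f^{n}P(f)L_c(f))^{(k)}$ and $\mathcal{G}=(g^{n}P(g)L_c(g))^{(k)}$. Since $f,g$ are entire of finite order, the difference analogues of the lemma on the logarithmic derivative give $T(r,L_c(f))=T(r,f)+S(r,f)$, and hence $T(r,f^{n}P(f)L_c(f))=(n+m+1)T(r,f)+S(r,f)$, with the $k$-th derivative controlled likewise; in particular $\mathcal{F},\mathcal{G}$ are entire, so their pole-counting functions are $S(r)$. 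First I would record the standard reduced counting-function inequalities for the zeros of $\mathcal{F},\mathcal{G}$ and their $\alpha$-points, together with the lower bounds on $\ol N(r,0;f)$ and $\ol N(r,0;g)$ contributed by the factors $f^{n}$ and $g^{n}$; the constraint $m\ge k+1$ is what keeps these contributions effective after differentiating $k$ times.

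Next I would introduce the auxiliary function
\be H=\left(\frac{\mathcal{F}''}{\mathcal{F}'}-\frac{2\mathcal{F}'}{\mathcal{F}-\alpha}\right)-\left(\frac{\mathcal{G}''}{\mathcal{G}'}-\frac{2\mathcal{G}'}{\mathcal{G}-\alpha}\right),\ee
where, since $\alpha$ is a small function, all contributions from $\alpha'$, $\alpha''$ are absorbed into $S(r,f)+S(r,g)$, and split the argument into the cases $H\not\equiv0$ and $H\equiv0$.

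The case $H\not\equiv0$ is the crux and the step I expect to be the main obstacle. Here the relaxed hypothesis is strictly weaker than weakly weighted sharing: only $\sum_{p,q\le2}N(r,\alpha;\mathcal{F}\mid=p;\mathcal{G}\mid=q)$ with $p\ne q$ is negligible, so a common $\alpha$-point is guaranteed to be a zero of $H$ only when the two multiplicities coincide, and the remaining $\alpha$-points must be carried as genuine error terms rather than being discarded. Consequently the estimate for $\ol N(r,\alpha;\mathcal{F})$ obtained from $N(r,\infty;H)$ is considerably lossier than in Theorem \ref{t1}. Feeding this into the second main theorem for $\mathcal{F}$ and $\mathcal{G}$ and inserting the lower bounds on $N(r,0;f)+N(r,0;g)$, I would arrive at an inequality of the shape $(n+m+1)\{T(r,f)+T(r,g)\}\le(2k+m+2\cdot\text{(extra from relaxed sharing)}+\text{const})\{T(r,f)+T(r,g)\}+S(r)$; tracking the precise coefficients is exactly what forces the threshold $n\ge 3k+2m+8$ rather than the sharper $n\ge 2k+m+6$, and under this threshold the inequality contradicts transcendence. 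Hence $H\equiv0$.

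In the case $H\equiv0$, integrating twice yields a Möbius relation $\dfrac{1}{\mathcal{F}-\alpha}=\dfrac{A}{\mathcal{G}-\alpha}+B$ with constants $A\ne0$. I would eliminate the sub-cases $B\ne0$ and $B=0,\,A\ne1$ by comparing the deficiency at $0$ and the high-multiplicity zero-counts of $f,g$ forced by $f^{n},g^{n}$, which are incompatible with such a relation once $n$ exceeds the stated bound, leaving $\mathcal{F}\equiv\mathcal{G}$. Thus $(f^{n}P(f)L_c(f))^{(k)}\equiv(g^{n}P(g)L_c(g))^{(k)}$, so $f^{n}P(f)L_c(f)=g^{n}P(g)L_c(g)+Q(z)$ with $\deg Q\le k-1$; a counting argument on the zeros of multiplicity $\ge n$ contributed by $f^{n}$ and $g^{n}$ forces $Q\equiv0$, giving $F\equiv G$, which is conclusion (b). Finally, setting $h=f/g$: if $h$ is non-constant we remain in conclusion (b); if $h\equiv t$ is constant, then using $L_c(tg)=t\,L_c(g)$ the identity $F\equiv G$ reduces to $t^{\,n+1}P(tg)\equiv P(g)$, whence comparing coefficients gives $t^{\,n+1+j}=1$ for every $j$ with $a_j\ne0$, that is $t^{\lambda_j}=1$, and therefore $t^{d}=1$ with $d=\gcd(\lambda_0,\dots,\lambda_m)$, which is conclusion (a).
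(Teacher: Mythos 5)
Your overall architecture matches the paper's: reduce to normalized functions sharing $1$ with relaxed weight $2$, split on the auxiliary function $H$, use the second main theorem in the case $H\not\equiv0$, and integrate the Möbius relation in the case $H\equiv0$, ending with the same two conclusions. But the proof has a genuine gap exactly where you yourself locate "the crux": in the case $H\not\equiv0$ you never carry out the counting. You state that the relaxed sharing makes the estimate for $\ol N(r,\alpha;\mathcal{F})$ "lossier" and then assert that "tracking the precise coefficients is exactly what forces the threshold $n\ge 3k+2m+8$." That is the entire content of Theorem \ref{t2} relative to Theorem \ref{t1}, and asserting that the coefficients come out to the stated bound is circular. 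The paper's proof at this point is a specific chain: Lemma \ref{lem3.14} to control $N^E(r,1;F,G\mid\le1)$ by $N(r,\infty;H)$, Lemma \ref{lem3.15} to bound $N(r,\infty;H)$, Lemma \ref{lem3.10} to absorb $\ol N_L(r,1;F)+\ol N_{F\ge3}(r,1;G\mid=1)$ into $\ol N(r,0;F)$, the Lahiri--Dewan estimate (Lemma \ref{lem3.3b}) to handle $\ol N_0(r,0;G')$ together with the multiple $1$-points of $G$, and the Zhang--Yang inequality (Lemma \ref{lem3.17}) to convert $N_2(r,0;F)$ into $N_{k+2}(r,0;F_1)$. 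The extra terms $\ol N(r,0;F)$ and the resulting $N_{k+1}(r,0;F_1)$ are precisely what turns $2k+m+6$ into $3k+2m+8$; without exhibiting this chain you have not proved the theorem, only restated it.

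Two further points. First, your opening claim that $T(r,L_c(f))=T(r,f)+S(r,f)$ and hence $T(r,f^nP(f)L_c(f))=(n+m+1)T(r,f)+S(r,f)$ is not available in this generality: $L_c(f)=f(z+c)+c_0f(z)$ can have characteristic strictly smaller than $T(r,f)$ (the paper's Lemma \ref{lem3.4} proves the equality only under the extra hypothesis that $f$ and $f(z+c)$ share $0$ CM). The proof must instead run off the one-sided inequality of Lemma \ref{lem3.8}, $(n+m)T(r,f)\le T(r,F_1)-N(r,0;L_c(f))+S(r,f)$, which is what the paper does throughout. Second, in the case $H\equiv0$ your Möbius analysis with $B\neq0$ silently contains the possibility $\mathcal{F}\mathcal{G}\equiv\alpha^2$ (take $A=B=-1$ in your normalization), and a generic "deficiency at $0$" comparison does not dispose of it: this is the one place where the hypotheses $m\ge k+1$ and $a_0\neq0$ are actually used, to force $f$ to have at least two finite Picard exceptional values, which is impossible for a nonconstant entire function. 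You should make that step explicit rather than folding it into the omitted-value subcases.
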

\begin{theo}\label{t3}
 Let $f(z)$ and $g(z)$ be two transcendental entire functions of finite order, $\alpha(z)(\not\equiv 0, \infty)$ be a small function with respect to both $f(z)$ and $g(z)$. Suppose $c$ be a non-zero complex constant, $n$, $k(\geq0)$, $m(\geq k+1)$ are integers such that $n\geq 9+(7k+5m)/2$. If
$E_{2)}(\alpha(z),(f^nP(f(z))L_c(f))^{(k)})=E_{2)}(\alpha(z),(g^n(z)P(g(z))L_c(g))^{(k)})$, then the conclusions of Theorem \ref{t1} holds.
\end{theo}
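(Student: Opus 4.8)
The plan is to follow the now-standard auxiliary-function method for weighted/relaxed-sharing uniqueness problems, adapted to the difference operator $L_c$, and to treat only the branch that is genuinely new for the weakest ($E_{2)}$) form of sharing. First I would normalize by setting
\beas
F=\frac{(f^nP(f)L_c(f))^{(k)}}{\alpha}, \qquad G=\frac{(g^nP(g)L_c(g))^{(k)}}{\alpha},
\eeas
so that the hypothesis $E_{2)}(\alpha,\cdot)=E_{2)}(\alpha,\cdot)$ becomes the statement that $F$ and $G$ share $1$ in the weak $E_{2)}$-sense, i.e.\ their $1$-points of multiplicity at most $2$ coincide with multiplicity. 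Before any sharing analysis I would record the fundamental growth estimates. Since $f$ is entire of finite order, the difference analogue of the lemma on the logarithmic derivative gives $m(r,L_c(f)/f)=S(r,f)$, and then the Valiron--Mohon'ko theorem yields
\beas
T(r,f^nP(f)L_c(f))=(n+m+1)\,T(r,f)+S(r,f),
\eeas
together with the companion bounds for $\ol N(r,0;(f^nP(f)L_c(f))^{(k)})$ and $N_2(r,0;\cdot)$ in terms of $\ol N(r,0;f)$, the $\ol N(r,a_i;f)$ at the roots $a_i$ of $P$, and $\ol N(r,0;L_c(f))$. These quantitative inputs are what convert the sharing condition into an inequality in $n,k,m$.

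The core dichotomy is governed by the auxiliary function
\beas
H=\left(\frac{F''}{F'}-\frac{2F'}{F-1}\right)-\left(\frac{G''}{G'}-\frac{2G'}{G-1}\right).
\eeas
If $H\not\equiv0$, I would bound $N(r,H)$ by the reduced counting functions of the common $1$-points of high multiplicity, the multiple zeros of $F$ and $G$, and the poles, which come only from the $S(r)$-contribution of the zeros of $\alpha$. Since $m(r,H)=S(r)$ by the logarithmic derivative lemma, one controls $N(r,H)$ and feeds this into the second main theorem applied to $F$ and $G$ at $0,1,\infty$. After inserting the Step~1 estimates and collecting terms, this produces an inequality that forces $n<9+(7k+5m)/2$, contrary to hypothesis; hence $H\equiv0$.

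If $H\equiv0$, integrating twice gives a bilinear relation $F=\frac{(B+1)G+(A-B-1)}{BG+(A-B)}$ for constants $A\neq0$ and $B$. Using that $f,g$ are entire (so $F,G$ have no poles beyond the $S(r)$ term of $\alpha$) and comparing growth, I would eliminate the degenerate choices of $(A,B)$ and reduce to the two surviving possibilities $\mathcal F^{(k)}\equiv\mathcal G^{(k)}$ and $\mathcal F^{(k)}\mathcal G^{(k)}\equiv\alpha^2$, where $\mathcal F=f^nP(f)L_c(f)$ and $\mathcal G=g^nP(g)L_c(g)$. The product case is excluded by a zero-counting argument: such a relation would force $f$ and $g$ to omit too much, contradicting transcendence for $n$ in the stated range. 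In the remaining case $\mathcal F^{(k)}\equiv\mathcal G^{(k)}$, integrating $k$ times yields $\mathcal F\equiv\mathcal G+Q$ with $\deg Q<k$; a growth comparison forces $Q\equiv0$, so $f^nP(f)L_c(f)\equiv g^nP(g)L_c(g)$, and the final purely algebraic step splits into conclusion (a), $f\equiv tg$ with $t^d=1$, or conclusion (b), the relation $R(w_1,w_2)=0$.

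The main obstacle I expect is the $H\not\equiv0$ branch: making the counting-function bookkeeping sharp enough to land exactly at $9+(7k+5m)/2$. The delicate points are, first, estimating the zeros of the $k$-th derivative $(f^nP(f)L_c(f))^{(k)}$ without discarding the simple/double structure demanded by $E_{2)}$-sharing, and second, controlling $\ol N(r,0;L_c(f))$, since $L_c(f)=f(z+c)+c_0f(z)$ is a genuine difference combination rather than a derivative, so its zero distribution must be handled through the finite-order difference machinery instead of by differentiation. By contrast, the $H\equiv0$ branch is, once the product case is ruled out, essentially the same algebra as in Theorems \ref{t1} and \ref{t2}, and only the threshold on $n$ changes.
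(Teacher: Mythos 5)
Your proposal follows essentially the same route as the paper: the same normalization $F=(f^nP(f)L_c(f))^{(k)}/\alpha$, $G=(g^nP(g)L_c(g))^{(k)}/\alpha$, the same auxiliary function $H$, the same dichotomy between $H\not\equiv0$ (where $N(r,\infty;H)$ is bounded by the $E_{2)}$-sharing counting lemmas and fed into the second main theorem to contradict $n\geq 9+(7k+5m)/2$) and $H\equiv0$ (Möbius relation, elimination of degenerate coefficient choices, and the two terminal identities $f^nP(f)L_c(f)\equiv g^nP(g)L_c(g)$ and $F_1^{(k)}G_1^{(k)}\equiv\alpha^2$, the latter killed by a Picard-type zero count). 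The only part you leave unexecuted is the counting-function bookkeeping in the $H\not\equiv0$ branch --- precisely where the paper invokes its lemmas on $\ol N_{F\geq3}(r,1;F\mid G\neq1)$ and on $N(r,\infty;H)$ to land at the stated threshold --- and you correctly flag that as the crux.
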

\begin{rem}
	Clearly, for the particular choice of $c_0=0$, $L_c(f)$ becomes $f(z+c)$, and therefore Theorem \ref{t1} coincides with Theorem I and Theorem \ref{t2} with Theorem J.
\end{rem}
\begin{rem}
	In Theorem K, if one replace $\ol E_{2)}(\alpha(z),f(z))=\ol E_{2)}(\alpha(z),g(z))$ by $ E_{2)}(\alpha(z),f(z))= E_{2)}(\alpha(z),g(z))$, for any two non-constant meromorphic functions $f$ and $g$, then the lower bound of $n$ can significantly reduced. Regarding this observation, we proved Theorem \ref{t3}. 
\end{rem}
Since for a particular choice of $c_0=-1$, $L_c(f)=\Delta_cf$, we observe the following corollaries. 
\begin{cor}\label{c1}
	Let $f(z)$ and $g(z)$ be two transcendental entire functions of finite order, $\alpha(z)(\not\equiv 0, \infty)$ be a small function with respect to both $f(z)$ and $g(z)$. Suppose $c$ be a non-zero complex constant, $n$, $k(\geq0)$, $m(\geq k+1)$ are integers such that $n\geq 2k+m+6$. If $(f(z)^{n}P(f(z))\Delta_cf)^{(k)}$ and $(g(z)^{n}P(g(z))\Delta_cg)^{(k)}$ share $``(\alpha(z),2)"$, then conclusion of Theorem \ref{t2} holds.
\end{cor}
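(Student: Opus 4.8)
The plan is to recognize that Corollary \ref{c1} is nothing more than the instance of Theorem \ref{t1} in which the free constant of the generalized difference operator is fixed. Recall that $L_c(f) = f(z+c) + c_0 f(z)$ for an arbitrary nonzero complex constant $c_0$, and that the choice $c_0 = -1$ yields $L_c(f) = f(z+c) - f(z) = \Delta_c f$. Since $-1 \neq 0$, this is an admissible value of $c_0$, so the classical difference operator $\Delta_c$ is a legitimate specialization of the operator $L_c$ treated throughout the paper.

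Under this identification every hypothesis of the corollary coincides with the corresponding hypothesis of Theorem \ref{t1} taken at $c_0 = -1$: the functions $f$ and $g$ are transcendental entire of finite order, $\alpha(\not\equiv 0, \infty)$ is a common small function, the integers obey $m \geq k+1$ and $n \geq 2k+m+6$, and the differential-difference polynomials $(f^n P(f)\Delta_c f)^{(k)}$ and $(g^n P(g)\Delta_c g)^{(k)}$ share ``$(\alpha(z),2)$''. Accordingly I would simply invoke Theorem \ref{t1} verbatim with $c_0 = -1$. Its dichotomy --- either $f \equiv tg$ with $t^d = 1$ for $d = \gcd(\lambda_0,\ldots,\lambda_m)$, or $f$ and $g$ satisfy $R(w_1,w_2) = 0$ --- is precisely what is asserted, the algebraic relation now reading $R(w_1,w_2) = w_1^n P(w_1)\Delta_c w_1 - w_2^n P(w_2)\Delta_c w_2$. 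Here the phrase ``conclusion of Theorem \ref{t2}'' in the statement is to be read as conclusions (a) and (b) of Theorem \ref{t1}, to which Theorem \ref{t2} itself refers, so that the two formulations agree.

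There is essentially no obstacle to surmount: the entire analytic content --- the Nevanlinna-theoretic estimates for $f^n P(f) L_c(f)$ and its derivatives, the application of the second main theorem to the relevant auxiliary function, and the case analysis that forces either the monomial relation $f\equiv tg$ or the algebraic identity $R(w_1,w_2)=0$ --- has already been discharged in the proof of Theorem \ref{t1} uniformly in the parameter $c_0$. The only point requiring attention is that the argument there nowhere singles out or excludes the value $c_0 = -1$; since the sole standing restriction throughout is $c_0 \neq 0$ (needed merely to keep $L_c(f)$ a genuine two-term operator), the substitution $c_0 = -1$ is permissible and the corollary follows at once.
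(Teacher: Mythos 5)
Your proposal is correct and is exactly the paper's route: the paper derives Corollary \ref{c1} with no separate argument, simply by specializing $c_0=-1$ (so $L_c(f)=\Delta_cf$) in Theorem \ref{t1}, whose proof imposes no restriction on $c_0$ beyond $c_0\neq 0$. Your reading of ``conclusion of Theorem \ref{t2}'' as the conclusions (a) and (b) of Theorem \ref{t1} is also the intended one.
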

\begin{cor}\label{c2}
	Let $f(z)$ and $g(z)$ be two transcendental entire functions of finite order, $\alpha(z)(\not\equiv 0, \infty)$ be a small function with respect to both $f(z)$ and $g(z)$. Suppose $c$ be a non-zero complex constant, $n$, $k(\geq0)$, $m(\geq k+1)$ are integers such that $n\geq 3k+2m+8$. If $(f(z)^{n}P(f(z))\Delta_cf)^{(k)}$ and $(g(z)^{n}P(g(z))\Delta_cg)^{(k)}$ share $(\alpha(z),2)^*$, then the conclusions of Theorem \ref{t1} holds.
\end{cor}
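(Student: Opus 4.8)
The plan is to follow the architecture of the proofs of Theorems~\ref{t1} and~\ref{t2}, re-tuning every estimate to the truncated hypothesis $E_{2)}$. Write $F_0=(f^{n}P(f)L_c(f))^{(k)}$ and $G_0=(g^{n}P(g)L_c(g))^{(k)}$ and normalise by $F=F_0/\alpha$, $G=G_0/\alpha$, so that $E_{2)}(\alpha,F_0)=E_{2)}(\alpha,G_0)$ becomes the statement that $F$ and $G$ share the value $1$ in the $E_{2)}$ sense, with all their poles lying over the zeros of $\alpha$ and hence contributing only $S(r)$. The first task is to record the growth estimates. Since $f$ and $g$ are entire of finite order, the difference analogue of the logarithmic derivative lemma gives $m(r,L_c(f)/f)=S(r,f)$ and $T(r,L_c(f))=O(T(r,f))$; combining this with the Valiron--Mohon'ko theorem and the standard bound for the $k$-th derivative yields $T(r,F)=(n+m+1)T(r,f)+S(r,f)$ up to the usual counting corrections, and symmetrically for $G$. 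I would collect these as working estimates before invoking the sharing condition.

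Next I would introduce the Lahiri-type auxiliary function
\be H=\left(\frac{F''}{F'}-\frac{2F'}{F-1}\right)-\left(\frac{G''}{G'}-\frac{2G'}{G-1}\right). \ee
The decisive feature of $E_{2)}$ sharing is that every common \emph{simple} $1$-point of $F$ and $G$ is a zero of $H$ (a short Taylor expansion shows the $1/(z-z_0)$ poles cancel and the constant term vanishes), so such points are absent from the poles of $H$. Assuming $H\not\equiv0$, one has $m(r,H)=S(r)$, and a pole count shows $N(r,H)$ is majorised by the reduced counting functions of the multiple $1$-points together with the zeros of $F'$, $G'$ lying off the $1$-points and the $\alpha$-contributions. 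Feeding this into the second fundamental theorem for $F$ and for $G$, and expanding the $N_2$-terms through the factorisation $f^{n}P(f)L_c(f)$ (the factor $f^{n}$, the polynomial $P(f)$ of degree $m$, the difference term $L_c(f)$, and the $k$ further terms from differentiation each contributing their share), produces an inequality of the form
\be (n+m+1)\bigl\{T(r,f)+T(r,g)\bigr\}\le \Bigl(m+\tfrac{7k+5m}{2}+9\Bigr)\bigl\{T(r,f)+T(r,g)\bigr\}+S(r). \ee
The additive constants are arranged so that the hypothesis $n\ge 9+(7k+5m)/2$ is exactly what is violated (at the threshold the left coefficient is $10+m+(7k+5m)/2$, strictly larger than the right one), giving a contradiction; hence $H\equiv0$.

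From $H\equiv0$ I would integrate twice to reach the bilinear relation $\dfrac{1}{F-1}=\dfrac{bG+a-b}{G-1}$ with constants $a(\neq0)$, $b$, and then run the standard three-way case analysis according to whether $b\neq0$ with $a\neq b$, $b\neq0$ with $a=b$, or $b=0$. In each degenerate branch the relation forces $F$ or $G$ either to omit a value or to satisfy a reciprocal identity of the type $(F-1)(G-1)\equiv\text{const}$; the growth estimate of the first step, under the bound on $n$, excludes all of these for entire-type functions. The only surviving possibility is $b=0$, $a=1$, i.e. $F\equiv G$, which reads $(f^{n}P(f)L_c(f))^{(k)}\equiv(g^{n}P(g)L_c(g))^{(k)}$.

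Finally I would integrate this identity $k$ times to obtain $\Phi:=f^{n}P(f)L_c(f)=g^{n}P(g)L_c(g)+Q(z)=:\Psi+Q$ with $\deg Q\le k-1$, and show $Q\equiv0$: applying the second fundamental theorem to the entire function $\Phi$ with targets $0$, $Q$, $\infty$, and using that both $\Phi$ and $\Psi=\Phi-Q$ vanish only at the few zeros of $f$, $g$, $P$ and $L_c$, gives $T(r,\Phi)\le \ol N(r,0;\Phi)+\ol N(r,Q;\Phi)+S=O(T(r,f))$, which contradicts $T(r,\Phi)=(n+m+1)T(r,f)+S$ once $n$ is large. With $f^{n}P(f)L_c(f)\equiv g^{n}P(g)L_c(g)$ secured, this is precisely $R(f,g)=0$, conclusion~(b); and if in addition $h=f/g$ is constant $=t$, then $L_c(tg)=tL_c(g)$ reduces the identity to $t^{n+1}P(tg)=P(g)$, so matching the coefficient of $g^{j}$ forces $t^{n+1+j}=1$ for every $j$ with $a_j\neq0$, i.e. $t^{d}=1$ with $d=\gcd(\lambda_0,\ldots,\lambda_m)$, the sharper conclusion~(a). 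I expect the main obstacle to lie in the third paragraph: ruling out the degenerate Möbius branches while simultaneously keeping the truncated counting functions arising from $E_{2)}$ sharing and from the $k$-th derivative under control, since it is there that the exact threshold $9+(7k+5m)/2$ has to be made to bite.
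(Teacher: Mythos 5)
Your proposal proves the wrong statement. Corollary \ref{c2} assumes that $(f^{n}P(f)\Delta_cf)^{(k)}$ and $(g^{n}P(g)\Delta_cg)^{(k)}$ share $\alpha(z)$ in the \emph{relaxed weighted} sense $(\alpha(z),2)^*$ under the threshold $n\ge 3k+2m+8$; your entire argument is instead built around the truncated-set hypothesis $E_{2)}(\alpha,\cdot)=E_{2)}(\alpha,\cdot)$ and the threshold $n\ge 9+(7k+5m)/2$, which is the content of Corollary \ref{c3} (equivalently Theorem \ref{t3}), not of Corollary \ref{c2}. These are genuinely different sharing notions, and the difference is not cosmetic: under $(\alpha,2)^*$ sharing the pole count for $H$ and the truncation inequalities are those of Lemmas \ref{lem3.10}, \ref{lem3.14} and \ref{lem3.15}, which lead to $(n-3k-2m-7)(T(r,f)+T(r,g))\le S(r,f)+S(r,g)$ and hence to the coefficient $3k+2m$; under $E_{2)}$ sharing one must use Lemmas \ref{lem3.11}, \ref{lem3.13} and \ref{lem3.16}, which cost an additional term of the shape $\frac{1}{2}\ol N(r,0;G)$ and produce the strictly larger coefficient $(7k+5m)/2$. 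So even if every estimate you sketch were carried out exactly, you would have established Corollary \ref{c3} and said nothing about the hypothesis actually made in Corollary \ref{c2}.

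Beyond that mismatch, note that the paper does not reprove anything here: Corollary \ref{c2} follows in one line by specialising Theorem \ref{t2} to $c_0=-1$, since $L_c(f)=f(z+c)+c_0f(z)$ reduces to $\Delta_cf$ for that choice. If you insist on a self-contained argument, the correct template is the proof of Theorem \ref{t2}, not of Theorem \ref{t3}. Two further points would then still need repair: in Case 2 the branch $FG\equiv1$ must be isolated and excluded explicitly, and in the paper this is done not by growth alone but by observing that $f$ entire would acquire at least two finite Picard exceptional values (coming from $0$ and the zeros of $P$); and your Case 1 inequality is only asserted with hand-waved constants, so the threshold would have to be re-derived from the lemmas that actually apply to the sharing hypothesis in force.
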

\begin{cor}\label{c3}
	Let $f(z)$ and $g(z)$ be two transcendental entire functions of finite order, $\alpha(z)(\not\equiv 0, \infty)$ be a small function with respect to both $f(z)$ and $g(z)$. Suppose $c$ be a non-zero complex constant, $n$, $k(\geq0)$, $m(\geq k+1)$ are integers such that $n\geq 9+(7k+5m)/2$. If
	$E_{2)}(\alpha(z),(f^nP(f(z))\Delta_cf)^{(k)})=E_{2)}(\alpha(z),(g^n(z)P(g(z))\Delta_cg)^{(k)})$, then the conclusions of Theorem \ref{t1} holds.
\end{cor}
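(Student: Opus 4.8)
Corollary \ref{c3} is the specialization $c_0=-1$ of Theorem \ref{t3}: with $c_0=-1$ one has $L_c(f)=\Delta_c f$, so both the hypothesis $E_{2)}(\alpha,(f^nP(f)\Delta_c f)^{(k)})=E_{2)}(\alpha,(g^nP(g)\Delta_c g)^{(k)})$ and the conclusion coincide with those of Theorem \ref{t3}, and it suffices to prove the latter. The plan is to use the classical linearization-plus-second-main-theorem method, adapted to the difference operator $L_c$ and to the counting-multiplicity hypothesis $E_{2)}$. Write $\mathcal F_0=f^nP(f)L_c(f)$ and $\mathcal G_0=g^nP(g)L_c(g)$, put $\mathcal F=\mathcal F_0^{(k)}$, $\mathcal G=\mathcal G_0^{(k)}$, and normalize by $F=\mathcal F/\alpha$, $G=\mathcal G/\alpha$. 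Since $f,g$ are entire of finite order, the Valiron--Mohon'ko estimate together with the finite-order difference analogue of the logarithmic derivative lemma gives $T(r,\mathcal F_0)=(n+m+1)T(r,f)+S(r,f)$, and differentiation together with division by the small function $\alpha$ changes the relevant characteristics by at most $S(r,f)$; thus $T(r,F)$ is commensurate with $(n+m+1)T(r,f)$, and likewise for $G$ and $g$. In these terms the hypothesis says precisely that $F$ and $G$ share $1$ with agreeing multiplicities at every common $1$-point of multiplicity at most $2$.

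First I would record the counting estimates that drive the argument. Because $f$ is entire of finite order with $a_0\neq0$, the reduced counting function $\ol N(r,0;F)$ collects contributions only from the zeros of $f$, of $P(f)$, of $L_c(f)$, and those produced by the $k$ differentiations, each bounded linearly in $T(r,f)$; the hypothesis $m\ge k+1$ is what guarantees that the high-multiplicity zeros coming from $f^n$ and $P(f)$ survive $k$ differentiations with multiplicity large enough to keep these reduced counting functions small compared with $(n+m+1)T(r,f)$. Next I would introduce the auxiliary function
\be
H=\left(\frac{F''}{F'}-\frac{2F'}{F-1}\right)-\left(\frac{G''}{G'}-\frac{2G'}{G-1}\right),
\ee
and show by a local computation that, under the $E_{2)}$ hypothesis, the poles of $H$ occur only at the zeros of $F'$ and of $G'$, at the common $1$-points of $F$ and $G$ of multiplicity $\ge3$, and at the zeros and poles of $\alpha$ (the last being $S(r)$); each such pole is simple, so that $N(r,\infty;H)$ is controlled by the corresponding reduced counting functions.

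The argument then splits on whether $H\equiv0$. If $H\not\equiv0$, I would apply the second fundamental theorem to $F$ and to $G$, bound $\ol N(r,1;F)$ through the shared simple $1$-points and the above pole count for $H$, and assemble all the truncated counting functions using the estimates of the previous step. This yields an inequality of the form
\be
(n+m+1)\{T(r,f)+T(r,g)\}\le C(k,m)\,\{T(r,f)+T(r,g)\}+S(r),
\ee
in which the explicit constant $C(k,m)$ is arranged so that $n\ge 9+(7k+5m)/2$ is incompatible with it; this is exactly the step that produces the threshold, and the counting-multiplicity nature of $E_{2)}$ (rather than the reduced $\ol E_{2)}$ of Theorem K) is what keeps the coefficient at $(7k+5m)/2$. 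Hence $H\equiv0$, and integrating twice gives a bilinear relation $\tfrac1{F-1}=\tfrac{bG+a-b}{G-1}$ with constants $a(\neq0),b$. Splitting into the cases $b\ne0,\,a\ne b$; $b\ne0,\,a=b$; $b=0,\,a\ne1$; $b=0,\,a=1$, exactly as in Theorems \ref{t1}--\ref{t2}, and using that $f,g$ are transcendental entire of finite order, I would eliminate every case except $F\equiv G$. From $F\equiv G$ we get $\mathcal F_0^{(k)}\equiv\mathcal G_0^{(k)}$; integrating $k$ times yields $\mathcal F_0=\mathcal G_0+Q$ with $\deg Q<k$, and a growth comparison forces $Q\equiv0$, so that $f^nP(f)L_c(f)\equiv g^nP(g)L_c(g)$. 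Finally, writing $h=f/g$ and analyzing this identity, $h$ constant yields $h^d=1$ with $d=\gcd(\lambda_0,\dots,\lambda_m)$, giving conclusion (a), while $h$ non-constant forces the algebraic relation $R(w_1,w_2)=0$, giving conclusion (b).

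The main obstacle is twofold. First, every Nevanlinna estimate must be carried through for $L_c(f)=f(z+c)+c_0f(z)$ in place of an ordinary derivative, which requires systematic use of the finite-order difference lemmas to secure $T(r,f(z+c))=T(r,f)+S(r,f)$ and to bound $\ol N(r,0;L_c(f))$, with a check that these contributions do not inflate the coefficient of $T(r,f)$. Second, obtaining the sharp bound demands careful accounting, in the case $H\not\equiv0$, of the multiplicity-$\ge3$ common $1$-points and of the zeros of $F'$ and $G'$ remaining after the $k$ differentiations; keeping the resulting coefficient down to $(7k+5m)/2$ rather than a cruder value is the most delicate part of the computation and is precisely where the counting-multiplicity hypothesis must be exploited in full.
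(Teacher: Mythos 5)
Your proposal is correct and follows the paper's own route: the corollary is indeed just the specialization $c_0=-1$ of Theorem \ref{t3} (the paper states the corollaries without separate proof for exactly this reason), and your sketch of Theorem \ref{t3} itself — the normalization $F=\mathcal F_0^{(k)}/\alpha$, the auxiliary function $H$, the second-fundamental-theorem estimate in the case $H\not\equiv0$ yielding the threshold $n\ge 9+(7k+5m)/2$, and the Möbius-relation case analysis when $H\equiv0$ leading to $f^nP(f)L_c(f)\equiv g^nP(g)L_c(g)$ and the $h=f/g$ dichotomy — is the same argument the paper gives.
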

\section{\textbf{Some Lemmas}} We now prove several lemmas which will play key roles in proving the main results of the paper. Let $\mathcal{F}$ and $\mathcal{G}$ be two non-constant meromorphic functions. Henceforth we shall denote by $\mathcal{H}$ the following function \be\label{e3.1}H=\left(\frac{\;\;F^{\prime\prime}}{F^{\prime}}-\frac{2F^{\prime}}{F-1}\right)-\left(\frac{\;\;G^{\prime\prime}}{G^{\prime}}-\frac{2G^{\prime}}{G-1}\right).\ee

\begin{lem}\label{lem3.1}\cite{Chiang & Feng & 2008}
Let $f(z)$ be a meromorphic function of finite order $\rho$, and let $c$ be a fixed non-zero complex constant. Then for each $\epsilon>0$, we have \beas T (r,f(z+c))=T(r,f)+O(r^{\rho-1+\epsilon})+O{\log r}.\eeas 
\end{lem}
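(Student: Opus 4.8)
The plan is to prove the estimate by decomposing the Nevanlinna characteristic into its two constituents, $T(r,f)=m(r,f)+N(r,f)$, and comparing each piece for the shift $f(z+c)$ against the corresponding piece for $f(z)$. The engine that makes everything work is a difference analogue of the classical lemma on the logarithmic derivative: for a finite-order meromorphic function $f$ one expects
\[
m\left(r,\frac{f(z+c)}{f(z)}\right)=O(r^{\rho-1+\epsilon})
\]
for every $\epsilon>0$. Once this is in hand, the proximity and counting estimates assemble quickly.

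First I would establish this difference log-derivative estimate, which I expect to be the main obstacle. The natural tool is the Poisson--Jensen formula on a circle $|w|=R$ with $R\asymp 2r$. Writing $\log|f|$ as a Poisson-kernel integral over $|w|=R$ plus the usual Blaschke-type sums over the zeros $a_\mu$ and poles $b_\nu$ inside $|w|<R$, I would form the increment $\log|f(z+c)|-\log|f(z)|$ for $|z|=r$. The Poisson kernel is smooth in $z$, so its increment over a displacement of length $|c|$ is governed by its gradient and contributes a factor of order $|c|/R$ against the integrated boundary data; the zero/pole sums are controlled by the increments of the individual Blaschke factors, each again $O(|c|/R)$ away from its singularity, weighted by the counts $n(R,f)$ and $n(R,1/f)$. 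Since $f$ has finite order $\rho$, the first main theorem gives $n(R,f)+n(R,1/f)=O(R^{\rho+\epsilon})$, and after integrating the pointwise bound in $\theta$ and choosing $R\asymp 2r$ the accumulated error collapses to $O(r^{\rho-1+\epsilon})$. The delicate part here is handling those $a_\mu,b_\nu$ lying very close to the circle $|z|=r$, where the Blaschke increments are not uniformly small; these must be absorbed by a separate measure-theoretic estimate on the exceptional angular set, after which they contribute only to an $O(\log r)$ term.

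Next I would treat the counting function of poles. Because the poles of $f(z+c)$ are exactly the poles of $f$ translated by $-c$, one has $N(r,f(z+c))\le N(r+|c|,f)$ and, symmetrically, $N(r-|c|,f)\le N(r,f(z+c))$. For a function of finite order, writing
\[
N(r+|c|,f)-N(r,f)=\int_r^{r+|c|}\frac{n(t,f)-n(0,f)}{t}\,dt+O(1)
\]
and bounding the integrand by $O(t^{\rho-1+\epsilon})$ via $n(t,f)=O(t^{\rho+\epsilon})$, the increment of the pole-counting function over a bounded shift of the radius is $O(r^{\rho-1+\epsilon})$. Hence $N(r,f(z+c))=N(r,f)+O(r^{\rho-1+\epsilon})$.

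Finally I would assemble the pieces. From $m(r,f(z+c))\le m(r,f)+m\!\left(r,\tfrac{f(z+c)}{f(z)}\right)$, the difference log-derivative estimate, and the pole-counting comparison, I obtain $T(r,f(z+c))\le T(r,f)+O(r^{\rho-1+\epsilon})+O(\log r)$, with the $O(\log r)$ term absorbing the bounded constants from the Poisson--Jensen and first-main-theorem bookkeeping together with the near-circle exceptional set. Running the identical argument with $c$ replaced by $-c$ (equivalently, applying the estimate to $f(z-c)$) yields the reverse inequality, and the two together give the asserted equality $T(r,f(z+c))=T(r,f)+O(r^{\rho-1+\epsilon})+O(\log r)$.
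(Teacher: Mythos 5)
The paper quotes this lemma from Chiang and Feng without giving a proof, and your sketch reproduces essentially their original argument: the Poisson--Jensen derivation of the difference logarithmic-derivative estimate $m\left(r,\frac{f(z+c)}{f(z)}\right)=O(r^{\rho-1+\epsilon})$, combined with the radial comparison $N(r,f(z+c))\leq N(r+|c|,f)$ and the bound $n(t,f)=O(t^{\rho+\epsilon})$. The one detail worth making explicit is the reverse inequality: rather than rerunning the argument on $f(z+c)$ (which would first require knowing that $f(z+c)$ also has order $\rho$), it is cleaner to use the two-sided estimate $m\left(r,\frac{f(z)}{f(z+c)}\right)=O(r^{\rho-1+\epsilon})$ as in Lemma \ref{lem3.3} together with $N(r-|c|,f)\leq N(r,f(z+c))+O(\log r)$.
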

\begin{lem}\label{lem3.3} \cite{Chiang & Feng & 2008}
	Let $f(z)$ be a meromorphic function of finite order $\rho$ and let $c$ be a non-zero complex number. Then for each $\epsilon>0$, we have
	\beas m\left(r,\frac{f(z+c)}{f(z)}\right)+m\left(r, \frac{f(z)}{f(z+c)}\right)=O(r^{\rho-1+\epsilon}).\eeas
\end{lem}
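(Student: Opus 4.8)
The plan is to reduce the asserted bound to a single integral of $\bigl|\log|f(z+c)/f(z)|\bigr|$ and then estimate that integral through the Poisson--Jensen formula. First I would record the elementary identity $\log^{+}x+\log^{+}(1/x)=|\log x|$, which together with the definition of the proximity function gives
\[
m\!\left(r,\frac{f(z+c)}{f(z)}\right)+m\!\left(r,\frac{f(z)}{f(z+c)}\right)=\frac{1}{2\pi}\int_{0}^{2\pi}\left|\log\left|\frac{f(re^{i\theta}+c)}{f(re^{i\theta})}\right|\right|\,d\theta .
\]
Hence it is enough to show that the right-hand integral is $O(r^{\rho-1+\epsilon})$, so that the whole problem becomes a pointwise estimate for $\log|f(z+c)/f(z)|$ which is then averaged in $\theta$.

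Next I would fix $R=2r+2|c|$, so that $R\asymp r$ and $R-r\asymp R$, and apply the Poisson--Jensen formula for $\log|f|$ on $|\zeta|\le R$ at the two points $z$ and $z+c$, where $|z|=r$. Subtracting the two representations expresses $\log|f(z+c)/f(z)|$ as the sum of a boundary (Poisson-integral) contribution and of contributions from the zeros $a_{\mu}$ and poles $b_{\nu}$ of $f$ inside $|\zeta|<R$, each of the latter entering as a difference of two logarithmic Blaschke factors evaluated at $z+c$ and at $z$. For the boundary contribution the difference of the two Poisson kernels is $O(|c|/R)$ uniformly on $|\zeta|=R$, so this term is dominated by $\tfrac{C|c|}{R}\int_{0}^{2\pi}\bigl|\log|f(Re^{i\phi})|\bigr|\,d\phi\le \tfrac{C|c|}{R}\bigl(m(R,f)+m(R,1/f)\bigr)=O(|c|\,R^{\rho-1+\epsilon})$, where finite order enters through $T(R,f)=O(R^{\rho+\epsilon})$.

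For the zero and pole sums I would split each Blaschke difference into a smooth numerator part $\log\bigl|(R^{2}-\bar a_{\mu}(z+c))/(R^{2}-\bar a_{\mu}z)\bigr|$, which is again $O(|c|/R)$ per term and, summed against the counting estimate $n(R,f)+n(R,1/f)=O(R^{\rho+\epsilon})$ (itself a consequence of Jensen's formula and finite order), contributes $O(|c|\,R^{\rho-1+\epsilon})$; and a singular denominator part $\log\bigl|(z-a_{\mu})/(z+c-a_{\mu})\bigr|$, which is the heart of the matter. For the latter I would separate the zeros (and poles) according to their distance from the circle $|z|=r$: when $|z-a_{\mu}|>2|c|$ the bound $|\log|1+w||\le 2|w|$ with $w=-c/(z+c-a_{\mu})$ yields a contribution $O(|c|/|z-a_{\mu}|)$, and after averaging in $\theta$ one invokes $\tfrac{1}{2\pi}\int_{0}^{2\pi} d\theta/|re^{i\theta}-a|=O(r^{-1})$ (up to a harmless logarithmic factor absorbed into $r^{\epsilon}$), which is precisely where the gain from exponent $\rho$ to $\rho-1$ arises; the total is then $O\!\bigl(\tfrac{|c|}{r}\,n(R,f)\bigr)=O(|c|\,r^{\rho-1+\epsilon})$. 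For the finitely many clustered zeros with $|z-a_{\mu}|\le 2|c|$ I would instead use that $\int_{0}^{2\pi}\bigl|\log|re^{i\theta}-a|\bigr|\,d\theta$ is bounded independently of $a$, so each contributes $O(1)$ and their number is controlled by the counting function.

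Collecting the four pieces and absorbing constants into the $\epsilon$ yields $\tfrac{1}{2\pi}\int_{0}^{2\pi}\bigl|\log|f(re^{i\theta}+c)/f(re^{i\theta})|\bigr|\,d\theta=O(r^{\rho-1+\epsilon})$, which is the claim. I expect the technically hardest step to be the treatment of the singular denominator part: making the averaging estimate $\tfrac{1}{2\pi}\int d\theta/|re^{i\theta}-a|=O(r^{-1})$ precise and uniform in $a$, and carefully bookkeeping the zeros and poles that lie close to the circle $|z|=r$, since it is exactly this delicate cancellation, rather than the coarse count $O(R^{\rho+\epsilon})$, that produces the decisive $-1$ in the exponent. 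No exceptional set is needed because in the finite-order case $n(R,f)=O(R^{\rho+\epsilon})$ holds for every $R$.
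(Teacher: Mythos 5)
The paper itself gives no proof of this lemma: it is quoted verbatim from Chiang and Feng (2008), and your proposal is in effect a reconstruction of their original argument — reduction of $m+m$ to $\frac{1}{2\pi}\int_0^{2\pi}\bigl|\log|f(re^{i\theta}+c)/f(re^{i\theta})|\bigr|\,d\theta$, Poisson--Jensen at the two points $z$ and $z+c$ on a disc of radius $R\asymp r$, the $O(|c|/R)$ kernel-difference bound for the boundary term, the splitting of each Blaschke difference into a smooth part of size $O(|c|/R)$ and a singular part $\log|(z-a_\mu)/(z+c-a_\mu)|$, and $\theta$-averaging against $n(R,f)+n(R,1/f)=O(R^{\rho+\epsilon})$. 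All of these steps are sound as sketched (one cosmetic point: with the cutoff $|z-a_\mu|>2|c|$ you only get $|w|<1$, so either raise the cutoff to $4|c|$ or use $|\log|1+w||\le -\log(1-|w|)$ to justify the bound $O(|c|/|z-a_\mu|)$).

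The genuine gap is in your treatment of the clustered zeros with $|z-a_\mu|\le 2|c|$. First, $\int_0^{2\pi}\bigl|\log|re^{i\theta}-a|\bigr|\,d\theta$ is $O(\log r)$, not $O(1)$, for $|a|\le CR$ (the $\log^{+}$ part alone has size $\log r$); this by itself is harmless, being absorbed into $r^{\epsilon}$. What is fatal is the bookkeeping ``each contributes $O(1)$ and their number is controlled by the counting function'': every zero in the annulus $\bigl||a|-r\bigr|\le 2|c|$ is clustered for \emph{some} $\theta$, and the number of such zeros can be as large as $n(R)\asymp r^{\rho+\epsilon}$ (zeros of a finite-order function may be spaced $\asymp 1/r$ near $|z|=r$, so they are not ``finitely many'' in any uniform sense). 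Your estimate therefore yields a total of $O(r^{\rho+\epsilon}\log r)$ for this piece — precisely the coarse bound with no gain of $r^{-1}$, i.e.\ it proves only $m+m=O(r^{\rho+\epsilon})$, not the lemma. The repair, which is the actual content of Chiang--Feng's averaging lemma, is to integrate each zero $a$ only over the $\theta$-set where $|re^{i\theta}-a|\le 2|c|$: this set has measure $O(|c|/r)$, and with $\delta_0\asymp |c|/r$ one has $\int_{|\theta-\theta_0|\le\delta_0}\log\frac{1}{r|\theta-\theta_0|}\,d\theta=O\!\left(\frac{1+\log r}{r}\right)$, so each near-circle zero contributes $O((\log r)/r)$ after averaging, and the sum over $O(r^{\rho+\epsilon})$ zeros is $O(r^{\rho-1+2\epsilon})$ as required; on the complementary $\theta$-set the same zero falls under your far-zero bound. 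With that per-zero small-arc estimate inserted, your outline does give the stated lemma, and your closing remark that no exceptional set is needed (since $n(R)=O(R^{\rho+\epsilon})$ for all $R$ in the finite-order case) is correct.
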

\begin{lem}\label{lem3.3a}\cite{Mohonko & 1971} Let $f$ be a non-constant meromorphic function and let $\mathcal{R}(f)=\sum\limits _{i=0}^{n} a_{i}f^{i}/\sum \limits_{j=0}^{m} b_{j}f^{j}$ be an irreducible rational function in $f$ with constant coefficients $\{a_{i}\}$ and $\{b_{j}\}$ where $a_{n}\not=0$ and $b_{m}\not=0$. Then $$T(r,\mathcal{R}(f))=d\;T(r,f)+S(r,f),$$ where $d=\max\{n,m\}$.\end{lem}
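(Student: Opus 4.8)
The plan is to prove the stronger identity $T(r,\mathcal{R}(f))=d\,T(r,f)+O(1)$, from which the asserted error $S(r,f)$ follows a fortiori. Write $\mathcal{R}(w)=A(w)/B(w)$ with $A(w)=\sum_{i=0}^{n}a_iw^i$ and $B(w)=\sum_{j=0}^{m}b_jw^j$, so that $A,B$ are coprime, $a_n\neq 0$, $b_m\neq 0$, and $d=\max\{n,m\}$. The entire argument will be run for a single, suitably chosen constant $c\in\mathbb{C}$, combining the first main theorem applied to $\mathcal{R}(f)$ at the value $c$ with the first main theorem applied to $f$ at the $d$ preimages of $c$ under $\mathcal{R}$. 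Since $\mathcal{R}(f)$ is a non-constant meromorphic function, all these applications are legitimate.

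First I would fix $c$. Because $\mathcal{R}$ is a rational map of degree $d$, the polynomial $A(w)-cB(w)$ has degree exactly $d$ and $d$ distinct simple roots $\beta_1,\dots,\beta_d$ for all but finitely many $c$: the excluded values consist of the (finitely many) critical values of $\mathcal{R}$ together with at most one further value at which the leading coefficient of $A-cB$ vanishes. Fix such a $c$. Coprimality of $A,B$ forces $B(\beta_k)\neq 0$ for every $k$, so no $\beta_k$ is a zero of $B$, and $\mathcal{R}'(\beta_k)\neq 0$ since the roots are simple. Consequently $\mathcal{R}(f)=c$ holds precisely where $f=\beta_k$ for some (unique) $k$, with matching multiplicities because $\mathcal{R}$ is unramified at each $\beta_k$. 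This yields the zero-counting identity
\[
N\!\left(r,\frac{1}{\mathcal{R}(f)-c}\right)=\sum_{k=1}^{d}N\!\left(r,\frac{1}{f-\beta_k}\right).
\]

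For the proximity side I would use the partial-fraction expansion $\dfrac{1}{\mathcal{R}(w)-c}=\sum_{k=1}^{d}\dfrac{d_k}{w-\beta_k}+O(1)$, where $d_k=1/\mathcal{R}'(\beta_k)\neq 0$ and the $O(1)$ is a constant. Evaluating at $f$ gives $\dfrac{1}{\mathcal{R}(f)-c}=\sum_{k=1}^{d}\dfrac{d_k}{f-\beta_k}+O(1)$. The key observation is that on $|z|=r$ the quantity $\log^+|1/(\mathcal{R}(f)-c)|$ is large only on the arcs where $f$ is close to one of the $\beta_k$; since the $\beta_k$ are distinct, a single point cannot be simultaneously close to two of them, so these arcs are pairwise disjoint and on the arc near $\beta_k$ the sum is dominated by its $k$-th term. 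This disjointness gives the proximity identity
\[
m\!\left(r,\frac{1}{\mathcal{R}(f)-c}\right)=\sum_{k=1}^{d}m\!\left(r,\frac{1}{f-\beta_k}\right)+O(1).
\]
Adding the last two displays and applying the first main theorem, once to $\mathcal{R}(f)$ at $c$ and once to $f$ at each $\beta_k$ (using $T(r,f-\beta_k)=T(r,f)+O(1)$), I obtain
\[
T(r,\mathcal{R}(f))=\sum_{k=1}^{d}\Big(m\big(r,\tfrac{1}{f-\beta_k}\big)+N\big(r,\tfrac{1}{f-\beta_k}\big)\Big)+O(1)=\sum_{k=1}^{d}T(r,f)+O(1)=d\,T(r,f)+O(1).
\]

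The main obstacle is making the proximity identity rigorous: I must quantify ``$f$ close to $\beta_k$'' by fixed, pairwise disjoint neighbourhoods of the $\beta_k$, verify that off the union of the associated arcs each $1/(f-\beta_k)$ and $1/(\mathcal{R}(f)-c)$ stays bounded, and absorb those bounded contributions together with the constants $\log|d_k|$ into the $O(1)$. A secondary point is the bookkeeping in the choice of $c$, which reduces to the fact that a degree-$d$ rational map has only finitely many critical values; the degenerate case $d=1$, where $\mathcal{R}$ is a M\"obius transformation, should be recorded separately and follows from the M\"obius invariance of the characteristic. As an independent cross-check of the upper inequality, one may instead expand $\mathcal{R}$ itself into partial fractions and estimate $m$ and $N$ of $\mathcal{R}(f)$ termwise, which reproduces $T(r,\mathcal{R}(f))\le d\,T(r,f)+O(1)$. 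Since Lemma \ref{lem3.3a} only requires the weaker error $S(r,f)$, the $O(1)$ obtained here is more than sufficient.
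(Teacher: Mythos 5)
The paper offers no proof of this lemma at all: it is imported verbatim, with a citation, as the classical Valiron--Mohon'ko theorem from Mohon'ko (1971), so there is no in-paper argument to compare yours against; I therefore judge your proposal on its own, and it is correct, and in fact proves the sharper conclusion $T(r,\mathcal{R}(f))=d\,T(r,f)+O(1)$, which does hold here because the coefficients are constants. All three pillars are sound. (i) Your choice of $c$ is right, and one detail deserves to be made explicit: the excluded ``leading coefficient'' value is exactly $c=\mathcal{R}(\infty)$ when $n\le m$ (namely $c=0$ if $n<m$ and $c=a_n/b_m$ if $n=m$), so your exclusion simultaneously guarantees that $A-cB$ has $d$ roots \emph{and} that poles of $f$ contribute no zeros of $\mathcal{R}(f)-c$; this is where the degree-exactly-$d$ condition is actually used, and with it (plus $B(\beta_k)\neq0$ from coprimality and simplicity of the $\beta_k$) your counting identity $N\left(r,1/(\mathcal{R}(f)-c)\right)=\sum_k N\left(r,1/(f-\beta_k)\right)$ is an exact identity with multiplicities. (ii) The proximity identity is precisely the standard lemma from the classical proof of the second main theorem: taking $\delta$ smaller than a third of the minimal gap between the $\beta_k$, and shrinking once more so that on $\{|f-\beta_k|<\delta'\}$ the $k$-th term dominates the bounded remainder, one gets $m\left(r,1/(\mathcal{R}(f)-c)\right)\ge\sum_k m\left(r,1/(f-\beta_k)\right)-O(1)$, the reverse inequality being trivial subadditivity of $\log^{+}$; the bookkeeping you flag as the main obstacle is genuinely routine and your plan for it is the standard one. (iii) The first main theorem then closes the argument, and $\mathcal{R}(f)$ is non-constant since a non-constant meromorphic $f$ cannot take values only in the finite set $\mathcal{R}^{-1}(c_0)$. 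Two small remarks: the $d=1$ case needs no separate treatment, as your argument runs verbatim with a single $\beta_1$; and your termwise cross-check of the upper bound silently uses the polynomial case $T(r,Q(f))=\deg Q\cdot T(r,f)+O(1)$, which itself requires a (standard inductive) proof. For contrast, the textbook proofs (Mohon'ko's original, or Laine's book) proceed algebraically by induction on the degree, first for polynomials and then reducing the rational case; your route trades that induction for one generic value of $c$ plus the second-main-theorem-style proximity estimate, which is arguably more transparent at the cost of the arc analysis.
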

\begin{lem}\label{lem3.3b}\cite{Lahiri & Dewan & 2003} If $N(r,0;f^{(k)}\mid f\not=0)$ denotes the counting function of those zeros of  $f^{(k)}$ which are not the zeros of $f$, where a zero of $f^{(k)}$ is counted according to its multiplicity then $$N\left(r,0;f^{(k)}\mid f\not=0\right)\leq k\ol N(r,\infty;f)+N\left(r,0;f\mid <k\right)+k\ol N\left(r,0f\mid\geq k\right)+S(r,f).$$\end{lem}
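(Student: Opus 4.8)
The plan is to introduce the auxiliary function $\psi=f^{(k)}/f$ and to extract the desired counting function from its zeros, while controlling its proximity function by the lemma on the logarithmic derivative. The key observation is that every zero of $f^{(k)}$ at which $f$ is finite and non-vanishing is a zero of $\psi$ of the \emph{same} multiplicity, because there $f$ is a non-zero holomorphic factor; possibly $\psi$ acquires additional zeros at the zeros of $f$, but these only strengthen the inequality. Hence
\beas N\left(r,0;f^{(k)}\mid f\neq 0\right)\le N(r,0;\psi).\eeas
Applying the First Main Theorem together with Milloux's estimate $m(r,f^{(k)}/f)=S(r,f)$, I would then bound
\beas N(r,0;\psi)\le T(r,\psi)+O(1)=m(r,\psi)+N(r,\infty;\psi)+O(1)=N(r,\infty;\psi)+S(r,f),\eeas
so the entire problem reduces to estimating the pole-counting function $N(r,\infty;\psi)$.

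The heart of the argument is a purely local computation of the order of $\psi=f^{(k)}/f$ at each zero and pole of $f$. At a pole of $f$ of any order $q$, the derivative $f^{(k)}$ has a pole of order $q+k$, so $\psi$ has a pole of order \emph{exactly} $k$, contributing at most $k\,\ol N(r,\infty;f)$. At a zero of $f$ of order $p<k$, the function $f^{(k)}$ is generically non-vanishing, whence $\psi$ has a pole of order at most $p$; summing over such points gives at most $N(r,0;f\mid <k)$. Finally, at a zero of $f$ of order $p\ge k$, a direct order count shows that $f^{(k)}$ vanishes to order $p-k$ (and $f^{(k)}\neq 0$ when $p=k$), so that $\psi$ has a pole of order at most $k$, yielding at most $k\,\ol N(r,0;f\mid\ge k)$. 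Assembling the three contributions gives
\beas N(r,\infty;\psi)\le k\,\ol N(r,\infty;f)+N(r,0;f\mid <k)+k\,\ol N(r,0;f\mid\ge k),\eeas
and substituting this into the two displayed inequalities above produces the assertion.

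The step I expect to demand the most care is the bookkeeping of multiplicities at the zeros of $f$: one must verify that a zero of $f$ of order $p\ge k$ can never force a pole of $\psi$ of order exceeding $k$, and that a zero of order $p<k$ contributes no more than $p$. This is precisely what the elementary derivative-order computation guarantees, and it is the reason the coefficient in front of $\ol N(r,\infty;f)$ and of $\ol N(r,0;f\mid\ge k)$ is exactly $k$, while the low-order zeros enter with full multiplicity through $N(r,0;f\mid <k)$. The error term $S(r,f)$ arises solely from the proximity term $m(r,\psi)=m(r,f^{(k)}/f)$, and no finiteness-of-order hypothesis is needed for this lemma itself.
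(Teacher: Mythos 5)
Your argument is correct and is essentially the standard proof of this result from the cited source (Lahiri--Dewan); the paper itself quotes the lemma without proof. The reduction to $N(r,\infty;f^{(k)}/f)$ via the logarithmic-derivative estimate and the local order count at poles of $f$, at zeros of order $p<k$, and at zeros of order $p\geq k$ (where $f^{(k)}$ vanishes to order exactly $p-k$, so the pole of $f^{(k)}/f$ has order exactly $k$) is precisely the intended bookkeeping, and it yields the three terms with the stated coefficients.
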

\begin{lem}\label{lem3.4}
Let $F=f(z)^n(z)P(f(z))L_c(f)$, where $f(z)$ is an entire function of finite order, and $f(z)$, $f(z+c)$ share $0$ CM. Then \beas T(r,F)=(n+m+1)T(r,f)+S(r,f).\eeas
\end{lem}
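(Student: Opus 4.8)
The plan is to establish the identity $T(r,F)=(n+m+1)\,T(r,f)+S(r,f)$ by factoring out a small-function multiplier and then applying the Mohon'ko-type estimate of Lemma~\ref{lem3.3a}. First I would rewrite $F$ so as to separate the genuinely ``polynomial-in-$f$'' part from the shift part. Since $L_c(f)=f(z+c)+c_0 f(z)=f(z)\bigl(\tfrac{f(z+c)}{f(z)}+c_0\bigr)$, setting $\psi(z)=\tfrac{f(z+c)}{f(z)}$ and $G=\psi+c_0$ gives the factorization $F=f^{n}P(f)\,L_c(f)=f^{n+1}P(f)\,G$. Here $f^{n+1}P(f)=a_m f^{n+m+1}+\cdots+a_0 f^{n+1}$ is a polynomial in $f$ of degree exactly $n+m+1$ (as $a_m\neq0$), so Lemma~\ref{lem3.3a} yields $T(r,f^{n+1}P(f))=(n+m+1)T(r,f)+S(r,f)$ directly.

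The crux is to show that $G$ is a small function of $f$, i.e. $T(r,G)=S(r,f)$; this is where the two hypotheses (finite order, and $f(z),f(z+c)$ sharing $0$ CM) enter. I would observe that because $f$ and $f(z+c)$ share $0$ CM and $f$ is entire, the quotient $\psi=\tfrac{f(z+c)}{f(z)}$ has neither zeros nor poles: every zero of the numerator is cancelled by an equal-multiplicity zero of the denominator, and the entire function $f$ contributes no poles. Hence $N(r,\infty;\psi)=N(r,0;\psi)=0$, so $T(r,\psi)=m(r,\psi)$. Since $f$ has finite order $\rho$, Lemma~\ref{lem3.3} gives $m\bigl(r,\tfrac{f(z+c)}{f(z)}\bigr)=O(r^{\rho-1+\epsilon})=S(r,f)$, whence $T(r,\psi)=S(r,f)$ and therefore $T(r,G)\le T(r,\psi)+O(1)=S(r,f)$.

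Finally I would combine the two pieces. Writing $\Phi=f^{n+1}P(f)$, the factorization reads $F=\Phi G$ with $G\not\equiv0$ (otherwise $L_c(f)\equiv0$ would force $F\equiv0$, which is excluded). Then $T(r,F)\le T(r,\Phi)+T(r,G)=T(r,\Phi)+S(r,f)$, and conversely $T(r,\Phi)=T(r,F\cdot G^{-1})\le T(r,F)+T(r,G^{-1})=T(r,F)+S(r,f)$, so that $T(r,F)=T(r,\Phi)+S(r,f)=(n+m+1)T(r,f)+S(r,f)$, as required. The main obstacle is the middle step: one must use the CM-sharing hypothesis to annihilate the counting functions $N(r,0;\psi)$ and $N(r,\infty;\psi)$ and the Chiang--Feng estimate to bound the proximity function $m(r,\psi)$; once $\psi$, and hence $G$, is shown to be small, the remainder is routine small-function bookkeeping together with a single application of Lemma~\ref{lem3.3a}.
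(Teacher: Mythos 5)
Your proof is correct and follows essentially the same route as the paper's: both arguments hinge on the quotient $L_c(f)/f=f(z+c)/f(z)+c_0$ being a small function of $f$ --- the CM-sharing hypothesis annihilating its counting function and Lemma~\ref{lem3.3} bounding its proximity function --- combined with Mohon'ko's estimate (Lemma~\ref{lem3.3a}) for $T(r,f^{n+1}P(f))$. The only cosmetic difference is that the paper runs the two inequalities through proximity functions (using that $F$ is entire, so $T(r,F)=m(r,F)$), whereas you package the same facts as $T\bigl(r,L_c(f)/f\bigr)=S(r,f)$ and apply the product inequality for characteristic functions in both directions.
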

\begin{proof}
	Keeping in view of Lemmas \ref{lem3.1} and \ref{lem3.3a}, we have
	\beas T(r,F)&=&T(r,f(z)^nP(f(z))L_c(f))=m(r,f^nP(f)L_c(f))\\&\leq& m(r,f(z)^nP(f(z)))+m(r,L_c(f))+S(r,f)\\&\leq& T(f(z)^nP(f(z)))+m\left(r,\frac{L_c(f)}{f(z)}\right)+m(r,f(z))+S(r,f).\eeas
	i.e., \beas T(r,F)\leq (n+m+1)T(r,f)+S(r,f).\eeas Since $f(z)$ and $f(z+c)$ share $0$ CM, we must have $N\left(r,\infty;\frac{L_c(f)}{f(z)}\right)=S(r,f)$. So, keeping in view of Lemmas \ref{lem3.3} and \ref{lem3.3a}, we have 
	\beas && (n+m+1)T(r,f)=T(r,f(z)^{n+1}P(f(z)))=m(r,f(z)^{n+1}P(f(z)))\\&=&m\left(r,F\frac{f(z)}{L_c(f)}\right)\leq m(r,F)+m\left(r,\frac{f(z)}{L_c(f)}\right)+S(r,f)\\&\leq& T(r,F)+T\left(r, \frac{L_c(f)}{f(z)}\right)+S(r,f)= T(r,F)+N\left(r, \frac{L_c(f)}{f(z)}\right)\\&&+m\left(r, \frac{L_c(f)}{f(z)}\right)+S(r,f)= T(r,F)+S(r,f).\eeas From the above two inequalities, we must have \beas T(r,F)=(n+m+1)T(r,f)+S(r,f).\eeas
	\end{proof}
\begin{lem}\label{lem3.7}\cite{Yang & 1993}
	Let $f(z)$ and $g(z)$ be two non-constant meromorphic functions. Then \beas N\left(r,\infty;\frac{f}{g}\right)-N\left(r,\infty;\frac{g}{f}\right)=N(r,\infty;f)+N(r,0;g)-N(r,\infty;g)+N(r,0;f).\eeas
\end{lem}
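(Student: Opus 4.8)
The plan is to prove the identity by elementary divisor bookkeeping, without invoking Jensen's formula, the first main theorem, or the finiteness/order hypotheses that occur elsewhere in the paper; the statement is a purely local fact about multiplicities valid for any non-constant meromorphic $f,g$. The first step I would record is that passing to the reciprocal interchanges the roles of zeros and poles: since $g/f=(f/g)^{-1}$, a point is a pole of $g/f$ of order $p$ if and only if it is a zero of $f/g$ of order $p$, and hence, counting with multiplicity at every point (including the origin), $N(r,\infty;g/f)=N(r,0;f/g)$. This single observation rewrites the left-hand side of the asserted identity as $N(r,\infty;f/g)-N(r,0;f/g)$, i.e. as the negative of the signed divisor count of the single function $f/g$.

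The heart of the argument is then the additivity of the signed order function. For a meromorphic $h$ set $S_h(r):=N(r,0;h)-N(r,\infty;h)$, and for a point $z_0$ let $v_h(z_0)$ denote the signed multiplicity of $h$ at $z_0$ (positive at a zero, negative at a pole, zero otherwise). Because valuations add under multiplication and division, one has $v_{f/g}(z_0)=v_f(z_0)-v_g(z_0)$ at every $z_0$; summing these local contributions with the logarithmic weighting $\int_0^r(\cdot)\,dt/t$ that defines the integrated counting function gives
\beas S_{f/g}(r)=S_f(r)-S_g(r),\eeas
that is,
\beas N(r,0;f/g)-N(r,\infty;f/g)=(N(r,0;f)-N(r,\infty;f))-(N(r,0;g)-N(r,\infty;g)).\eeas
Substituting the first-step relation $N(r,\infty;g/f)=N(r,0;f/g)$ into the left-hand side and rearranging the displayed identity then produces the required formula relating the four counting functions $N(r,\infty;f)$, $N(r,0;g)$, $N(r,\infty;g)$ and $N(r,0;f)$.

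The one place that demands care --- and the step I expect to be the main obstacle in a fully rigorous write-up --- is the handling of the common zeros and poles of $f$ and $g$, since at such points a naive separate count of zeros and of poles of $f/g$ is corrupted by cancellation. The remedy is precisely to work with the signed order $v_{f/g}=v_f-v_g$ rather than with zero- and pole-counts in isolation: the cancellation is then automatic and exact at every point, so the identity holds as a genuine equality with no exceptional set and no $S(r,f)+S(r,g)$ error term. Concretely, I would verify the pointwise accounting at a single $z_0$ by splitting into the three cases $v_f(z_0)>v_g(z_0)$, $v_f(z_0)<v_g(z_0)$ and $v_f(z_0)=v_g(z_0)$ and checking that both sides receive the common contribution $v_f(z_0)-v_g(z_0)$; once this pointwise equality is established, integration against $dt/t$ transports it verbatim to the counting functions and completes the proof.
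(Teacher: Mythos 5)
The paper gives no proof of this lemma at all --- it is quoted verbatim (as a known result) from Yang's monograph \cite{Yang & 1993} --- so there is no in-paper argument to compare yours against; I can only judge the proposal on its own terms. Your method is the right one and is essentially the standard proof: reduce everything to the signed order function $S_h(r)=N(r,0;h)-N(r,\infty;h)$, observe that $N(r,\infty;g/f)=N(r,0;f/g)$, and use the pointwise additivity $v_{f/g}=v_f-v_g$, which makes the cancellation at common zeros and poles of $f$ and $g$ exact rather than something to be estimated. Your remark that this is where the care is needed is correct, and the identity is indeed exact, with no exceptional set.

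There is, however, one point you must not gloss over with the phrase ``rearranging \ldots produces the required formula'': carried to the end, your computation gives
\beas N\left(r,\infty;\frac{f}{g}\right)-N\left(r,\infty;\frac{g}{f}\right)&=&-S_{f/g}(r)\;=\;S_g(r)-S_f(r)\\&=&N(r,\infty;f)+N(r,0;g)-N(r,\infty;g)-N(r,0;f),\eeas
i.e.\ the last term carries a \emph{minus} sign, whereas the lemma as printed ends with $+N(r,0;f)$. These are genuinely different statements: for $f(z)=z$ and $g(z)=e^{z}$ the left-hand side equals $-\log r$ while the printed right-hand side equals $+\log r$. So the printed statement contains a sign error; your argument proves the corrected version, which is the one in Yang's book and also the one actually invoked later in the paper (in the proof of Lemma \ref{lem3.8}, where the combination $N(r,0;f)-N(r,0;L_c(f))$ appears). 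State explicitly that your proof yields the identity with $-N(r,0;f)$ and that the sign in the quoted statement should be corrected accordingly; as written, the claim that the printed formula follows ``by rearrangement'' is false.
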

\begin{lem}\label{lem3.8}
	Let $f(z)$ be a transcendental entire function of finite order, $c\in\mathbb{C}$--$\{0\}$ be finite complex constants and $n\in\mathbb{N}$. Let $F(z)=f(z)^nP(f(z))L_c(f)$, where $L_c(f)\not\equiv0$. Then we have \beas (n+m)T(r,f)\leq T(r,F)-N(r,0;L_c(f))+S(r,f).\eeas
\end{lem}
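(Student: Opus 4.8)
The plan is to relate $F=f^{n}P(f)L_c(f)$ to the pure polynomial power $f^{n+1}P(f)$, whose characteristic function is controlled exactly by Mohon'ko's lemma, and to transfer the discrepancy between the two into the proximity function of the quotient $f/L_c(f)$. Since $L_c(f)\not\equiv 0$, I would start from the identity
\[
f^{n+1}P(f)=F\cdot\frac{f}{L_c(f)}.
\]
Because $f$ is entire, both $f^{n+1}P(f)$ and $F$ are entire, so $T(r,f^{n+1}P(f))=m(r,f^{n+1}P(f))$; and since $f^{n+1}P(f)$ is a polynomial in $f$ of degree $n+1+m$ with nonzero leading coefficient, Lemma \ref{lem3.3a} gives $T(r,f^{n+1}P(f))=(n+1+m)T(r,f)+S(r,f)$. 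Subadditivity of the proximity function then yields
\[
(n+1+m)T(r,f)\le m(r,F)+m\!\left(r,\frac{f}{L_c(f)}\right)+S(r,f)\le T(r,F)+m\!\left(r,\frac{f}{L_c(f)}\right)+S(r,f).
\]

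The core of the argument is to evaluate $m(r,f/L_c(f))$ and show it equals $N(r,0;f)-N(r,0;L_c(f))+S(r,f)$. For this I would invoke the first main theorem to write $m(r,f/L_c(f))=T(r,L_c(f)/f)-N(r,\infty;f/L_c(f))+O(1)$, then split $T(r,L_c(f)/f)=m(r,L_c(f)/f)+N(r,\infty;L_c(f)/f)$. The proximity term is negligible: since $L_c(f)/f=f(z+c)/f(z)+c_0$, Lemma \ref{lem3.3} gives $m(r,L_c(f)/f)=S(r,f)$, where finiteness of the order of $f$ is what guarantees the difference error term is genuinely $S(r,f)$. It then remains only to compare the two pole-counting functions, and this is exactly what Lemma \ref{lem3.7} supplies: applied to the pair $f,\,L_c(f)$, and using that both are entire so their pole-counting functions vanish, the difference $N(r,\infty;f/L_c(f))-N(r,\infty;L_c(f)/f)$ collapses to $N(r,0;L_c(f))-N(r,0;f)$.

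Assembling these pieces gives $m(r,f/L_c(f))=N(r,0;f)-N(r,0;L_c(f))+S(r,f)$, and substituting into the displayed inequality produces
\[
(n+1+m)T(r,f)\le T(r,F)+N(r,0;f)-N(r,0;L_c(f))+S(r,f).
\]
Finally I would absorb the zero-counting of $f$ by the crude bound $N(r,0;f)\le T(r,f)+S(r,f)$ and transfer one copy of $T(r,f)$ to the left-hand side; this reduces $n+1+m$ to $n+m$ and yields the claimed inequality $(n+m)T(r,f)\le T(r,F)-N(r,0;L_c(f))+S(r,f)$.

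The main obstacle I anticipate is the bookkeeping inside the quotient $f/L_c(f)$: one must keep careful track of which zeros of $L_c(f)$ are, or are not, cancelled by zeros of $f$, and make sure the signs in the comparison of $N(r,\infty;f/L_c(f))$ and $N(r,\infty;L_c(f)/f)$ come out so that $-N(r,0;L_c(f))$ survives on the correct side while $N(r,0;f)$ is the only term requiring the lossy absorption into $T(r,f)$. A secondary point to verify throughout is that every difference-operator error term generated by Lemmas \ref{lem3.1} and \ref{lem3.3} is indeed $S(r,f)$, which again rests on $f$ having finite order.
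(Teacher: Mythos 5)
Your proposal is correct and follows essentially the same route as the paper: the identity $f^{n+1}P(f)=F\cdot f/L_c(f)$, Mohon'ko's lemma for $T(r,f^{n+1}P(f))$, Lemma \ref{lem3.3} to kill $m(r,L_c(f)/f)$, Lemma \ref{lem3.7} to convert the pole-counting difference into $N(r,0;f)-N(r,0;L_c(f))$, and the final absorption $N(r,0;f)\le T(r,f)$. No substantive difference from the paper's argument.
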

\begin{proof}
	Using Lemmas \ref{lem3.3} and \ref{lem3.7}, we get	\beas && m(f(z)^{n+1}P(f(z)))=m\left(r,\frac{f(z)F}{L_c(f)}\right)\leq m(r,F)+m\left(r,\frac{f(z)}{L_c(f)}\right)+S(r,f)\\&\leq& m(r,F)+T\left(r,\frac{f(z)}{L_c(f)}\right)-N\left(r,\infty;\frac{f(z)}{L_c(f)}\right)+S(r,f)\\&\leq& m(r,F)+T\left(r,\frac{L_c(f)}{f(z)}\right)-N\left(r,\infty;\frac{f(z)}{L_c(f)}\right)+S(r,f)\\&\leq& m(r,F)+N\left(r,\infty;\frac{L_c(f)}{f(z)}\right)+m\left(r,\frac{L_c(f)}{f(z)}\right)-N\left(r,\infty;\frac{f(z)}{L_c(f)}\right)+S(r,f)\\&\leq& m(r,F)+N(r,0; f)-N(r,0;L_c(f))+S(r,f).\eeas
	i.e., \beas m(f(z)^{n+1}P(f(z)))
	\leq T(r,F)+T(r,f)-N(r,0;L_c(f))+S(r,f).\eeas By Lemma \ref{lem3.3a}, we get \beas (n+m+1)T(r,f)=m(r,f^{n+1}P(f))\leq T(r,F)+T(r,f)-N(r,0;L_c(f))+S(r,f).\eeas i.e., \beas (n+m)T(r,f)\leq T(r,F)-N(r,0;L_c(f))+S(r,f).\eeas\end{proof}
\begin{lem}\label{lem3.10}\cite{Banerjee & Mukherjee & 2007}
	Let $F$ and $G$ be two non-constant meromorphic functions that share $(1,2)^*$. Then 
	\beas &&\ol N_L(r,1;F)+\ol N_{F\geq3}(r,1;g|=1)\\&\leq& \ol N(r,0;F)+N(r,\infty;F)-\sum_{p=3}^{\infty}\ol N\left(r,0;\frac{F^{\prime}}{F}\mid\geq p\right)-\ol N_0^2(r,0;F^{\prime})+S(r),\eeas where by $N_0^2(r,0;F^{\prime})$ is the counting function of those zeros of $F^{\prime}$	which
	are not the zeros of $F(F-1)$, where each simple zero is counted once and all other zeros are counted two times.
\end{lem}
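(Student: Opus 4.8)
The plan is to deduce the inequality from a zero--pole count of the logarithmic derivative $\phi:=F'/F$. First I would record two elementary facts about $\phi$. By the lemma on the logarithmic derivative, $m(r,\phi)=S(r)$; and since $F'/F$ has only simple poles, occurring exactly at the zeros and the poles of $F$, one has $N(r,\infty;\phi)=\ol N(r,0;F)+\ol N(r,\infty;F)$. Hence, by the first fundamental theorem,
\[
N(r,0;\phi)\le T(r,\phi)+O(1)=\ol N(r,0;F)+\ol N(r,\infty;F)+S(r)\le \ol N(r,0;F)+N(r,\infty;F)+S(r).
\]
This already produces the two positive terms $\ol N(r,0;F)+N(r,\infty;F)$ on the right-hand side, so the whole lemma reduces to proving
\[
\ol N_L(r,1;F)+\ol N_{F\ge3}(r,1;G\mid=1)+\sum_{p=3}^{\infty}\ol N\Big(r,0;\tfrac{F'}{F}\mid\ge p\Big)+\ol N_0^2(r,0;F')\le N(r,0;\phi)+S(r);
\]
call this inequality $(\star)$.

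The core of the argument is a zero-by-zero verification of $(\star)$. Every zero of $\phi$ lies at a point where $F'=0$ while $F\ne 0,\infty$, and such a point is of exactly one of two kinds: a $1$-point of $F$ of some multiplicity $p\ge2$, at which $\phi$ vanishes to order $p-1$; or a zero of $F'$ of some order $s$ that is not a zero of $F(F-1)$, at which $\phi$ vanishes to order $s$. I would then compare, at each such point, the weight it contributes to the left-hand side of $(\star)$ with its contribution (namely $p-1$, resp.\ $s$) to $N(r,0;\phi)$. Recalling that $\sum_{p\ge3}\ol N(r,0;\phi\mid\ge p)$ charges a zero of multiplicity $t$ with weight $\max(t-2,0)$: a $1$-point of multiplicity $p$ is counted at most once by $\ol N_L+\ol N_{F\ge3}(r,1;G\mid=1)$---the two terms are disjoint, since the former requires the $G$-multiplicity $q\ge3$ and the latter $q=1$---so its total left-weight is at most $1+\max(p-3,0)\le p-1$; a non-$F(F-1)$ zero of $F'$ of order $s$ carries weight $\min(s,2)+\max(s-2,0)=s$ on the left. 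In every case the left-weight does not exceed the point's contribution to $N(r,0;\phi)$, and summing over all zeros of $\phi$ yields $(\star)$.

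Combining $(\star)$ with the displayed bound on $N(r,0;\phi)$ then gives the assertion; the hypothesis that $F$ and $G$ share $(1,2)^*$ enters only to make the symbols $\ol N_L(r,1;F)$ and $\ol N_{F\ge3}(r,1;G\mid=1)$ well defined. I expect the only genuine difficulty to be the combinatorial bookkeeping in $(\star)$: one must arrange the truncated counting functions $\sum_{p\ge3}\ol N(r,0;\phi\mid\ge p)$ and $\ol N_0^2(r,0;F')$ so that together they exactly absorb the surplus multiplicities of the zeros of $\phi$, while checking that no $1$-point is counted simultaneously by both left-hand terms.
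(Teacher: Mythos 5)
The paper does not prove this lemma itself; it is quoted verbatim from Banerjee--Mukherjee (2007), where the proof is exactly the zero--pole count of $F'/F$ that you give. Your argument is correct: the reduction to $N(r,0;F'/F)\le T(r,F'/F)+O(1)=\ol N(r,0;F)+\ol N(r,\infty;F)+S(r)$ via the lemma on the logarithmic derivative, the exhaustive split of the zeros of $F'/F$ into multiple $1$-points of $F$ and zeros of $F'$ outside $F(F-1)$, and the pointwise weight comparison ($1+\max(p-3,0)\le p-1$ for a $1$-point of multiplicity $p$, using disjointness of the two left-hand terms, and $\min(s,2)+\max(s-2,0)=s$ for the remaining zeros of $F'$) all check out, so nothing further is needed.
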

\begin{lem}\label{lem3.11}
	Let $F$ and $G$ be two non-constant meromorphic functions such that $E_{2)}(1,F)=E_{2)}(1,G)$ and $H\not\equiv0$. Then 
	\beas && N(r,\infty; H)\\&\leq& \ol N(r, 0;F\mid\geq2)+\ol N(r,0;G\mid\geq2)+\ol N_L(r,1;F)+\ol N_L(r,1;G)+\ol N(r,\infty;F\mid\geq2)\\&&+\ol N(r,\infty;G\mid\geq2)+\ol N_{F\geq3}(r,1;F\mid G\neq1)+\ol N_{G\geq3}(r,1;G\mid F\neq1)\\&&+\ol N_0(r,0;F^{\prime})+\ol N_0(r,0;G^{\prime})+S(r,F)+S(r,G).\eeas
	\end{lem}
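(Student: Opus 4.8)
The plan is to analyze the poles of $H$ purely locally and to show that every pole of $H$ is simple, so that $N(r,\infty;H)=\ol N(r,\infty;H)$ and it suffices to list the possible pole locations and count them with reduced counting functions. The starting observation is that
\beas H=\left(\frac{F^{\prime\prime}}{F^{\prime}}-\frac{2F^{\prime}}{F-1}\right)-\left(\frac{G^{\prime\prime}}{G^{\prime}}-\frac{2G^{\prime}}{G-1}\right)\eeas
is assembled from logarithmic derivatives, since $F^{\prime\prime}/F^{\prime}=(\log F^{\prime})^{\prime}$ and $2F^{\prime}/(F-1)=2(\log(F-1))^{\prime}$, and expressions of this type have at most simple poles with integer residues. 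Hence $H$ itself has at most simple poles, and the whole task reduces to locating the points where the contribution of the $F$-part and that of the $G$-part fail to cancel.

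First I would dispose of the candidate poles arising from a single function, say $F$. A short Laurent expansion shows the following. At a zero of $F$ of multiplicity $q$, the term $F^{\prime\prime}/F^{\prime}$ has residue $q-1$ while $2F^{\prime}/(F-1)$ is holomorphic (because $F-1\to-1$); thus only multiple zeros survive, contributing $\ol N(r,0;F\mid\geq2)$ and, symmetrically, $\ol N(r,0;G\mid\geq2)$. At a pole of $F$ of order $p$ the two terms combine to residue $p-1$, so a simple pole of $F$ produces no pole of $H$ and only $\ol N(r,\infty;F\mid\geq2)$ (resp. $\ol N(r,\infty;G\mid\geq2)$) remains. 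Finally, at a zero of $F^{\prime}$ that is neither a zero of $F$ nor a $1$-point of $F$, the term $F^{\prime\prime}/F^{\prime}$ alone has a pole, yielding $\ol N_0(r,0;F^{\prime})$ (resp. $\ol N_0(r,0;G^{\prime})$). All of these are simple poles of $H$, matching the corresponding terms of the asserted bound.

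The decisive step is the analysis at the $1$-points, where the hypothesis $E_{2)}(1,F)=E_{2)}(1,G)$ enters. At a $1$-point of $F$ of multiplicity $p$, a direct expansion gives $F^{\prime\prime}/F^{\prime}-2F^{\prime}/(F-1)=-(p+1)/(z-z_0)+O(1)$; in particular the principal part is $-2/(z-z_0)$ when $p=1$ and $-3/(z-z_0)$ when $p=2$. Since $E_{2)}(1,F)=E_{2)}(1,G)$ forces each $1$-point of $F$ of multiplicity $1$ (resp. $2$) to be a $1$-point of $G$ of the same multiplicity, at common simple and at common double $1$-points the principal parts of the $F$-part and the $G$-part coincide and cancel in $H$, leaving $H$ holomorphic there. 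The same equality shows that a $1$-point of $F$ of multiplicity $\geq3$ can never be a $1$-point of $G$ of multiplicity $\leq2$, so the only uncancelled $1$-points are those of multiplicity $\geq3$ in $F$ or in $G$; splitting them according to whether the other function omits the value $1$ or also attains it with multiplicity $\geq3$ distributes them among $\ol N_{F\geq3}(r,1;F\mid G\neq1)$, $\ol N_{G\geq3}(r,1;G\mid F\neq1)$, $\ol N_L(r,1;F)$ and $\ol N_L(r,1;G)$, the subcase of equal multiplicities $\geq3$ cancelling once more. Collecting the contributions of all cases and absorbing negligible terms into $S(r,F)+S(r,G)$ then produces the stated inequality.

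I expect the main obstacle to be exactly the double-$1$-point computation: checking that the multiplicity-$2$ principal part equals $-3/(z-z_0)$ for both $F$ and $G$ and therefore cancels is precisely what the weight-$2$ hypothesis $E_{2)}(1,F)=E_{2)}(1,G)$ is tailored to deliver, and getting this bookkeeping right—together with the correct assignment of the multiplicity-$\geq3$ $1$-points to the $\ol N_L$ and $\ol N_{\cdot\geq3}$ terms via the $E_{2)}$ equality—is the delicate part, while the remaining residue calculations are routine.
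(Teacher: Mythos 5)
Your proof is correct and follows essentially the same route as the paper's: a local analysis showing that every pole of $H$ is simple, that the $E_{2)}$-sharing forces the principal parts $-(p+1)/(z-z_0)$ to cancel at common $1$-points of equal multiplicity $\leq 2$, and an enumeration of the surviving pole locations matching the listed counting functions. Your version merely supplies the residue computations that the paper leaves as ``easily verified.''
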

\begin{proof}
It can be easily verified that all possible poles of $H$ occur at (i) multiple zeros of $F$ and $G$, (ii) multiple poles of $F$ and $G$, (iii) the common zeros of $F-1$ and $G-1$ whose multiplicities are different, (iii) those $1$-points of $F (G)$ which are not the $1$-points of $F(G)$, (iv) zeros of $F^{\prime}$ which are not the zeros of $F(F-1)$, (v) zeros of $G^{\prime}$ which are not zeros of $G(G-1)$. Since all the poles of $H$ are simple the lemma follows from above. This proves the lemma.
	\end{proof}
\begin{lem}\label{lem3.12}\cite{Banerjee & Mukherjee & 2007}
	If $f$, $g$ be share $``(1,1)"$ and $H\not\equiv 0 $, then
	\beas N(r,1; f\mid\leq 1)\leq N(r,0;H)+S(r,f)\leq N(r,\infty; H)+S(r,f)+S(r,g).\eeas
\end{lem}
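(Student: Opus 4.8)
The plan is to prove the two inequalities separately: the rightmost one by estimating the proximity function of $H$, and the leftmost one by a local analysis of $H$ at the simple common $1$-points of $f$ and $g$ together with the sharing hypothesis. Throughout I take $H$ to be the auxiliary function (\ref{e3.1}) formed from $f$ and $g$.

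First I would dispose of the second inequality $N(r,0;H)\le N(r,\infty;H)+S(r,f)+S(r,g)$. Observe that each bracketed expression in (\ref{e3.1}) is a logarithmic derivative, since $\frac{f''}{f'}-\frac{2f'}{f-1}=\frac{d}{dz}\log\frac{f'}{(f-1)^2}$, and likewise for $g$. Hence the lemma on the logarithmic derivative gives $m\left(r,\frac{f''}{f'}\right)=S(r,f)$ and $m\left(r,\frac{f'}{f-1}\right)=S(r,f)$, and similarly for $g$, so that $m(r,H)=S(r,f)+S(r,g)$. Combining this with the first fundamental theorem, $N(r,0;H)\le T(r,H)+O(1)=N(r,\infty;H)+m(r,H)+O(1)$, which yields the claimed bound; note $H\not\equiv0$ is what makes $T(r,H)$ meaningful here.

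For the first inequality, the heart of the matter is the behaviour of $H$ at a point $z_0$ that is a simple $1$-point of both $f$ and $g$. Writing $f-1=a_1(z-z_0)+a_2(z-z_0)^2+\cdots$ and $g-1=b_1(z-z_0)+b_2(z-z_0)^2+\cdots$ with $a_1,b_1\ne0$, a direct Laurent expansion gives
\[
\frac{f''}{f'}-\frac{2f'}{f-1}=-\frac{2}{z-z_0}+O(z-z_0),
\]
the constant term cancelling because both $\frac{f''}{f'}$ and $\frac{2f'}{f-1}$ contribute $2a_2/a_1$ at order $(z-z_0)^0$; the analogous expansion holds for $g$. Subtracting, the principal parts $-2/(z-z_0)$ and the constant terms both cancel, so $H$ is holomorphic at $z_0$ and $H(z_0)=0$. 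Thus every simple common $1$-point of $f$ and $g$ is a zero of $H$.

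Finally I would invoke the hypothesis that $f$ and $g$ share $``(1,1)"$: by the defining relations of weakly weighted sharing with weight $1$, up to a set counted by $S(r,f)$ every simple $1$-point of $f$ is a simple $1$-point of $g$ of the same multiplicity, so $N(r,1;f\mid\le1)$ agrees with the reduced counting function of the simple common $1$-points to within $S(r,f)$. Since each such point contributes at least $1$ to $N(r,0;H)$ by the previous paragraph, we get $N(r,1;f\mid\le1)\le N(r,0;H)+S(r,f)$, and chaining with the first step completes the proof. The main obstacle is the local computation in the third paragraph: one must push the expansion of each bracket past the simple pole to verify that the order-zero terms also agree at a simple common $1$-point, so that the cancellation leaves $H$ genuinely vanishing there rather than merely regular; the assumption $H\not\equiv0$ then guarantees these vanishings are recorded by the honest zero-counting function $N(r,0;H)$.
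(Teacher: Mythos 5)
Your argument is correct and is exactly the standard proof of this lemma: the paper itself states it without proof (it is quoted from Banerjee--Mukherjee \cite{Banerjee & Mukherjee & 2007}), and your two steps --- the Laurent expansion showing $H$ vanishes at every simple common $1$-point, combined with the weak weighted sharing identity $\ol N(r,1;f\mid\le 1)-\ol N^{E}_{1)}(r,1;f,g)=S(r,f)$, followed by $N(r,0;H)\le T(r,H)+O(1)$ and $m(r,H)=S(r,f)+S(r,g)$ via the lemma on the logarithmic derivative --- are precisely the argument in that reference.
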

\begin{lem}\label{lem3.13}\cite{Banerjee & Mukherjee & 2007}
If $f$, $g$ be two non-constant meromorphic functions such that $E_{1)}(1;f)=E_{1)}(1;g)$ and $H\not\equiv 0$, then
\beas N(r,1;f\mid\leq1)\leq N(r,0;H)\leq N(r,\infty; H)+S(r,f)+S(r,g).\eeas
\end{lem}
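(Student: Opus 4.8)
The plan is to combine a local expansion at the simple common $1$-points with the first main theorem and the logarithmic derivative lemma. Throughout, $H$ denotes the auxiliary function \eqref{e3.1} formed with $f,g$ in place of $F,G$.

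First I would prove the left-hand inequality $N(r,1;f\mid\leq1)\leq N(r,0;H)$ by a local computation. Let $z_0$ be a $1$-point of $f$ of multiplicity one; because $E_{1)}(1;f)=E_{1)}(1;g)$, the point $z_0$ is also a simple $1$-point of $g$. Writing $f-1=a(z-z_0)+a_2(z-z_0)^2+\cdots$ and $g-1=b(z-z_0)+b_2(z-z_0)^2+\cdots$ with $a,b\neq0$, expanding the Laurent series shows
\beas \frac{f^{\prime\prime}}{f^{\prime}}-\frac{2f^{\prime}}{f-1}=-\frac{2}{z-z_0}+O(z-z_0),\qquad \frac{g^{\prime\prime}}{g^{\prime}}-\frac{2g^{\prime}}{g-1}=-\frac{2}{z-z_0}+O(z-z_0),\eeas
so that on each side the residue is $-2$ \emph{and} the constant term vanishes. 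Subtracting, the simple poles cancel and $H$ is holomorphic at $z_0$ with $H(z_0)=0$. Hence every point counted (once) by $N(r,1;f\mid\leq1)$ is a zero of $H$, which gives the first inequality; here $H\not\equiv0$ ensures that $N(r,0;H)$ is an honest counting function.

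For the right-hand inequality I would apply the first fundamental theorem to write $N(r,0;H)\leq T(r,H)+O(1)=m(r,H)+N(r,\infty;H)+O(1)$, and then estimate the proximity term. Since $H$ is a $\mathbb{C}$-linear combination of the logarithmic derivatives $f^{\prime\prime}/f^{\prime}=(f^{\prime})^{\prime}/f^{\prime}$, $f^{\prime}/(f-1)$, $g^{\prime\prime}/g^{\prime}$ and $g^{\prime}/(g-1)$, the lemma on the logarithmic derivative yields $m(r,f^{\prime\prime}/f^{\prime})=S(r,f)$, $m(r,f^{\prime}/(f-1))=S(r,f)$, and the analogues for $g$; summing gives $m(r,H)=S(r,f)+S(r,g)$. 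Combining the two estimates produces $N(r,0;H)\leq N(r,\infty;H)+S(r,f)+S(r,g)$, as required.

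The main obstacle is the local computation in the first step: one must expand far enough to verify not only that the two simple poles at $z_0$ have equal residue $-2$ (so that they cancel in $H$), but also that the constant terms of the two bracketed expressions coincide, so that the difference actually \emph{vanishes} at $z_0$ rather than merely remaining finite. Once this cancellation is in place, the remainder is a routine combination of the first main theorem and the logarithmic derivative estimate.
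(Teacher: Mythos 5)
Your proof is correct, and it is essentially the standard argument: the paper itself does not prove this lemma but imports it verbatim from \cite{Banerjee & Mukherjee & 2007}, where the proof runs along exactly the lines you describe. Your local expansion is right — at a common simple $1$-point both bracketed expressions equal $-2/(z-z_0)+O(z-z_0)$ with vanishing constant term (the constant $2a_2/a$ from $f''/f'$ cancels against that from $2f'/(f-1)$), so $H$ vanishes there — and the second inequality follows, as you say, from the first main theorem together with $m(r,H)=S(r,f)+S(r,g)$ via the lemma on the logarithmic derivative applied to $f'$, $f-1$, $g'$, $g-1$.
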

\begin{lem}\label{lem3.14}\cite{Banerjee & Mukherjee & 2007}
	If $f$, $g$ be share $(1,1)^*$ and $H\not\equiv0$, then
	\beas N^E(r,1;f,g\mid\leq1)\leq N(r,0; H)\leq N(r,\infty;H)+S(r,f)+S(r, g).\eeas
\end{lem}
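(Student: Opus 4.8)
The plan is to prove the two inequalities separately. For the left-hand inequality $N^E(r,1;f,g\mid\leq1)\leq N(r,0;H)$ I would show, by a purely local argument, that every common $1$-point of $f$ and $g$ at which both functions have multiplicity exactly one is a zero of $H$. For the right-hand inequality $N(r,0;H)\leq N(r,\infty;H)+S(r,f)+S(r,g)$ I would combine the first fundamental theorem with the observation that $H$ is assembled solely from logarithmic derivatives, so that its proximity function is negligible.

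For the first inequality, let $z_0$ be a point counted by $N^E(r,1;f,g\mid\leq1)$, i.e.\ $f$ and $g$ both assume the value $1$ at $z_0$ with multiplicity one. Writing $f-1=a_1(z-z_0)+a_2(z-z_0)^2+\cdots$ and $g-1=b_1(z-z_0)+b_2(z-z_0)^2+\cdots$ with $a_1\neq0$ and $b_1\neq0$, a direct expansion gives
\[
\frac{f''}{f'}-\frac{2f'}{f-1}=-\frac{2}{z-z_0}+O(z-z_0),
\]
and the same expansion holds with $g$ in place of $f$. Subtracting, both the simple poles $-2/(z-z_0)$ and the (vanishing) constant terms cancel, so $H$ is holomorphic at $z_0$ with $H(z_0)=0$. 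Thus every such point is a zero of $H$, and since these are simple $1$-points each is counted once, giving $N^E(r,1;f,g\mid\leq1)\leq N(r,0;H)$.

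For the second inequality, the hypothesis $H\not\equiv0$ lets me apply the first fundamental theorem in the form $N(r,0;H)\leq T(r,H)+O(1)=m(r,H)+N(r,\infty;H)+O(1)$. Since each summand of $H$ has the shape $f''/f'=(\log f')'$ or $f'/(f-1)=(\log(f-1))'$ (and likewise for $g$), the lemma on the logarithmic derivative yields $m(r,H)\leq S(r,f)+S(r,g)$. Substituting gives $N(r,0;H)\leq N(r,\infty;H)+S(r,f)+S(r,g)$, and the lemma follows.

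The step I expect to be the crux is the local expansion in the second paragraph. It is not enough to note that the two bracketed expressions share the same principal part $-2/(z-z_0)$; one must verify that the constant term in each expansion is precisely $0$, so that the difference $H$ does not merely remain finite at $z_0$ but actually vanishes there. This vanishing is exactly what turns ``$H$ is analytic at $z_0$'' into ``$z_0$ is a zero of $H$'' and thereby legitimizes the counting-function estimate. The remaining manipulations are routine consequences of the first fundamental theorem and the logarithmic derivative lemma.
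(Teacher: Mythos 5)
Your proof is correct: the local expansion does show that at a common simple $1$-point both $\frac{f''}{f'}$ and $\frac{2f'}{f-1}$ have constant term $\frac{2a_2}{a_1}$, so each bracket equals $-\frac{2}{z-z_0}+O(z-z_0)$ and $H$ vanishes there, and the second inequality is the routine combination of the first fundamental theorem with the lemma on the logarithmic derivative. The paper itself imports this lemma from Banerjee--Mukherjee without proof, and your argument is exactly the standard one (note that the $(1,1)^*$ sharing hypothesis is never actually needed in it).
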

\begin{lem}\label{lem3.15}\cite{Banerjee & Mukherjee & 2007}
	If $f$, $g$ be share $(1,1)^*$ and $H\not\equiv0$, then
	\beas N(r,\infty; H)&\leq& \ol N(r,0;f\mid\geq2)+\ol N(r,0;g\mid\geq2)+\ol N(r,\infty;f\mid\geq2)+\ol N_*(r, 1; f,g)\\&&+\ol N(r,\infty;g\mid\geq 2)+\ol N_0(r,0;f^{\prime})+\ol N_0(r,0;g^{\prime})+S(r,f)+S(r,g),\eeas where $\ol N_0(r,0;f^{\prime})$ is the reduced counting function of those zeros of $f^{\prime}$	which are	not the zeros of $f(f-1)$ and $\ol N_0(r,0;g^{\prime})$ is similarly defined.
\end{lem}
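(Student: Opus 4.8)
The plan is to establish the bound by a purely local analysis of the poles of the auxiliary function $H$ from \eqref{e3.1}, applied with $F=f$ and $G=g$. The first and organizing observation I would record is that every pole of $H$ is \emph{simple}: each of the two brackets $\frac{f''}{f'}-\frac{2f'}{f-1}$ and $\frac{g''}{g'}-\frac{2g'}{g-1}$ is assembled from logarithmic-derivative-type terms, so a local Laurent expansion shows that near any point its principal part is at worst a simple pole. Consequently $H$ has only simple poles, so $N(r,\infty;H)=\ol N(r,\infty;H)$ and it suffices to count each pole once and to attribute it to one of the reduced counting functions on the right-hand side.

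Next I would run through the candidate locations of poles, expanding $f$ locally and arguing symmetrically for $g$. At a simple zero of $f$ that is neither a $1$-point nor a pole, both $\frac{f''}{f'}$ and $\frac{2f'}{f-1}$ are regular, so $H$ has no pole; a zero of multiplicity $p\geq2$ makes $\frac{f''}{f'}$ contribute a simple pole with residue $p-1\neq0$, giving the terms $\ol N(r,0;f\mid\geq2)$ and $\ol N(r,0;g\mid\geq2)$. At a pole of $f$ of order $p$ the residues of $\frac{f''}{f'}$ and $\frac{2f'}{f-1}$ are $-(p+1)$ and $-2p$, whose difference $p-1$ vanishes precisely when $p=1$; hence simple poles of $f$ cancel and only poles of order $\geq2$ survive, producing $\ol N(r,\infty;f\mid\geq2)$ and $\ol N(r,\infty;g\mid\geq2)$. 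Finally, at a zero of $f'$ that is not a zero of $f(f-1)$ the term $\frac{f''}{f'}$ has a simple pole while the rest is regular, yielding $\ol N_0(r,0;f')$ and $\ol N_0(r,0;g')$.

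The decisive case is the behaviour at the $1$-points, where the sharing hypothesis enters. At a $1$-point of $f$ of multiplicity $p$ the bracket $\frac{f''}{f'}-\frac{2f'}{f-1}$ has principal part $\frac{-(p+1)}{z-z_0}$, and at a $1$-point of $g$ of multiplicity $q$ the corresponding bracket has principal part $\frac{-(q+1)}{z-z_0}$. Since $f$ and $g$ share $(1,1)^*$, and in particular share $1$ IM, every $1$-point of $f$ is common; when the multiplicities agree ($p=q$) the two principal parts are identical and cancel in $H$. Here the observation that $H$ has only simple poles is what clinches the argument: a function whose sole possible singularity at $z_0$ is a simple pole of vanishing residue is regular there, so all common $1$-points of equal multiplicity --- in particular every common \emph{simple} $1$-point --- contribute nothing, and only the common $1$-points with $p\neq q$ leave a surviving simple pole, counted by $\ol N_*(r,1;f,g)=\ol N_L(r,1;f)+\ol N_L(r,1;g)$. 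I expect this cancellation bookkeeping to be the main point requiring care.

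Collecting the surviving contributions --- each entering as a reduced counting function because the poles of $H$ are simple --- gives
\beas N(r,\infty;H)&\leq& \ol N(r,0;f\mid\geq2)+\ol N(r,0;g\mid\geq2)+\ol N(r,\infty;f\mid\geq2)+\ol N_*(r,1;f,g)\\&&+\ol N(r,\infty;g\mid\geq2)+\ol N_0(r,0;f')+\ol N_0(r,0;g')+S(r,f)+S(r,g),\eeas
which is exactly the asserted inequality. The only non-local ingredient is that the negligible exceptional contributions are absorbed into $S(r,f)+S(r,g)$, which I would justify via the lemma on the logarithmic derivative.
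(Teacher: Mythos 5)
Your proposal is essentially correct and follows the standard argument: the paper itself states Lemma \ref{lem3.15} without proof (it is quoted from Banerjee--Mukherjee), but the analogous Lemma \ref{lem3.11} is proved in the paper by exactly your method, namely enumerating the possible pole locations of $H$ and using that all poles of $H$ are simple. Your local residue computations (residue $p-1$ at a zero of multiplicity $p$, $p-1$ at a pole of order $p$, $-(p+1)$ at a $1$-point of multiplicity $p$, hence cancellation at common $1$-points of equal multiplicity) are all correct and account for every term on the right-hand side.

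One small imprecision: under $(1,1)^*$ sharing the functions share $1$ only ``IM'' in the relaxed sense, so it is not true that \emph{every} $1$-point of $f$ is a $1$-point of $g$; there may be $S(r,f)+S(r,g)$ many $1$-points of one function that are not $1$-points of the other, and at such points $H$ genuinely has a pole. These must be absorbed into the error term, and the justification is the defining property of ``IM'' sharing (Definition of $``(a,k)"$ and $(a,k)^*$), not the lemma on the logarithmic derivative, which concerns proximity functions and plays no role in counting poles of $H$. With that correction the argument is complete.
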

\begin{lem}\label{lem3.16}\cite{Banerjee & Mukherjee & 2007}
	Let $E_{2)}(1;f)=E_{2)}(1;g)$. Then
	\beas &&\ol N_{f\geq3}(r,1;f\mid g\neq1)\\&\leq& \frac{1}{2}\ol N(r,0;f)+\frac{1}{2}\ol N(r,\infty;f)-\frac{1}{2}\sum_{p=3}^{\infty}\ol N\left(r,0;\frac{f^{\prime}}{F}\mid\geq p\right)-\frac{1}{2}\ol N_0^2(r,0;f^{\prime})+S(r),\eeas 
\end{lem}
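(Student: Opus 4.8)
The plan is to derive the inequality from the standard relationship between the $1$-points of $f$ of high multiplicity and the zeros of the logarithmic derivative $f'/f$, combined with the first fundamental theorem and the lemma on the logarithmic derivative. The starting observation is that at a $1$-point $z_0$ of $f$ of multiplicity $m$ the quotient $f'/f$ has a zero of order $m-1$; in particular, if $z_0$ is one of the points counted by $\ol N_{f\geq3}(r,1;f\mid g\neq1)$ then $m\geq3$, so this zero has order at least $2$. Under the hypothesis $E_{2)}(1;f)=E_{2)}(1;g)$ every such point is genuinely unshared, so these points form a subset of the $1$-points of $f$ of multiplicity $\geq3$.

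First I would classify the zeros of $f'/f$ into two families: \emph{Type I}, the $1$-points of $f$ of multiplicity $m\geq2$, at which $f'/f$ vanishes to order $\mu=m-1$; and \emph{Type II}, the remaining critical points of $f$ (zeros of $f'$ lying neither at zeros nor at $1$-points of $f$), at which $f'/f$ vanishes to some order $\mu\geq1$. This yields the exact decomposition $N(r,0;f'/f)=\sum_{\mathrm{I}}\mu+\sum_{\mathrm{II}}\mu$. The Type II points are precisely the zeros of $f'$ that are not zeros of $f(f-1)$, so they are the points recorded by $\ol N_0^2(r,0;f')=\sum_{\mathrm{II}}\min(\mu,2)$, while $\sum_{p=3}^{\infty}\ol N(r,0;f'/f\mid\geq p)=\sum_{\mathrm{I}}\max(0,\mu-2)+\sum_{\mathrm{II}}\max(0,\mu-2)$ records the excess multiplicity at both families.

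The analytic input is the bound $N(r,0;f'/f)\leq N(r,\infty;f'/f)+S(r,f)$, which follows from $N(r,0;f'/f)=N(r,\infty;f/f')\leq T(r,f/f')+O(1)=T(r,f'/f)+O(1)$ together with the estimate $m(r,f'/f)=S(r,f)$; since the poles of $f'/f$ are simple and occur only at the zeros and poles of $f$, this reads $\sum_{\mathrm{I}}\mu+\sum_{\mathrm{II}}\mu\leq \ol N(r,0;f)+\ol N(r,\infty;f)+S(r,f)$. Multiplying by $\tfrac12$ and subtracting the two correction sums, a pointwise check shows that every Type II point contributes $\mu-\max(0,\mu-2)-\min(\mu,2)=0$, so the Type II terms cancel completely, while each Type I point contributes $\tfrac12\min(\mu,2)=\tfrac12\min(m-1,2)$. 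Hence the right-hand side of the asserted inequality is bounded below by $\tfrac12\sum_{\mathrm{I}}\min(m-1,2)+S(r)$.

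Finally I would restrict the last sum to the $1$-points of $f$ of multiplicity $\geq3$ that are not $1$-points of $g$: for each of these $m\geq3$, whence $\min(m-1,2)=2$ and each contributes exactly $1$, giving $\tfrac12\sum_{\mathrm{I}}\min(m-1,2)\geq \ol N_{f\geq3}(r,1;f\mid g\neq1)$, which is the claim. The only delicate point is the bookkeeping in the third step: verifying that the two negative sums $\sum_{p\geq3}\ol N(r,0;f'/f\mid\geq p)$ and $\ol N_0^2(r,0;f')$ are exactly calibrated so that the auxiliary Type II critical points drop out and only the halved multiplicity $\min(m-1,2)$ of the relevant $1$-points survives. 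Once the two families of zeros of $f'/f$ are separated and their orders compared term by term, the estimate follows.
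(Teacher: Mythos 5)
The paper states this lemma without proof, importing it verbatim from Banerjee--Mukherjee \cite{Banerjee & Mukherjee & 2007}, so there is no internal proof to compare against; your argument is a correct, self-contained reconstruction along exactly the standard lines used in that source. The bookkeeping you flag as delicate does check out: writing $N(r,0;f'/f)=\sum\mu$ over the two families, the identity $\min(\mu,2)+\max(0,\mu-2)=\mu$ makes the Type II contributions cancel against $N_0^2(r,0;f')$ plus the tail $\sum_{p\geq3}\ol N(r,0;f'/f\mid\geq p)$, while each Type I point of multiplicity $m\geq3$ leaves exactly $\tfrac12\min(m-1,2)=1$, which dominates the reduced count $\ol N_{f\geq3}(r,1;f\mid g\neq1)$; combined with $N(r,0;f'/f)\leq N(r,\infty;f'/f)+S(r,f)=\ol N(r,0;f)+\ol N(r,\infty;f)+S(r,f)$ this gives the stated bound. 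You also correctly read the paper's $f'/F$ in the displayed sum as a typographical slip for $f'/f$. The only cosmetic remark is that the hypothesis $E_{2)}(1;f)=E_{2)}(1;g)$ is not actually needed for your estimate (you bound the larger quantity $\ol N(r,1;f\mid\geq3)$); it serves only to make the counting function $\ol N_{f\geq3}(r,1;f\mid g\neq1)$ well defined in the sense of the paper's Definition 1.6.
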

\begin{lem}\label{lem3.17}\cite{Zhang & Yang & 2007}
	Let $f$ be a non-constant meromorphic function, and $p$, $k$ be positive integers.
	Then \beas N_p(r,0;f^{(k)})\leq T(r,f^{(k)}´-T(r,f)+N_{p+k}(r,0;f)+S(r,f),\eeas
	\beas N_p(r,0;f^{(k)})\leq k\ol N(r,\infty;f)+N_{p+k}(r,0;f)+S(r,f).\eeas
\end{lem}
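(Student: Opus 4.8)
The plan is to derive both inequalities from the First Fundamental Theorem together with the lemma on the logarithmic derivative, obtaining the second inequality as a cheap consequence of the first. The only genuinely meromorphic-function input is the estimate $m(r,f^{(k)}/f)=S(r,f)$, so no finite-order hypothesis is needed; all error terms are the usual $S(r,f)$ outside an exceptional set of finite linear measure.

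The starting point I would establish is the untruncated estimate
\beas N(r,0;f^{(k)})\leq T(r,f^{(k)})-T(r,f)+N(r,0;f)+S(r,f).\eeas
To obtain it, factor $\frac{1}{f}=\frac{f^{(k)}}{f}\cdot\frac{1}{f^{(k)}}$ and apply the logarithmic derivative lemma to get $m(r,1/f)\leq m(r,1/f^{(k)})+m(r,f^{(k)}/f)=m(r,1/f^{(k)})+S(r,f)$. The First Fundamental Theorem applied to $f$ and to $f^{(k)}$ gives $m(r,1/f)=T(r,f)-N(r,0;f)+O(1)$ and $m(r,1/f^{(k)})=T(r,f^{(k)})-N(r,0;f^{(k)})+O(1)$; substituting these and rearranging produces the displayed estimate.

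The delicate step is to replace $N(r,0;f^{(k)})$ on the left by $N_p(r,0;f^{(k)})$ and $N(r,0;f)$ on the right by $N_{p+k}(r,0;f)$. Writing $N(r,0;f^{(k)})=N_p(r,0;f^{(k)})+E_{f^{(k)}}$ and $N(r,0;f)=N_{p+k}(r,0;f)+E_f$ for the respective truncation excesses, it suffices to prove $E_f\leq E_{f^{(k)}}$, since then the excess lost on the right is dominated by that lost on the left. This follows from a local count: a zero of $f$ of order $m$ with $m>p+k$ is a zero of $f^{(k)}$ of order $m-k$, contributing $m-p-k$ to $E_f$ and $(m-k)-p=m-p-k$ to $E_{f^{(k)}}$, whereas a zero of $f$ of order $m\leq p+k$ contributes $0$ to $E_f$ and any further zeros of $f^{(k)}$ only add to $E_{f^{(k)}}$. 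Hence $E_f\leq E_{f^{(k)}}$ and the first inequality follows.

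Finally, for the second inequality I would bound $T(r,f^{(k)})-T(r,f)$. Since every pole of $f$ of order $q$ is a pole of $f^{(k)}$ of order $q+k$, one has $N(r,\infty;f^{(k)})=N(r,\infty;f)+k\ol N(r,\infty;f)$, while $m(r,f^{(k)})\leq m(r,f)+m(r,f^{(k)}/f)=m(r,f)+S(r,f)$; adding gives $T(r,f^{(k)})-T(r,f)\leq k\ol N(r,\infty;f)+S(r,f)$, and inserting this into the first inequality yields the second. The main obstacle is precisely the truncation bookkeeping of the third paragraph: one must be careful that multiple zeros of $f$ of order $\leq k$, which can be ``accidental'' zeros of $f^{(k)}$, together with zeros of $f^{(k)}$ disjoint from those of $f$, enter only $E_{f^{(k)}}$ and never lower it below $E_f$, so that the comparison $E_f\leq E_{f^{(k)}}$ holds in full generality.
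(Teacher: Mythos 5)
Your proof is correct: the untruncated estimate via $m(r,1/f)\leq m(r,1/f^{(k)})+m(r,f^{(k)}/f)$ and the First Fundamental Theorem, the pointwise comparison of truncation excesses showing $E_f\leq E_{f^{(k)}}$, and the bound $T(r,f^{(k)})-T(r,f)\leq k\ol N(r,\infty;f)+S(r,f)$ are all sound. The paper itself gives no proof of this lemma (it is quoted from Zhang and Yang), and your argument is essentially the standard one found in that source, so there is nothing to flag beyond noting that the delicate truncation bookkeeping you single out is handled correctly.
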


\section{\textbf{Proofs of the theorems}}
\begin{proof}[Proof of Theorem \ref{t1}]
	Let $F=\displaystyle
	\frac{F_1^{(k)}}{\alpha(z)}$ and $G=\displaystyle
	\frac{G_1^{(k)}}{\alpha(z)}$, where $F_1=f^n(z)P(f(z))L_c(f)$ and $G_1=(g^n(z)P(g(z))L_c(g))$.
	Then $F$ and $G$ are two transcendental meromorphic functions that share ``$(1,2)$" except
	the zeros and poles of $\alpha(z)$. We consider the following two cases.\par 
\textbf{Case 1:} Suppose $H\not\equiv 0$. Since $F$ and $G$ share $``(1,2)"$, it follows that $F$ and $G$ share $(1,1)^*$. Keeping in view of Lemmas \ref{lem3.12} and \ref{lem3.15}, we see that \bea\label{e4.1} \ol N(r,1;F)&=&N(r,1;F\mid\leq 1)+\ol N(r,1;F\mid\geq2)\leq N(r,\infty;H)+\ol N(r,1;F\mid\geq2)\nonumber\\&\leq& \ol N(r,0;F\mid\geq2)+\ol N(r,0;G\mid\geq2)+\ol N_*(r, 1; F,G)+\ol N(r,1;F\mid\geq2)\nonumber\\&&\ol N_0(r,0;F^{\prime})+\ol N_0(r,0;G^{\prime})+S(r,F)+S(r,G).\eea 
Since $F$, $G$ share $``(1,2)"$, we must have $\ol N_{F\geq2}(r,1;F\mid G\neq1)=S(r,F)$ and $\ol N(r,1;F\mid\geq2,G\mid=1)=S(r,F)$. Therefore, keeping in view of the above observation and Lemma \ref{lem3.3b}, we get
\bea\label{e4.2} && \ol N_{0}(r,0;G^{\prime})+\ol N(r,1;F\mid\geq2)+\ol N_*(r,1;F,G)\nonumber\\&\leq&\ol N_{0}(r,0;G^{\prime})+\ol N(r,1;F\mid\geq3)+\ol N_{F\geq2}(r,1;F\mid G\neq 1)+\ol N(r,1;F\mid\geq 2,G\mid=1)\nonumber\\&&+\ol N(r,1;F\mid\geq2,G\mid\geq2)+S(r,G)\nonumber\\&\leq&\ol N_{0}(r,0;G^{\prime})+\ol N(r,1;G\mid\geq3)+\ol N(r,1;G\mid\geq2)+S(r,F)+S(r,G)\nonumber\\&\leq& N(r,0;G^{\prime}\mid G\neq 0)\leq \ol N(r,0;G)+S(r,G).\eea Hence using (\ref{e4.1}), (\ref{e4.2}), Lemmas \ref{lem3.3}, \ref{lem3.8} and \ref{lem3.17}, we get from second fundamental theorem that
 \bea\label{e4.3} && (n+m)T(r,f)\leq T(r,F_1)-N(r,0;L_c(f))+S(r,f)\nonumber\\&\leq& T(r,F)+N_{k+2}(r,0;F_1)-N_2(r,0;F)-N(r,0;L_c(f))+S(r,f) \nonumber\\&\leq& \ol N(r,0;F)+\ol N(r,1;F)+\ol N(r,\infty;F)+N_{k+2}(r,0;F_1)-N_2(r,0;F)\nonumber\\&&-N(r,0;L_c(f))-\ol N_0(r,0;F^{\prime})+S(r,f)\nonumber\\&\leq& \ol N(r,0;F)+\ol N(r,0;F\mid\geq2)+\ol N(r,0;G\mid\geq2)+\ol N(r,1;F\mid\geq2)+\ol N_L(r,1;F)\nonumber\\&&+\ol N_L(r,1;G)+\ol N_0(r,0;G^{\prime})+N_{k+2}(r,0;F_1)-N_2(r,0;F)-N(r,0;L_c(f))\nonumber\\&\leq& N_{k+2}(r,0;F_1)+ N_2(r,0;G)-N(r,0;L_c(f))+S(r,f)+S(r,g)\nonumber\\&\leq& N_{k+2}(r,0;f(z)^nP(f(z))L_c(f))+N_{k+2}(r,0;g(z)^nP(g(z))L_c(g))\nonumber\\&&-N(r,0;L_c(f))+S(r,f)+S(r,g)\nonumber\\&\leq& N_{k+2}(r,0;f^n(z))+N_{k+2}(r,0;P(f))+N_{k+2}(r.0;g^n(z))+N_{k+2}(r,0;P(g))\nonumber\\&&+N(r,0;L_c(g))+S(r,f)+S(r,g)\nonumber\\&\leq& (k+2)\ol N(r,0;f)+N(r,0;P(f))+(k+2)\ol N(r,0;g)+N(r,0;P(g))\nonumber\\&&+N(r,0;L_c(f))+S(r,f)+S(r,g)\nonumber\\&\leq& (k+m+1)(T(r,f)+T(r,g))+T(r,L_c(g))+S(r,f)+S(r,g).\nonumber\eea i.e.,
 \bea &&(n+m)T(r,f)
  \nonumber\\&\leq& (k+m+1)(T(r,f)+T(r,g))+m(r,L_c(g))+S(r,f)+S(r,g)\nonumber\\&\leq& (k+m+1)(T(r,f)+T(r,g))+m\left(r,\frac{L_c(g)}{g}\right)+m(r,g)+S(r,f)+S(r,g)\nonumber\\&\leq& (k+m+1)(T(r,f)+T(r,g))+T(r,g)+S(r,f)+S(r,g).\eea In a similarly, we get \bea\label{e4.4}(n+m)T(r,g)\leq (k+m+1)(T(r,f)+T(r,g))+T(r,f)+S(r,f)+S(r,g). \eea Combining (\ref{e4.3}) and (\ref{e4.4}), we get \beas (n-2k-m-5)(T(r,f)+T(r,g))\leq S(r,f)+S(r,g),\eeas which is a contradicts with  $n\geq 2k+m+6$.\par 
\textbf{ Case 2:} Suppose $H\equiv 0$. Then by integration we get \bea \label{e4.5}F=\frac{AG+B}{CG+D}, \eea where $A,\; B,\; C,\; D$ are complex constant satisfying $AD-BC\neq0$.\par 
\textbf{ Subcase 2.1:} Suppose $AC \neq 0$. Then $F-\frac{A}{C}=\frac{-(AD-BC)}{C(CG+D)}\neq 0.$ So $F$ omits the value $\frac{A}{C}.$\par Therefore, by Lemma \ref{lem3.8} and the Second Fundamental Theorem of Nevalinna, we get 
\beas && (n+m)T(r,f)\leq T(r,f(z)^nP(f(z))L_c(f))-N(r,0;L_c(f))+S(r,f)\\&\leq& T(r,F_1)-N(r,0;L_c(f))+S(r,f)\leq T(r,F)+N_{k+1}(r,0;F_1)-\ol N(r,0;F)\\&&-N(r,0;L_c(f))+S(r,f)\leq \ol N(r,0;F)+\ol N(r,\infty;F)+\ol N\left(r,\frac{A}{C};F\right)\\&&-N(r,0;L_c(f))+N_{k+1}(r,0;F_1)-N(r,0;F)+S(r,f)\\&\leq& N_{k+1}(r,0;f^nP(f)L_c(f))-N(r,0;L_c(f))+S(r,f)\leq (k+1)\ol N(r,0;f)\\&&+N(r,0;P(f))+S(r,f)\leq (k+m+1)T(r,f)+S(r,f),\eeas which is a contradicts with $n\geq 2k+m+6$.\par 
\textbf{ Subcase 2.2:} Suppose $AC=0$. Since $AD-BC\neq 0$, $A$ and $C$ both can not be simultaneously zero.\par 
\textbf{ Subcase 2.2.1:} Let $A\neq 0$ and $C=0$. Then (\ref{e4.5}) becomes $F=A_1G+B_1$, where $A_1=A/D$ and $B_1=B/D$. If $f$ has no 1-point, then by Lemma \ref{lem3.8} and the second fundamental theorem of Nevallina, we get 
\beas && (n+m)T(r,f)\leq T(r,f(z)^nP(f(z))L_c(f))-N(r,0;L_c(f))+S(r,f)\\&\leq& T(r,F_1)-N(r,0;L_c(f))+S(r,f)\leq T(r,F)+N_{k+1}(r,0;F_1)-\ol N(r,0;F)\\&&-N(r,0;L_c(f))+S(r,f)\leq \ol N(r,0;F)+\ol N(r,\infty;F)+\ol N\left(r,1;F\right)\\&&+N_{k+1}(r,0;F_1)-\ol N(r,0;F)-N(r,0;L_c(f))+S(r,f)\\&\leq& \ol N_{k+1}(r,0;f^nP(f)L_c(f))-N(r,0;L_c(f))\leq (k+m+1)T(r,f)+S(r,f),\eeas which is a contradiction since $n\geq 2k+m+6$.	Let $f$ has some 1-point. Then $A_1+B_1=1$. Therefore, $F=A_1G+1-A_1$. If $A_1\neq 1$, then using Lemmas \ref{lem3.8}, \ref{lem3.4}, \ref{lem3.17} and the second fundamental theorem, we get \beas && (n+m)T(r,g)\leq T(r,g^nP(g)L_c(g))-N(r,0;L_c(g))+S(r,g)\\&\leq& T(r,G_1)-N(r,0;L_c(g))+S(r,g)\\&\leq& T(r,G)+N_{k+1}(r,0;G_1)-\ol N(r,0;G)-N(r,0;L_c(g))+S(r,g)\\&\leq& \ol N(r,0;G)+\ol N(r,\infty;G)+\ol N \left(r,\frac{1-A_1}{A_1};G\right)+N_{k+1}(r,0;G_1)-\ol N(r,0;G)\\&&-N(r,0;L_c(g))+S(r,g)\\&\leq& N_{k+1}(r,0;G_1)+ \ol N(r,0;F)-N(r,0;L_c(g))+S(r,g)\\&\leq&  N_{k+1}(r,0;F_1)+N_{k+1}(r,0;G_1)-N(r,0;L_c(g))+S(r,g)\\&\leq&(k+m+1)T(r,f)+T(r,L_c(f))+(k+m+1)T(r,g)+S(r,f)+S(r,g)\\&\leq& (2k+2m+3)T(r,g)+S(r,g).\eeas i.e., 
\beas (n-2k-m-3)T(r,g)\leq S(r,g),\eeas
which is a contradiction since $n\geq 2k+m+6$. Hence $A_1=1$, and therefore we have $ F\equiv G.$ i.e.,\beas (f(z)^nP(f(z))L_c(f))^{(k)}\equiv (g(z)^nP(g(z))L_c(g))^{(k)}.\eeas Integrating $k$ times we get \bea \label{e4.6a}f^nP(f)L_c(f)\equiv g^nP(g)L_c(g)+p(z),\eea where $p(z)$ is a polynomial of degree at most $k-1$. Suppose $p(z) \not\equiv 0$. Then from (\ref{e4.6a}), we have
\bea \label{e4.7a}\frac{f^nP(f)L_c(f)}{p(z)}\equiv \frac{g^nP(g)L_c(g)}{p(z)}+1.\eea Now, using Lemmas \ref{lem3.3}, \ref{lem3.7} and the second fundamental theorem, we get \beas &&(n+m)T(r,f)\leq T(r,f^nP(f)L_c(f))-N(r,0;L_c(f))+S(r,f)\\&\leq& T(r,f^nP(f)L_c(f)/p(z))-N(r,0;L_c(f))+S(r,f)\\&\leq& \ol N\left(r,0;\frac{f^nP(f)L_c(f)}{p}\right)+\ol N\left(r,\infty;\frac{f^nP(f)L_c(f)}{p}\right)+\ol N\left(r,1;\frac{f^nP(f)L_c(f)}{p}\right)\\&&-N(r,0;L_c(f))+S(r,f)\\&\leq& \ol N(r,0;f)+\ol N(r,0;P(f))+\ol N\left(r,0;\frac{g^nP(g)L_c(g)}{p}\right)+S(r,f)\\&\leq& \ol N(r,0;f)+\ol N(r,0;P(f))+\ol N(r,0;g)+\ol N(r,0;P(g))+\ol N(r,0;L_c(g))+S(r,f)\\&\leq& (m+1)T(r,f)+(m+1)T(r,g)+T(r,L_c(g))+S(r,f)+S(r,g)\\&\leq&(m+1)T(r,f)+(m+1)T(r,g)+m\left(r,\frac{L_c(g)}{g}\right)+m(r,g)+S(r,f)+S(r,g)\\&\leq& (m+1)T(r,f)+(m+2)T(r,g)+S(r,f)+S(r,g).\eeas
Similarly, we get \beas (n+m)T(r,g)\leq(m+1)T(r,g)+(m+2)T(r,f)+S(r,f)+S(r,g).\eeas Combining the above two equations, we get 
\beas (n-m-3)(T(r,f)+T(r,g))\leq S(r,f)+S(r,g),\eeas which contradicts to the fact that $n\geq 2k+m+6$. Hence $p(z)\equiv 0$ and so from (\ref{e4.6a}), we obtain \beas f^nP(f)L_c(f)\equiv g^nP(g)L_c(g).\eeas
i.e. \bea\label{e4.6} f^n(a_mf^m+a_{m-1}f^{m-1}+\ldots+a_1f+a_0)(f(z+c)+c_0f(z))\nonumber\\\equiv g^n(a_mg^m+a_{m-1}g^{m-1}+\ldots+a_1g+a_0)(g(z+c)+c_0g(z)).\eea Let $h=f/g$. Then the above equation can be written as \beas && [a_m(h^{n+m}h(z+c)-1)g^m+a_{m-1}(h^{n+m-1}h(z+c)-1)g^{m-1}+\ldots\\&&+a_0(h^{n}h(z+c)-1)]g(z+c)\\&&\equiv -c_0[a_m(h^{n+m+1}-1)g^m+a_{m-1}(h^{n+m}-1)g^{m-1}+\ldots+a_0(h^{n+1}-1)]g(z).\eeas If $h$ is constant, then the above equation can be written as \beas [a_m(h^{n+m+1}-1)g^m+a_{m-1}(h^{n+m}-1)g^{m-1}+\ldots+a_0(h^{n+1}-1)]L_c(g)\equiv0.\eeas Since $L_c(g)\not\equiv0$, we must have \beas a_m(h^{n+m+1}-1)g^m+a_{m-1}(h^{n+m}-1)g^{m-1}+\ldots+a_0(h^{n+1}-1)=0.\eeas Then by a similar argument as in the Case 2 in the proof of Theorem 11 \cite{Xu & Liu & Cao & 2015}, we obtain $f=tg$, where $t$ a constant such that $t^d=1$, $d=\text{gcd}(\lambda_0, \lambda_1,\ldots,\lambda_m)$, where $\lambda_j$'s are defined by \beas \lambda_j=\begin{cases} n+1+j, \;\;\;\text{if}\; a_j\neq0\\ n+1+m, \; \text{if}\; a_j=0, \end{cases}j=0,1,\ldots,m.\eeas\par  If $h$ is not constant, then it follows that $f,\;g$ satisfy the algebraic equation $R(w_1,w_2)=0$, where \beas R(w_1,w_2)=w_1^{n}P(w_1)L_c(w_1)-w_2^{n}P(w_2)L_c(w_2).\eeas
\textbf{ Subcase 2.2.2:} Let $A=0$ and $C\neq0$. Then (\ref{e4.5}) becomes \bea\label{e4.7} F=\frac{1}{A_2G+B_2},\eea where $A_2=C/B$ and $B_2=D/B$. If $F$ has no $1$-point, then by a similar argument as done in \textbf{Subcase 2.2.1}, we can get a contradiction. Let $F$ has some $1$-point. Then $A_2+B_2=1$. If $A_2\neq 1$, then (\ref{e4.7}) can be written as \bea \label{e4.8}F=\frac{1}{A_2G+1-A_2}.\eea Since $F$ is entire and $A_2\neq0$, $G$ omits the value $(1-A_2)/A_2$. Therefore, by Lemma \ref{lem3.8} and the second fundamental theorem, we get \beas && (n+m)T(r,g)\leq T(r,g^nP(g)L_c(g))-N(r,0;L_c(g))+S(r,g)\\&\leq&T(r,G_1)-N(r,0;L_c(g))+S(r,g)\\&\leq& T(r,G)+N_{k+1}(r,0;G_1)-\ol N(r,0;G)-N(r,0;L_c(g))+S(r,g)\\&\leq& \ol N(r,0;G)+\ol N(r,\infty;G)+\ol N \left(r,\frac{1-A_2}{A_2};G\right)+N_{k+1}(r,0;G_1)-\ol N(r,0;G)\\&&-N(r,0;\Delta_cg)+S(r,g).\eeas
i.e., \beas && (n+m)T(r,g)\leq N_{k+1}(r,0;g^nP(g)L_c(g))-N(r,0;L_c(g))+S(r,g)\\&\leq&N_{k+1}(r,0;g^n)+N_{k+1}(r,0;P(g))+S(r,g)\\&\leq&(k+1)\ol N(r,0;g)+N(r,0;P(g))+S(r,g)\leq (k+m+1)T(r,g)+S(r,g),\eeas which is a contradiction since $n\geq2k+m+6$. Hence $A_2=1$. So, from (\ref{e4.8}), we get $FG\equiv 1$. i.e., \bea\label{e4.9} (f(z)^nP(f(z))L_c(f))^{(k)}(g(z)^nP(g(z))L_c(g))^{(k)}\equiv\alpha^2(z).\eea Let $u_1, u_2,\ldots,u_t$, $1\leq t\leq m$ be the distinct zeros of $P(z)$. Since $m\geq k+1$, $a_0\neq0$ and $f$ is entire, it is easily seen from (\ref{e4.9}) that $f$ has atleast two finite Picard exceptional values, which is not possible. Hence the proof is complete. \end{proof}
\begin{proof}[Proof of Theorem \ref{t2}]
Let $F$ and $G$ be defined as in Theorem \ref{t1}.
Then $F$ and $G$ are two transcendental meromorphic functions that share $(1,2)^*$ except the zeros and poles of $\alpha(z)$. We consider the following two cases.\par 
\textbf{Case 1:} Suppose $H\not\equiv 0$. Since $F$ and $G$ share $(1,2)^*$, it follows that $F$ and $G$ share $(1,1)^*$. Also we note that $\ol N(r,1;F\mid=1, G\mid=0)=S(r,F)+S(r,G)$. Keeping in view of Lemmas \ref{lem3.14} and \ref{lem3.15}, we see that \bea\label{e4.10} &&\ol N(r,1;F)=N(r,1;F\mid\leq 1)+\ol N(r,1;F\mid\geq2)\nonumber\\&\leq&\ol N(r,1;F\mid=1, G\mid=0)+\ol N^E(r,1;F,G\mid\leq 1)+\ol N(r,1;F\mid\geq2)\nonumber\\&\leq& N(r,\infty;H)+\ol N(r,1;F\mid\geq2)+S(r,F)+S(r,G)\nonumber\\&\leq& \ol N(r,0;F\mid\geq2)+\ol N(r,0;G\mid\geq2)+\ol N_*(r, 1; F,G)+\ol N(r,1;F\mid\geq2)\nonumber\\&&\ol N_0(r,0;F^{\prime})+\ol N_0(r,0;G^{\prime})+S(r,F)+S(r,G).\eea 
Since $F$, $G$ share $(1,2)^*$, we must have $\ol N_{F\geq2}(r,1;F\mid G\neq1)=S(r,F)+S(r,G)$, $\ol N(r,1;F\mid=2,G\mid=1)=S(r,F)+S(r,G)$. Therefore, using Lemma \ref{lem3.10}, we get
\bea \label{e4.11}&&\ol N(r,1;F\mid\geq2)\nonumber\\&\leq& \ol N_{F\geq2}(r,1;F\mid G\neq1)+\ol N(r,1;F\mid\geq2, G\mid=1)+\ol N(r,1;F\mid\geq2,G\mid\geq2)\nonumber\\&\leq&\ol N_{F\geq2}(r,1;F\mid G\neq1)+\ol N(r,1;F\mid=2,G\mid=1)+\ol N_{F\geq3}(r,1;G\mid=1)\nonumber\\&&+\ol N(r,1;G\mid\geq2)+S(r,F)+S(r,G)\nonumber\\&\leq& \ol N(r,0;F)+\ol N(r,1;G\mid\geq2)+S(r,F)+S(r,G).\eea
Again using (\ref{e4.11}) and Lemma \ref{lem3.3b}, we get \bea\label{e4.12} && \ol N_{0}(r,0;G^{\prime})+\ol N(r,1;F\mid\geq2)+\ol N_*(r,1;F,G)\nonumber\\&\leq&\ol N_{0}(r,0;G^{\prime})+\ol N(r,1;G\mid\geq2)+\ol N(r,1;G\mid\geq3)+\ol N(r,0;F)+S(r,G)\nonumber\\&\leq& \ol N_0(r,0;G^{\prime})+N(r,1;G)-\ol N(r,1;G)+\ol N(r,0;F)+S(r,F)+S(r,G)\nonumber\\&\leq& N(r,0;G^{\prime}\mid G\neq 0)+\ol N(r,0;F)+S(r,F)+S(r,G)\nonumber\\&\leq& \ol N(r,0;F)+\ol N(r,0;G)+S(r,F)+S(r,G).\eea
Hence using (\ref{e4.10}), (\ref{e4.12}), Lemmas \ref{lem3.3} and \ref{lem3.8}, Second fundamental theorem of Nevalinna, we get \bea  && (n+m)T(r,f)\leq T(r,f^nP(f)L_c(f))-N(r,0;L_c(f))+S(r,f)\nonumber\\&\leq& T(r,F)+N_{k+2}(r,0;F_1)-
N_2(r,0;F)-N(r,0;L_c(f))+S(r,f).\nonumber\eea
i.e., \bea \label{e4.13} && (n+m)T(r,f)\nonumber\leq \ol N(r,0;F)+\ol N(r,\infty;F)+\ol N(r,1;F)-\ol N(r,0;F^{\prime})\nonumber\\&&+N_{k+2}(r,0;F_1)\nonumber-N_2(r,0;F)-N(r,0;L_c(f))+S(r,f)
\nonumber\\&\leq& N_2(r,0;F)+N_2(r,0;G)+\ol N(r,0;F)+N_{k+2}(r,0;F_1)-N_2(r,0;F)\nonumber\\&&-N(r,0;L_c(f))+S(r,f)+S(r,g)\nonumber\\&\leq& N_{k+2}(r,0;F_1)+N_{k+2}(r,0;G_1)+N_{k+1}(r,0;F_1)-N(r,0;L_c(f))\nonumber\\&&+S(r,f)+S(r,g)\leq (k+2)(\ol N(r,0;f)+\ol N(r,0;g))+N(r,0;P(f))\nonumber\\&&+N(r,0;P(g))+N(r,0;P(f))+N(r,0;L_c(g))+(k+1)\ol N(r,0;f)\nonumber\\&&+N(r,0;L_c(f))+S(r,f)+S(r,g)\leq(k+m+2)(T(r,f)+T(r,g))\nonumber\\&&+(k+m+1)T(r,f)+T(r,L_c(f))+T(r,L_c(g))+S(r,f)+S(r,g)\nonumber\\&\leq& (k+m+2)(T(r,f)+T(r,g))+(k+m+1)T(r,f)+m\left(r,\frac{L_c(f)}{f}\right)\nonumber\\&&+m(r,f)+m\left(r,\frac{L_c(g)}{g}\right)+m(r,g)+S(r,f)+S(r,g)\nonumber\\&\leq&(k+m+3)(T(r,f)+T(r,g))+(k+m+1)T(r,f)\nonumber\\&&+S(r,f)+S(r,g).\eea In a similar manner, we obtain \bea\label{e4.14} (n+m)T(r,g)&\leq& (k+m+3)(T(r,f)+T(r,g))+(k+m+1)T(r,g)\nonumber\\&&+S(r,f)+S(r,g). \eea Combining (\ref{e4.13}) and (\ref{e4.14}), we get \beas (n-3k-2m+7)(T(r,f)+T(r,g))\leq S(r,f)+S(r,g),\eeas which is a contradicts to the fact that $n\geq 3k+2m+8$.\par \textbf{Case 2:} Let $H\equiv 0$. This case can be carried out similarly as done in  case 2 of the proof of Theorem \ref{t1}. So, we omit the details. This proves Theorem \ref{t2}.
\end{proof}
\begin{proof}[Proof of Theorem \ref{t3}]
Let $F$ and $G$ be defined as in Theorem \ref{t1}. Then $F$ and $G$ are transcendental meromorphic functions such that $E_{2)}(1,F)=E_{2)}(1,G)$ except the zeros and poles of $\alpha(z)$. Let us discuss the following two cases.\par 
\textbf{Case 1:} Let $H\not\equiv 0$. Since $E_{2)}(1,F)=E_{2)}(1,G)$, it follows that $E_{1)}(1,F)=E_{1)}(1,G)$. Keeping in view of Lemmas \ref{lem3.11}, \ref{lem3.13} and \ref{lem3.16}, we see that \bea && \ol N(r,1;F)=N(r,1;F\mid\leq 1)+\ol N(r,1;F\mid\geq2)\nonumber\\&\leq& N(r,H)+\ol N(r,1;F\mid=2)+\ol N_{F\geq3}(r,1;F\mid G\neq1)+\ol N(r,1;F\mid\geq3,G\mid\geq3)\nonumber\\&\leq& N(r,\infty;H)+\ol N(r,1;G\mid=2)+\ol N(r,1;G\mid\geq3)+\ol N_{F\geq3}(r,1;F\mid G\neq1)\nonumber\\&&+S(r,F)+S(r,G)\nonumber\\&\leq& N(r,\infty;H)+\ol N(r,1;G\mid\geq2)+\ol N_{F\geq3}(r,1;F\mid G\neq1)+S(r,F)+S(r,G)\nonumber\\&\leq&\ol N(r,0;F\mid\geq2)+\ol N(r,0;G\mid\geq2)+\ol N_L(r, 1; F)+\ol N_L(r,1;G)+\ol N(r,1;G\mid\geq2)\nonumber\\&&+2\ol N_{F\geq3}(r,1;F\mid G\neq1)+\ol N_{G\geq3}(r,1;G\mid F\neq1)+\ol N_0(r,0;F^{\prime})+\ol N_0(r,0;G^{\prime})\nonumber\\&&+S(r,F)+S(r,G).\nonumber\eea i.e., \bea\label{e4.15} && \ol N(r,1;F)
\nonumber\\&\leq&\ol N(r,0;F\mid\geq2)+\ol N(r,0;G\mid\geq2)+\ol N_L(r, 1; F)+\ol N_L(r,1;G)+\ol N(r,1;G\mid\geq2)\nonumber\\&&+2\ol N_{F\geq3}(r,1;F\mid G\neq1)+\ol N_{G\geq3}(r,1;G\mid F\neq1)+\ol N_0(r,0;F^{\prime})+\ol N_0(r,0;G^{\prime})\nonumber\\&&+S(r,F)+S(r,G)\nonumber\\&\leq& \ol N(r,0;F\mid\geq2)+\ol N(r,0;G\mid\geq2)+\ol N_L(r, 1; F)+\ol N_L(r,1;G)+\ol N(r,1;G\mid\geq2)\nonumber\\&+&\ol N(r,0;F)+\frac{1}{2}\ol N(r,0;G)+\ol N_0(r,0;F^{\prime})+\ol N_0(r,0;G^{\prime})+S(r,F)+S(r,G).\eea 
   Now using Lemma \ref{lem3.3b}, we get
	\bea\label{e4.16} && \ol N_{0}(r,0;G^{\prime})+\ol N(r,1;G\mid\geq2)+\ol N_L(r,1;F)+\ol N_L(r,1;G)\nonumber\\&\leq&\ol N_{0}(r,0;G^{\prime})+\ol N(r,1;G\mid\geq2)+\ol N(r,1;G\mid\geq3)+S(r,G)\nonumber\\&\leq& \ol N_0(r,0;G^{\prime})+N(r,1;G)-\ol N(r,1;G)+S(r,F)+S(r,G)\nonumber\\&\leq& N(r,0;G^{\prime}\mid G\neq 0)\leq \ol N(r,0;G)+S(r,F)+S(r,G).\eea
	Therefore, using (\ref{e4.15}), (\ref{e4.16}), Lemmas \ref{lem3.3} and \ref{lem3.8}, we get from Second Fundamental Theorem that
	\bea && \label{e4.19}(n+m)T(r,f)\leq T(r,f^nP(f)L_c(f))-N(r,0;L_c(f))+S(r,f)\nonumber\\&\leq& T(r,F)+N_{k+2}(r,0;F_1)-
	N_2(r,0;F)-N(r,0;L_c(f))+S(r,f)\nonumber\\&\leq& \ol N(r,0;F)+\ol N(r,\infty;F)+\ol N(r,1;F)-\ol N(r,0;F^{\prime})+N_{k+2}(r,0;F_1)\nonumber\\&&-
	N_2(r,0;F)-N(r,0;L_c(f))+S(r,f)\nonumber\\&\leq& N_2(r,0;F)+N_2(r,0;G)+\ol N(r,0;F)+\frac{1}{2}\ol N(r,0;G)+N_{k+2}(r,0;F_1)\nonumber\\&&-N_2(r,0;F)-N(r,0;L_c(f))+S(r,f)+S(r,g)\nonumber\\&\leq&N_{k+2}(r,0;F_1)+N_{k+2}(r,0;G_1)+N_{k+1}(r,0;F_1)+\frac{1}{2}N_{k+1}(r,0;G_1)\nonumber\\&&-N(r,0;L_c(f))+S(r,f)+S(r,g)\nonumber\\&\leq& (k+2)(\ol N(r,0;f)+\ol N(r,0;g))+2N(r,0;P(f))+N(r,0;P(g))\nonumber\\&&+N(r,0;L_c(g))+(k+1)\ol N(r,0;f)+N(r,0;L_c(f))+\frac{1}{2}(k+1)\ol N(r,0;g)\nonumber\\&&+\frac{1}{2}N(r,0;P(g))+\frac{1}{2}N(r,0;L_c(g))+S(r,f)+S(r,g)\nonumber\\&\leq& (k+m+2)(T(r,f)+T(r,g))+(k+m+1)T(r,f)+T(r,L_c(f))\nonumber\\&&+T(r,L_c(g))+\frac{1}{2}(k+m+1)T(r,g)+\frac{1}{2}T(r,L_c(g))+S(r,f)+S(r,g)\nonumber\\&\leq& (k+m+2)(T(r,f)+T(r,g))+(k+m+1)T(r,f)+m\left(r,\frac{L_c(f)}{f}\right)\nonumber\\&&+m(r,f)+m\left(r,\frac{L_c(g)}{g}\right)+m(r,g)+\frac{1}{2}(k+m+1)T(r,g)+\frac{1}{2}m\left(r,\frac{L_c(g)}{g}\right)\nonumber\\&&+\frac{1}{2}m(r,g)+S(r,f)+S(r,g)\nonumber\\&\leq&(k+m+3)(T(r,f)+T(r,g))+(k+m+1)T(r,f)+\frac{1}{2}(k+m+2)T(r,g)\nonumber\\&&+S(r,f)+S(r,g).\eea Similarly, we obtain \bea\label{e4.20} (n+m)T(r,g)&\leq&(k+m+3)(T(r,f)+T(r,g))+(k+m+1)T(r,g)\nonumber\\&&+\frac{1}{2}(k+m+2)T(r,f)+S(r,f)+S(r,g).\eea Combining (\ref{e4.19}) and (\ref{e4.20}), we get
	\beas \left(n-\frac{7k}{2}-\frac{5m}{2}-8\right)(T(r,f)+T(r,g))\leq S(r,f)+S(r,g),\eeas which is not possible since $n\geq \displaystyle\frac{7k}{2}+\displaystyle\frac{5m}{2}+9$.\par 
	\textbf{Case 2:} Let $H\equiv 0$. This case can be carried out similarly as done in  case 2 of the proof of Theorem \ref{t1}. So, we omit the details. This proves Theorem \ref{t3}.
	\end{proof}

\end{document}